\theoremstyle{plain}
\newtheorem{thm}{Theorem}[section]
\newtheorem{lemma}[thm]{Lemma}
\newtheorem{corollary}[thm]{Corollary}
\newtheorem{definition}[thm]{Definition}
\newtheorem{remark}[thm]{Remark}
\newtheorem{example}[thm]{Example}
\numberwithin{equation}{section}
\numberwithin{algorithm}{section}
\def\bv{\mathbf{v}}
\def\bw{\textbf{w}}
\def\bx{\textbf{x}}
\def\bn{\textbf{n}}
\journal{}
\begin{document}

\begin{frontmatter}

\title{Optimal Control of  Convection-Cooling  and Numerical Implementation}

\author{Cuiyu He\fnref{label1}}
\ead{cuiyu.he@uga.edu}
\author{Weiwei Hu\corref{cor1}\fnref{label1}}
\ead{Weiwei.Hu@uga.edu}
\author{Lin Mu\fnref{label1}}
\ead{linmu@uga.edu}
\fntext[label1]{Department of Mathematics, University of Georgia, Athens, GA 30602}
\cortext[cor1]{Corresponding author}

\begin{abstract}
This paper is concerned with   the problem of enhancing
convection-cooling  via  active control of the incompressible 
 velocity field, described by a stationary  diffusion-convection  model.  This essentially leads to a bilinear optimal control problem. 
A  rigorous proof of the existence of an optimal control is presented and  the first order optimality  conditions are derived  for solving  the control  using a variational inequality.  Moreover,  the second order sufficient conditions  are established  to characterize  the  local minimizer.  Finally,  numerical experiments are conducted  utilizing  finite elements methods together with  nonlinear iterative schemes, to  demonstrate   and validate the effectiveness  of our control design.
\end{abstract}

\begin{keyword}
convection-cooling, bilinear control, optimality conditions, variational inequality, numerical experiments
\end{keyword}

\end{frontmatter}




\section{Introduction}

Convection-cooling is the mechanism where heat is transferred  from the hot object into the surrounding air or liquid. 
There are several factors {determining} the effectiveness of cooling, including 
 temperature difference between the surrounding and the hot object, 
  viscosity of the fluid (air or liquid),
  and   ability of the fluid to move in response to the density difference, etc.
There are two types of convectional cooling, namely the natural convection cooling and the forced air convection cooling (cf.~\cite{bergman2011introduction,  bejan2013convection, kreith2012principles}).
In the natural cooling, the air surrounding the object transfers the heat away from the object and does not use any fans or blowers. 
In contrast, forced air convection cooling  is used in designs where the enclosures or environment do not offer an effective natural cooling performance 
and areas where natural cooling is not effective.
The forced air convection cooling is the most effective cooling method 
in many industrial applications. It can be designed to provide the required cooling performance while increasing  the efficiency of the related components.

The current work utilizes  an optimal control approach for the forced air  convection-cooling. To be more precise, consider   a stationary  diffusion-convection  model for a cooling application  in an open bounded and connected domain $\Omega\subset \mathbb{R}^d, d=2,3$,  with a Lipschitz   boundary  $\Gamma$. The  velocity field is assumed to be divergence-free. The system of equations reads
  \begin{align}
	-\kappa \Delta T+\bv \cdot \nabla T&=f \quad \text{in}\quad \Omega\label{sta_T}\\
	\nabla \cdot \bv&=0, \label{sta_v}
\end{align}
with Dirichlet boundary condition for temperature and no-slip boundary condition for velocity
 \begin{align}
 T|_{\Gamma}&=0,\quad   \bv|_{\Gamma}=0, \label{sta_BC}
\end{align}	
	where $T$ is the  temperature,  $\kappa>0$  is  the thermal diffusivity,  $\bv$ is the velocity, and $f\in L^\infty(\Omega)$ is the external heat source distribution. 
The Dirichlet  boundary condition is  corresponding to a given fixed surface temperature, for example, when the surface is in contact with a melting solid or a boiling liquid. Although Neumann type of boundary conditions  are often used  in the diffusion-convection problems for describing heat flux at the boundaries,  
the Dirichlet boundary condition is also  commonly employed   in the study of natural convection and heat transfer in enclosures, which may be 
 simultaneously heated from below and cooled from above (cf.~\cite{calcagni2005natural, corcione2003effects, dalal2006natural, sezai2000natural, temam2012infinite}).
 Linear controls, either internal (distributed) or boundary controls,  of   the temperature and the corresponding numerical schemes   have been well studied for diffusion-convection equations (cf.~\cite{burns2014numerical,burns2015full,  chen2019class, chen2019hdg,chen2018hdg,   gong2018new, hu2018superconvergent, quarteroni2005optimal, zhang2017optimal}).   The objective of this work is aimed at enhancing  convection-cooling via active control of the 
flow velocity. For example,  in high power applications, a cooling fan is used  to blow and direct air towards the electronic components with or without heat sinks.  Most power supply units have built-in fans that provide the required forced-air convectional cooling.  Mathematically, our control design gives rise  to a bilinear optimal control problem.

 Optimal control   for enhancing  heat transfer  and fluid  mixing or optic flow control  via flow advection,  governed by nonstationary diffusion-convection, has   been discussed in  (cf.~\cite{barbu2016optimal, hu2017enhancement, hu2018optimal,  liu2008mixing}).   However,  to solve the resulting nonlinear optimality system, one has to solve the state equations forward in time, coupled with  the adjoint system backward in time 
together with a nonlinear optimality condition.  This leads to extremely high computational costs and  intractable problems. Some preliminary numerical results were obtained in \cite{liu2008mixing} with  simplified conditions. As a first step to tackle such a complex system, our current work will focus  on the stationary  case and  present  a rigorous theoretical and numerical study of the optimal control design.

Now denote the spatial average of temperature by 
\[\langle T\rangle=\frac{1}{|\Omega|} \int_{\Omega} T\, dx.\]
The objective is  to minimize the variance of the temperature with optimal control cost, that is,
 \begin{align*}
 J({\bv})&=\frac{1}{2}\|T-\langle T\rangle\|^2_{L^2}
 +\frac{\gamma}{2}\|\bv\|^2_{U_{\text{ad}}}, \qquad (P)
 \end{align*}
subject to \eqref{sta_T}--\eqref{sta_BC}, where $\gamma>0$ is the   control weight parameter and $U_{\text{ad}}$  stands for the set of admissible control.  The choice of the  set of admissible control  is usually dependent on  the physical properties and  the need to establish the existence of an optimal control. Due to the advection term $\bv\cdot \nabla T$, the control map  $\bv\mapsto T$ is bilinear and hence  problem $(P)$ is non-convex.  Establishing the  existence of an optimal velocity field  will involve  a  compactness argument associated with the control map. Moreover,
 in order to reduce the effects of rotation on the flow and the shear stress at the boundary in the cooling process,  we consider to minimize the magnitude of the strain tensor  (cf.~\cite{foias2001navier}), which is equivalent to minimize   $\|\nabla \bv\|_{L^2}$. To this end, we set
  $$
  U_{\text{ad}}=\{\bv\in H^1_0(\Omega)\colon \nabla \cdot  \bv=0\}
  $$ equipped with $H^1$-norm
\[\|\bv\|_{U_{\text{ad}}}=\|\bv\|_{H^1}.\]

 The remainder of this paper is organized as follows. Section~\ref{existence} focuses on the existence of an optimal solution to problem $(P)$. {Section}~\ref{opt_cond} presents the first and second order optimality  conditions for solving and charactering  the optimal solution by using a variational inequality (cf.~\cite{ lions1971optimal}). Moreover, it can be shown that there exists a strict local minimizer  if the control weight $\gamma$ is large enough. Section~\ref{Sect:Implementation}  discusses  the numerical implementation of our control design, where the finite element formulation and nonlinear iterative solvers are used to construct our numerical schemes. In particular,  the relation regarding the solutions of the optimality system associated with different values in $\kappa$ and $\gamma$ is established. This result provides a practical guidance for choosing  these parameters in our  numerical implementation.   In Section~\ref{Sect:Num}, several numerical experiments  are conducted  to demonstrate the effectiveness of our control design for convection-cooling. {Lastly, this paper concludes with potential problems for future work in Section \ref{Sect:Con}.}

In the sequel, the symbol $C$ denotes a generic positive constant, which is allowed to depend on the domain as well as on indicated {parameters without ambiguous.}

\section{ Existence of an Optimal Solution }\label{existence}

As a starting point to analyze  problem $(P)$, 
we first recall   some basic  properties of the state equations \eqref{sta_T}--\eqref{sta_BC}. The following lemmas  will be often used  in this paper. 

\begin{lemma}\label{lem0}
Let $\mathbf{w}\in ( H^1(\Omega))^d, d=2,3$, and $\phi,\psi \in H^1(\Omega)$.
Then  we have 
\begin{align}
\left|\int_{\Omega} \mathbf{w} \cdot \nabla \phi \psi\,dx\right|\leq \|\mathbf{w}\|_{L^4}\|\nabla \phi\|_{L^2}\|\psi\|_{L^4}
\leq C\|\nabla \mathbf{w}\|_{L^2}\|\nabla \phi\|_{L^2}\|\nabla \psi\|_{L^2}.  \label{1EST_tri}
\end{align}
Moreover, if $\nabla \cdot \mathbf{w}=0$ and $\mathbf{w}|_{\Gamma}=0$, then 
\begin{align}
\int_{\Omega} \mathbf{w} \cdot \nabla \phi \psi\,dx=-\int_{\Omega} \phi  \mathbf{w} \cdot \nabla\psi\,dx.
\label{2EST_tri}
\end{align}
\end{lemma}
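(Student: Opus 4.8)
The plan is to establish the two assertions separately, relying only on Hölder's inequality, the Sobolev embedding, and the divergence theorem. For the chain \eqref{1EST_tri}, I would first apply the generalized Hölder inequality with the triple of conjugate exponents $(4,2,4)$, observing that $\tfrac14+\tfrac12+\tfrac14=1$, to get
\[
\left|\int_{\Omega}\mathbf{w}\cdot\nabla\phi\,\psi\,dx\right|\le\|\mathbf{w}\|_{L^4}\|\nabla\phi\|_{L^2}\|\psi\|_{L^4}.
\]
For the second inequality I would invoke the continuous Sobolev embedding $H^1(\Omega)\hookrightarrow L^4(\Omega)$, valid in both $d=2$ and $d=3$ on a bounded Lipschitz domain, and then Poincar\'e's inequality to bound $\|\mathbf{w}\|_{H^1}$ and $\|\psi\|_{H^1}$ by $\|\nabla\mathbf{w}\|_{L^2}$ and $\|\nabla\psi\|_{L^2}$ respectively, which yields the stated constant $C=C(\Omega,d)$. (The Poincar\'e step is where the homogeneous boundary data carried by the arguments appearing in the applications below enters.)

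For the identity \eqref{2EST_tri}, I would rewrite the integrand as $\mathbf{w}\cdot\nabla\phi\,\psi=(\psi\,\mathbf{w})\cdot\nabla\phi$ and integrate by parts,
\[
\int_{\Omega}(\psi\,\mathbf{w})\cdot\nabla\phi\,dx=-\int_{\Omega}\phi\,\nabla\cdot(\psi\,\mathbf{w})\,dx+\int_{\Gamma}\phi\,\psi\,(\mathbf{w}\cdot\mathbf{n})\,dS.
\]
The boundary integral vanishes since $\mathbf{w}|_{\Gamma}=0$, and the product rule together with $\nabla\cdot\mathbf{w}=0$ gives $\nabla\cdot(\psi\,\mathbf{w})=\psi\,\nabla\cdot\mathbf{w}+\mathbf{w}\cdot\nabla\psi=\mathbf{w}\cdot\nabla\psi$, which leaves $-\int_{\Omega}\phi\,\mathbf{w}\cdot\nabla\psi\,dx$, as claimed.

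The one delicate point, and the step I expect to be the main obstacle, is that the divergence theorem invoked above is classical for smooth fields whereas here $\mathbf{w},\phi,\psi$ only belong to $H^1$. I would remove this gap by a density argument: approximate $\mathbf{w}$ in $H^1_0(\Omega)$ by smooth compactly supported divergence-free vector fields $\mathbf{w}_n$ (such approximations exist on Lipschitz domains) and $\phi,\psi$ in $H^1(\Omega)$ by smooth functions $\phi_n,\psi_n$, verify \eqref{2EST_tri} for each smooth triple by the classical divergence theorem, and then let $n\to\infty$. Passage to the limit in every term is licensed by the trilinear estimate \eqref{1EST_tri} applied to the differences, together with the continuity of $H^1\hookrightarrow L^4$, so that both sides converge to the desired expressions for $(\mathbf{w},\phi,\psi)$.
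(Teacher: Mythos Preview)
Your proposal is correct and follows essentially the same approach as the paper: H\"older's inequality with exponents $(4,2,4)$ followed by the Sobolev embedding $H^1\hookrightarrow L^4$ for \eqref{1EST_tri}, and the divergence theorem together with $\nabla\cdot\mathbf{w}=0$ and $\mathbf{w}|_{\Gamma}=0$ for \eqref{2EST_tri}. The paper organizes the integration by parts slightly differently---writing $\mathbf{w}\cdot\nabla\phi\,\psi=\mathbf{w}\cdot\nabla(\phi\psi)-\phi\,\mathbf{w}\cdot\nabla\psi$ and applying Stokes' formula to the first term---but this is equivalent to your computation, and your added density argument and explicit mention of the Poincar\'e step supply rigor that the paper leaves implicit.
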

\begin{proof} Inequalities in  \eqref{1EST_tri}  are  direct results of  H\"{o}lder's inequality  and Sobolev embedding theorem (cf.~\cite{Te1997}). To see \eqref{2EST_tri}, applying Stokes formula together with $\nabla \cdot \mathbf{w}=0$ and $\mathbf{w}|_{\Gamma}=0$ follows
\begin{align*}
\int_{\Omega} \mathbf{w} \cdot \nabla \phi \psi\,dx
&=\int_{\Omega} \mathbf{w} \cdot \nabla( \phi \psi)\,dx-\int_{\Omega}\phi  \mathbf{w} \cdot \nabla  \psi \,dx\\
&=\int_{\Gamma} \mathbf{w}\cdot n ( \phi \psi)\,dx-\int_{\Omega} \nabla\cdot \mathbf{w} \phi \psi\,dx
-\int_{\Omega}\phi  \mathbf{w} \cdot \nabla  \psi \,dx\\
&=-\int_{\Omega}\phi  \mathbf{w} \cdot \nabla  \psi \,dx.
\end{align*}

\end{proof}
\begin{lemma} \label{lem1}
  Let  $f\in L^\infty(\Omega)$. For $\bv\in L^2(\Omega)$ with $\nabla \cdot \bv=0$ and  $\bv|_{\Gamma}=0$,     there exists a unique weak solution  
to  equation  \eqref{sta_T} with Dirichlet boundary condition $T|_{\Gamma}=0$, which satisfies  
$T\in H^1_0(\Omega)\cap L^\infty(\Omega)$. Moreover, 
   \begin{align}
\|T\|_{L^2}+\|\nabla T\|_{L^2}\leq  \frac{C}{\kappa}  \|f\|_{L^2} \label{EST_TH1}
\end{align}
and 
   \begin{align}
\|T\|_{L^\infty}\leq C\|f\|_{L^{\infty}}, \label{EST_Tinfty}
\end{align}
where  $C>0$ depends on $\Omega$ but not on $f$.
   \end{lemma}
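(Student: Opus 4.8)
The equation is linear in $T$, so the plan is to treat it with linear elliptic theory, organized around the weak formulation in which the convection term is rewritten via the antisymmetry relation \eqref{2EST_tri}: a weak solution is a function $T\in H^1_0(\Omega)\cap L^\infty(\Omega)$ with
\[
\kappa\int_\Omega\nabla T\cdot\nabla\phi\,dx-\int_\Omega T\,\bv\cdot\nabla\phi\,dx=\int_\Omega f\phi\,dx\qquad\text{for all }\phi\in H^1_0(\Omega).
\]
This form is the natural one here: since $\bv$ lies only in $L^2(\Omega)$, the expression $\int_\Omega(\bv\cdot\nabla T)\phi\,dx$ need not make sense for every $\phi\in H^1_0(\Omega)$, whereas the rewritten term is controlled by $\|T\|_{L^\infty}\|\bv\|_{L^2}\|\nabla\phi\|_{L^2}$ once $T\in L^\infty(\Omega)$. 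I would argue in two stages --- first a regular velocity, then a limiting argument --- proving \eqref{EST_TH1} and \eqref{EST_Tinfty} along the way, and I expect the $L^\infty$ bound, with its constant independent of $\bv$, to be the main obstacle.

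\emph{Regular velocity; existence, uniqueness and the $H^1$ bound.} Assume first that $\bv$ is smooth, or merely $\bv\in H^1_0(\Omega)$ with $\nabla\cdot\bv=0$ (the case actually needed later, since $U_{\text{ad}}\subset H^1_0(\Omega)$). By \eqref{1EST_tri} the form $a(T,\phi):=\kappa\int_\Omega\nabla T\cdot\nabla\phi\,dx-\int_\Omega T\,\bv\cdot\nabla\phi\,dx$ is bounded on $H^1_0(\Omega)\times H^1_0(\Omega)$, and \eqref{2EST_tri} gives $a(T,T)=\kappa\|\nabla T\|_{L^2}^2$, hence $a$ is coercive by Poincar\'e's inequality. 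Lax--Milgram then yields a unique weak solution $T\in H^1_0(\Omega)$. Taking $\phi=T$ and using \eqref{2EST_tri} once more makes the convection term vanish, so $\kappa\|\nabla T\|_{L^2}^2=\int_\Omega fT\,dx\le C\|f\|_{L^2}\|\nabla T\|_{L^2}$ by Poincar\'e, which is \eqref{EST_TH1}. Uniqueness in the general setting follows the same way: the difference $w$ of two weak solutions lies in $H^1_0(\Omega)\cap L^\infty(\Omega)$, solves the homogeneous problem, and testing with $w$ forces $\kappa\|\nabla w\|_{L^2}^2=0$.

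\emph{The $L^\infty$ estimate.} Still with $\bv$ smooth, I would use Stampacchia's truncation method. For $k\ge0$ set $A_k=\{x\in\Omega:T(x)>k\}$ and test the equation with $(T-k)^+\in H^1_0(\Omega)$. The diffusion term gives $\kappa\|\nabla(T-k)^+\|_{L^2}^2$; the convection term is $\int_\Omega(\bv\cdot\nabla T)(T-k)^+\,dx=\tfrac12\int_\Omega\bv\cdot\nabla\big(((T-k)^+)^2\big)\,dx=0$ by the chain rule together with $\nabla\cdot\bv=0$ and $\bv|_\Gamma=0$ --- this cancellation is precisely what makes the eventual constant independent of $\bv$, and is the heart of the matter; and the right-hand side is at most $\|f\|_{L^\infty}\|(T-k)^+\|_{L^1}$. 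Combining with the Sobolev--Poincar\'e embedding $H^1_0(\Omega)\hookrightarrow L^q(\Omega)$ (with $q=2d/(d-2)$ if $d=3$, any finite $q$ if $d=2$, chosen so that $q-1>1$) and H\"older's inequality produces, for $h>k\ge0$,
\[
|A_h|\le\Big(\frac{C\|f\|_{L^\infty}}{\kappa(h-k)}\Big)^{q}|A_k|^{\,q-1},
\]
and Stampacchia's iteration lemma then forces $|A_D|=0$ for some level $D\le C\|f\|_{L^\infty}$, with $C$ depending only on $\Omega$ and $d$ (and on $\kappa$ through the displayed factor) but not on $\bv$ or $f$. Applying the same to $-T$ gives the matching lower bound, so $\|T\|_{L^\infty}\le C\|f\|_{L^\infty}$; in particular $T\in L^\infty(\Omega)$.

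\emph{General $\bv\in L^2(\Omega)$.} Finally, remove the regularity hypothesis by approximating $\bv$ in $L^2(\Omega)$ by smooth, compactly supported, divergence-free fields $\bv_n$, and letting $T_n$ denote the corresponding solutions furnished by the previous steps. The estimates \eqref{EST_TH1}--\eqref{EST_Tinfty} hold for $T_n$ uniformly in $n$, so along a subsequence $T_n\rightharpoonup T$ in $H^1_0(\Omega)$, $T_n\to T$ in $L^2(\Omega)$ by Rellich's theorem, and $T_n\stackrel{\ast}{\rightharpoonup}T$ in $L^\infty(\Omega)$. Since $\bv_n\cdot\nabla\phi\to\bv\cdot\nabla\phi$ in $L^1(\Omega)$ while $\{T_n\}$ is bounded in $L^\infty(\Omega)$, one may pass to the limit in $\int_\Omega T_n\,\bv_n\cdot\nabla\phi\,dx$; hence $T\in H^1_0(\Omega)\cap L^\infty(\Omega)$ solves the weak problem and inherits both estimates, and uniqueness was already established. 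The one genuinely delicate ingredient in the whole argument is the $\bv$-independence of the constant in \eqref{EST_Tinfty}, which hinges on the divergence-free cancellation in the truncation step.
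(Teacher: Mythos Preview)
Your argument is correct, and the $H^1$ estimate is obtained exactly as in the paper: test with $T$, the convection term vanishes by \eqref{2EST_tri}, and Poincar\'e finishes. For the $L^\infty$ bound the paper takes a different route, testing the equation with $T^{N-1}$ for even $N$ and letting $N\to\infty$ (a Moser-type iteration), and only cites Stampacchia's theory as an alternative yielding a finer estimate; you instead carry out the Stampacchia truncation explicitly. Both methods hinge on the same divergence-free cancellation that makes the constant independent of $\bv$, so they buy the same conclusion; your version has the advantage of being fully spelled out, while the paper's sketch leaves the passage $N\to\infty$ to the reader. You are also more careful than the paper in one respect: the lemma is stated for $\bv\in L^2(\Omega)$, but the paper's proof implicitly tests and integrates by parts as if $\bv$ were regular, whereas you supply the smooth-approximation and compactness step that makes the $L^2$ case rigorous and shows both estimates survive the limit.
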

\begin{proof}
The existence of a unique solution follows the standard approaches  for the elliptic equations 
(cf.~\cite{evans2010partial}). To see \eqref{EST_TH1},  taking  the {inner product} of \eqref{sta_T} with $T$ and integrating  by parts using \eqref{sta_BC}, we have
\begin{align}
\kappa \|\nabla T\|^2_{L^2} &=-\int_{\Omega} (\bv\cdot \nabla T)T\, dx
+\int_{\Omega}fT\,dx\nonumber\\
&\leq-\frac{1}{2}\int_{\Omega}\bv\cdot  \nabla (T^2)\,dx+ \|f\|_{L^2}\|T\|_{L^2}\nonumber\\
&= -\frac{1}{2} (\int_{\Gamma}\bv\cdot  n \, T^2\,dx - \int_{\Omega}\nabla \cdot \bv \,T^2\,dx)
+\|f\|_{L^2}\|T\|_{L^2} \nonumber\\
&=  \|f\|_{L^2}\|T\|_{L^2}\leq C \|f\|_{L^2}\|\nabla T\|_{L^2}, \label{1EST_TL2}
\end{align}
which follows 
$$
\|\nabla T\|_{L^2}\leq  \frac{C}{\kappa}  \|f\|_{L^2}.
$$
Note that in \eqref{1EST_TL2}  we have used Poncar\'{e} inequality  
$\|T\|_{L^2}\leq  C\| \nabla T\|_{L^2}$,
where   $C>0$ is a constant dependent on domain $\Omega$ but not $f$.   

Analogously, taking the inner product of  \eqref{sta_T} with $T^{N-1}$ for a positive even
integer $N$ and then letting $N\to \infty$ we get \eqref{EST_Tinfty}. In fact,  
a finer  estimate of $f$ in \eqref{EST_Tinfty} can be achieved  by using the  Stampacchia  theory.  
The reader is referred to \cite{stampacchia1965probleme} for details.
This completes the proof.
\end{proof}

To show the existence of an  optimal  control to problem $(P)$,  we first  introduce the weak solution to  \eqref{sta_T}--\eqref{sta_BC}. 

 \begin{definition}\label{def1}
Let $f\in L^{\infty}(\Omega)$ and $\bv\in U_{\text{ad}}$. $T\in H^1_0(\Omega)$  is said to be  a weak solution to system \eqref{sta_T}--\eqref{sta_BC}, if $T$ satisfies 
  \begin{align}
\kappa(  \nabla T,  \nabla \psi)- (T\bv , \nabla \psi)&=(f, \psi),  \quad  \forall\psi\in H^{1}_0(\Omega)\label{1weak_T}.
 \end{align}
\end{definition}

\begin{thm}\label{thm1} 
For $f\in L^\infty(\Omega)$, there exists an optimal velocity
 $\bv\in U_{\text{ad}}$   to problem $(P)$.
\end{thm}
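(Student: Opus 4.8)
The plan is to use the direct method of the calculus of variations, with the key input being the compactness of the control-to-state map furnished by Lemma~\ref{lem1}. Since $J(\bv)\geq 0$ for every $\bv\in U_{\text{ad}}$, the value $j:=\inf_{\bv\in U_{\text{ad}}}J(\bv)$ is finite and nonnegative, and we may choose a minimizing sequence $\{\bv_n\}\subset U_{\text{ad}}$ with $J(\bv_n)\to j$. The control cost satisfies $\frac{\gamma}{2}\|\bv_n\|_{H^1}^2\leq J(\bv_n)$, so $\{\bv_n\}$ is bounded in $H^1_0(\Omega)$; passing to a subsequence (not relabeled), $\bv_n\rightharpoonup \bv^*$ weakly in $H^1_0(\Omega)$ and, by the Rellich--Kondrachov theorem, $\bv_n\to\bv^*$ strongly in $L^4(\Omega)$. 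Both the constraint $\nabla\cdot\bv_n=0$ and the homogeneous trace $\bv_n|_\Gamma=0$ pass to the weak $H^1$ limit, so $\bv^*\in U_{\text{ad}}$.

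Next I would track the associated states $T_n:=T(\bv_n)$, which exist and are unique by Lemma~\ref{lem1}. Estimate \eqref{EST_TH1} gives the uniform bound $\|T_n\|_{L^2}+\|\nabla T_n\|_{L^2}\leq \frac{C}{\kappa}\|f\|_{L^2}$, so along a further subsequence $T_n\rightharpoonup T^*$ weakly in $H^1_0(\Omega)$ and $T_n\to T^*$ strongly in $L^4(\Omega)$ by the compact embedding $H^1(\Omega)\hookrightarrow L^4(\Omega)$. One then passes to the limit in the weak formulation \eqref{1weak_T} tested against a fixed $\psi\in H^1_0(\Omega)$: the diffusion term $\kappa(\nabla T_n,\nabla\psi)\to\kappa(\nabla T^*,\nabla\psi)$ by weak convergence, while for the advection term one writes $T_n\bv_n-T^*\bv^*=(T_n-T^*)\bv_n+T^*(\bv_n-\bv^*)$ and applies the $L^4\times L^4\times L^2$ H\"older splitting of Lemma~\ref{lem0}, so that $|(T_n\bv_n-T^*\bv^*,\nabla\psi)|\leq C\big(\|T_n-T^*\|_{L^4}\|\bv_n\|_{L^4}+\|T^*\|_{L^4}\|\bv_n-\bv^*\|_{L^4}\big)\|\nabla\psi\|_{L^2}\to 0$. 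Hence $T^*$ is a weak solution of \eqref{sta_T}--\eqref{sta_BC} with control $\bv^*$, and by the uniqueness part of Lemma~\ref{lem1}, $T^*=T(\bv^*)$.

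Finally I would invoke the lower semicontinuity of $J$ along the chosen subsequence. Since $T_n\to T^*$ strongly in $L^2(\Omega)$, the averages converge, $\langle T_n\rangle\to\langle T^*\rangle$, so $\|T_n-\langle T_n\rangle\|_{L^2}^2\to\|T^*-\langle T^*\rangle\|_{L^2}^2$; and since $\bv_n\rightharpoonup\bv^*$ weakly in $H^1_0(\Omega)$, weak lower semicontinuity of the norm gives $\|\bv^*\|_{H^1}\leq\liminf_n\|\bv_n\|_{H^1}$. Therefore $J(\bv^*)\leq\liminf_n J(\bv_n)=j$, and since $\bv^*\in U_{\text{ad}}$ forces $J(\bv^*)\geq j$, we conclude $J(\bv^*)=j$, i.e.\ $\bv^*$ is an optimal velocity.

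I expect the only genuinely delicate point to be the limit passage in the bilinear advection term $(T_n\bv_n,\nabla\psi)$, which is the manifestation of the non-convexity/bilinearity of problem $(P)$: a product of two merely weakly convergent sequences need not converge to the product of the limits, and it is precisely the uniform a priori bound \eqref{EST_TH1} combined with the compact Sobolev embedding that upgrades one factor to strong $L^4$ convergence and rescues the argument. Preservation of the divergence-free and boundary constraints under weak limits and lower semicontinuity of the quadratic functional are routine.
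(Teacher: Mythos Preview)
Your proof is correct and follows the same overall strategy as the paper (direct method: minimizing sequence, uniform a~priori bounds, compactness, passage to the limit in the weak formulation, lower semicontinuity). The one noteworthy technical difference is in how the bilinear term $(T_n\bv_n,\nabla\psi)$ is handled. The paper invokes the $L^\infty$ bound \eqref{EST_Tinfty} on $T_m$ together with strong $L^2$ convergence of $\bv_m$ and weak$^*$-$L^\infty$ convergence of $T_m$, splitting as $T_m(\bv_m-\bv^*)+(T_m-T^*)\bv^*$ and estimating the first piece via $\|T_m\|_{L^\infty}\|\bv_m-\bv^*\|_{L^2}\|\nabla\psi\|_{L^2}$. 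You instead use only the $H^1$ bound \eqref{EST_TH1} and the compact embedding $H^1(\Omega)\hookrightarrow L^4(\Omega)$ (valid for $d=2,3$) to upgrade \emph{both} factors to strong $L^4$ convergence, then apply the $L^4\times L^4\times L^2$ H\"older splitting. Your route is slightly more economical in that it never needs the $L^\infty$ estimate \eqref{EST_Tinfty}; the paper's route, on the other hand, would still work under the weaker hypothesis $\bv\in L^2$ (rather than $H^1$) on the velocity, since it only needs $\bv_m\to\bv^*$ in $L^2$. You also observe that the state term in $J$ actually \emph{converges} (not merely is lower semicontinuous) thanks to strong $L^2$ convergence of $T_n$, which is a small sharpening of the paper's argument.
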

\begin{proof}
Since $J$ is bounded from below, we  may choose a minimizing sequence 
 $\{\bv_{m}\} \subset U_{\text{ad}_1}$ such that 
 \begin{align}
 \lim_{m\to \infty} J(\bv_{m})=\inf_{\bv\in U_{\text{ad}}}J(\bv). \label{inf_J}
 \end{align}
This also indicates that $\{\bv_{m}\}$ is uniformly bounded in  $U_{\text{ad}}$, and hence  there exists a weakly convergent subsequence, still denoted by  $\{\bv_{m}\}$, such that 
    \begin{align}
&\bv_{m}\to \bv^{*} \quad \text{weakly in}\quad  H^{1}(\Omega), \ \ \text{as}\ \  m\to \infty,
 \label{cong_vH1}\\
&\bv_{m}\to \bv^{*} \quad \text{strongly  in}\quad L^2(\Omega), \ \ \text{as}\ \  m\to \infty. \label{cong_vL2}
 \end{align}
Let $\{T_m\}$ be the solutions corresponding to  $\{\bv_{m}\}$. Then  $\{T_m\}$ is uniformly bounded in $H^1(\Omega)\cap L^\infty(\Omega)$ 
according to  \eqref{EST_TH1} and \eqref{EST_Tinfty} . Thus there exists   a subsequence,  still denoted by $\{T_{m}\}$, satisfying 
  \begin{align}
 &T_{m}\to  T^* \quad \text{weakly  in}\  H^1(\Omega),  \ \text{as}\ \  m\to \infty, \label{EST_wcong_T}\\
& T_{m}\to  T^* \quad \text{weakly*  in}\  L^{\infty}(\Omega),  \ \text{as}\ \  m\to \infty. \label{EST_wstarcong_T}
 \end{align}
Next we  show  that   $T^*$  is the solution  corresponding to $\bv^{opt}$ by Definition \ref{def1}.  Recall  that $\bv_{m}$ and $T_{m}$ satisfy  
  \begin{align}
\kappa(  \nabla T_m,  \nabla \psi)- (T_m\bv_m , \nabla \psi)=(f, \psi),  \quad  \forall\psi\in H^{1}_0(\Omega)\label{weak_Tm},
 \end{align} 
With the help of  \eqref{EST_wcong_T}, it is easy to pass to the limit in the  first term on the left hand of \eqref{weak_Tm}. Next  we show that  applying \eqref{cong_vH1}--\eqref{cong_vL2} and \eqref{EST_wstarcong_T} makes passing to the limit in the nonlinear term $\bv_m  T_m\to \bv^{*}   T^*$ possible. 

In fact,  for the second term  on the left hand of \eqref{weak_Tm}, we have for $\psi\in H^1_0(\Omega)$,
 \begin{align}
& \left| \int_{\Omega} T_{m}\bv_{m}\cdot\nabla  \psi\, dx - \int_{\Omega}T^{*} \bv^{*}\cdot\nabla  \psi\, dx\right|\nonumber\\
 &\quad\leq \left| \int_{\Omega} T_{m}\bv_{m}\cdot\nabla  \psi
 -T_{m}\bv^{*}\cdot\nabla \psi\, dx\right| \nonumber\\
 &\qquad+\left| \int_{\Omega} T_{m}\bv^{*}\cdot\nabla \psi- T^{*} \bv^{*}\cdot\nabla \psi\, dx\right|\nonumber\\
 &\quad=I_1+I_2, \label{EST_I12}
 \end{align}
where 
  \begin{align*}
I_1  
&\leq  \|T_{m}\|_{L^{\infty}} \|\bv_{m}-\bv^{*}\|_{L^{2}} \|\nabla \psi\|_{L^2} 
\to 0\quad \text{as}\quad m\to\infty,
 \end{align*}
due to  \eqref{cong_vL2} and the uniform boundedness of $\|T_{m}\|_{L^{\infty}}$. Moreover,  $I_2\to 0$ due to   
 \eqref{EST_wstarcong_T} and $\bv^{*}\nabla \psi\in L^1(\Omega)$.
Clearly, $T^*\in H^1_0(\Omega)$  is the solution  corresponding to $\bv^{*}$ based on Definition \ref{def1}.

 Lastly, using the weakly lower semicontinuity property of norms yields 
\[\|\bv^{*}\|_{U_{\text{ad}}} \leq  \displaystyle \varliminf_{m\to \infty} \|\bv_m\|_{U_{\text{ad}}} \quad \text{and}\quad
\|T^*-\langle T^*\rangle \|_{L^2} \leq \varliminf_{m\to \infty} \|T_m-\langle T_m\rangle\|_{L^2}.
\]
In other words,
\[J(\bv^{*})\leq   \varliminf_{m\to \infty} J(\bv_m)=\inf_{\bv\in U_{\text{ad}}}J(\bv),\]
which indicates that $\bv^*$ is an optimal solution to  problem $(P)$.
\end{proof}

\section{Optimality Conditions}\label{opt_cond}
Now we derive the first order  necessary  optimality conditions  for problem $(P)$ by using a variational inequality  (cf.~\cite{lions1971optimal}), that is,  
if $\bv$ is an optimal solution to problem $(P)$, then
\begin{align} 
J'(\bv)\cdot (\psi-\bv)\geq 0, \quad  \psi\in U_{\text{ad}}. \label{var_ineq}
\end{align}
To establish the  G$\hat{a}$teaux differentiability of  $J(\bv)$, we first check the G$\hat{a}$teaux differentiability of $T$ with respect to $\bv$. 
  Let  $z$ be the G$\hat{a}$teaux of $T$ with respect to $\bv$ in the direction of $h\in U_{\text{ad}}$, i.e., $z = T'(\bv )\cdot h$. Then $z$ satisfies 
 \begin{equation}\label{z-equation}
	\begin{split}
		-\kappa \Delta z + \bv  \cdot \nabla z  + h \cdot \nabla T&=0,\\\
		 z|_{\Gamma}&=0.
	\end{split}
\end{equation}
Using the $L^2$-estimate as in Lemma \ref{lem1} with the help of Lemma \ref{lem0} and \eqref{EST_TH1}, we get
 \begin{align}
\kappa \|\nabla z\|^2_{L^2}&\leq \left|\int_{\Omega} (h\cdot \nabla  T) z\,dx\right| 
\leq C \|\nabla h\|_{L^2}\|\nabla T\|_{L^2} \|\nabla z\|_{L^2}, \label{EST_zL2}
\end{align}
which implies 
\begin{align}
 \|\nabla z\|_{L^2}  &\leq \frac{C}{\kappa} \|\nabla h\|_{L^2} \|\nabla T\|_{L^2}
 \leq  \frac{C}{\kappa^2} \|f\|_{L^2}\|\nabla h\|_{L^2}. \label{EST_zH1}
\end{align}
Therefore, $T(\bv)$ is  G$\hat{a}$teaux differentiable for $\bv\in U_{\text{ad}}$, so is $J(\bv)$.


\subsection{First Order Optimality Conditions}

Let $A=-\mathbb{P}\Delta$ be the Stokes operator   with
 $$
 D(A)=\{ H^1_0(\Omega)\cap H^2(\Omega)\colon \nabla \cdot \bv=0\},
$$ 
where $\mathbb{P}\colon L^2(\Omega)\to \{\bv\in L^2(\Omega)\colon \nabla \cdot \bv=0 \ \text{and}\ 
\bv\cdot \bn|_{\Gamma}=0\}$ is the Leray projector.  Note that $A$ is a strictly positive and self-adjoint operator. Moreover,  define operator $D\colon L^2(\Omega)\to L^2(\Omega)$  by $DT = T - \langle T\rangle$.  Then  $D$ is  a bounded linear operator.  The cost functional now  can be  rewritten as
 \begin{align}\label{J(v)}
 J(\bv)&=\frac{1}{2}(D^*DT, T) +\frac{\gamma}{2}(A\bv,\bv ),
  \end{align}
  where $D^*$ is the $L^2$-adjoint operator of $D$.
\begin{remark}\label{D}
Here we present some basic properties of operator $D$. For any $T, \psi\in L^2(\Omega)$, since $\langle T\rangle$ and $\langle \psi\rangle $ are constants, we have 
 $$\frac{1}{|\Omega|}\int_{\Omega}T\langle \psi\rangle \, dx
 =\langle T\rangle \langle \psi\rangle
 =\frac{1}{|\Omega|}\int_{\Omega}\langle T\rangle \psi\, dx.
 $$
Therefore,
\begin{align*}
(DT, \psi)&=\int_{\Omega}(T-\langle T\rangle )\psi\, dx=\int_{\Omega}T\psi\, dx -\int_{\Omega}\langle T\rangle \psi\, dx\nonumber\\
&=\int_{\Omega}T\psi\, dx -\int_{\Omega}T\langle \psi\rangle \, dx=(T, \psi-\langle \psi\rangle )=(T, D\psi),
\end{align*}
which says that $D$ is a self-adjoint operator on $L^2(\Omega)$, i.e., $D=D^*$.
Moreover, since 
$$ \langle T -  \langle T\rangle\rangle =\frac{1}{|\Omega|} \int_{\Omega} (T-\langle T\rangle)\, dx=\langle T\rangle-\langle T\rangle=0,$$ 
it is straightforward  to verify that 
\begin{align*}
D^*DT&=D(DT)=D(T-\langle T\rangle )=T-\langle T\rangle - \langle T -  \langle T\rangle\rangle =DT,
\end{align*}
which  implies that $D^2=D$, and hence the operator norm $\|D\|\leq 1$.
\end{remark}

Now let $q$ be the adjoint state associated with $T$. Then it is easy to verify that $q$ satisfies 
\begin{equation}\label{q-equation1}
	\begin{split}
		-\kappa \Delta q  - \bv \cdot \nabla q &=D^*DT \quad \mbox{in } \Omega,\\\
		 q|_{\Gamma}&=0.
	\end{split}
\end{equation}
Moreover, thanks to \eqref{EST_TH1} and $\|D\|\leq 1$,  we have
\begin{align}
\|\nabla q\|_{L^2}&\leq   \frac{C}{\kappa}  \|T\|_{L^2}\leq  \frac{C}{\kappa^2}  \|f\|_{L^2}.
 \label{EST_qH1}
\end{align}

The following theorem establishes the first order necessary optimality conditions for solving    the optimal solution. 

\begin{thm}
Assume that  $\bv^{\text{opt}}$ is an optimal solution to problem $(P)$. Let  $(T^{opt}, q^{opt} )$ be the corresponding solution to the state equations \eqref{sta_T}--\eqref{sta_BC} and  the adjoint system \eqref{q-equation1}. Then $(\bv^{\text{opt}}, T^{opt}, q^{opt})$  satisfies 
\begin{eqnarray}\label{case2}
\begin{cases}
	-\kappa \Delta T+ {\bv} \cdot \nabla T=f \quad 
	\text{ and }\quad T|_{\partial\Omega} = 0,\\
	-\kappa \Delta q - {\bv}  \cdot \nabla   q  = D^*DT \quad 	\text{ and }\quad q|_{\partial\Omega} = 0,\\
-\gamma\Delta {\bv} +\nabla p = q \nabla  T, \quad \nabla\cdot \bv = 0  \quad  \text{ and } \quad  \bv|_{\partial\Omega} = 0.
\end{cases}
\end{eqnarray}
\end{thm}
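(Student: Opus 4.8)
The plan is to derive the system \eqref{case2} directly from the variational inequality \eqref{var_ineq}. The first two equations are simply the state equation \eqref{sta_T}--\eqref{sta_BC} and the adjoint equation \eqref{q-equation1}, which hold by assumption; the real content is showing that the third (Stokes-type) equation encodes the gradient condition $J'(\bv^{\text{opt}})=0$. Since $U_{\text{ad}}$ is a linear subspace of $H^1_0(\Omega)$, the admissible directions $\psi-\bv^{\text{opt}}$ range over all of $U_{\text{ad}}$ (one can take $\psi = \bv^{\text{opt}} \pm h$ for any $h\in U_{\text{ad}}$), so the inequality \eqref{var_ineq} is in fact an equality: $J'(\bv^{\text{opt}})\cdot h = 0$ for all $h\in U_{\text{ad}}$.

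The key computation is to expand $J'(\bv)\cdot h$ using \eqref{J(v)}. Differentiating, $J'(\bv)\cdot h = (D^*DT, z) + \gamma(A\bv, h)$, where $z = T'(\bv)\cdot h$ solves the sensitivity equation \eqref{z-equation}. To eliminate $z$, I would test the adjoint equation \eqref{q-equation1} against $z$ and the sensitivity equation \eqref{z-equation} against $q$, integrating by parts. Testing \eqref{q-equation1} with $z$ gives $\kappa(\nabla q,\nabla z) - (\bv\cdot\nabla q, z) = (D^*DT, z)$; testing \eqref{z-equation} with $q$ gives $\kappa(\nabla z,\nabla q) + (\bv\cdot\nabla z, q) + (h\cdot\nabla T, q) = 0$. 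Using the skew-symmetry identity \eqref{2EST_tri} of Lemma~\ref{lem0} (valid since $\nabla\cdot\bv=0$, $\bv|_\Gamma=0$), the terms $(\bv\cdot\nabla q, z)$ and $(\bv\cdot\nabla z, q)$ are negatives of one another, so subtracting the two identities yields $(D^*DT, z) = (h\cdot\nabla T, q) = (q\nabla T, h)$. Hence $J'(\bv^{\text{opt}})\cdot h = (q\nabla T, h) + \gamma(A\bv^{\text{opt}}, h) = 0$ for all $h\in U_{\text{ad}}$, i.e. $\gamma A\bv^{\text{opt}} - \mathbb{P}(q\nabla T) = 0$ in the dual of $U_{\text{ad}}$. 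Unwinding the definition $A = -\mathbb{P}\Delta$ and the Leray projection, this is exactly $-\gamma\Delta\bv^{\text{opt}} + \nabla p = q\nabla T$ with $\nabla\cdot\bv^{\text{opt}}=0$ and $\bv^{\text{opt}}|_\Gamma = 0$, the pressure $p$ appearing as the Lagrange multiplier for the divergence-free constraint.

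The main obstacle is the bookkeeping in the integration-by-parts step: one must be careful that all boundary terms vanish (guaranteed by the homogeneous Dirichlet conditions on $T$, $q$, $z$, $\bv$, $h$) and that the trilinear terms have enough regularity to justify the manipulations — this is where Lemma~\ref{lem0}, together with the bounds \eqref{EST_TH1}, \eqref{EST_zH1}, \eqref{EST_qH1} on $T$, $z$, $q$ in $H^1_0\cap L^\infty$, is invoked to make every integral finite. A secondary point worth stating explicitly is the passage from the weak (projected) identity $\gamma A\bv^{\text{opt}} = \mathbb{P}(q\nabla T)$ to the strong PDE form with the pressure: this is the standard de Rham argument that a functional on $H^1_0$ vanishing on all divergence-free test fields is a gradient, which recovers $\nabla p$. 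I would remark that $q\nabla T \in L^{3/2}(\Omega)$ (since $q \in H^1_0 \subset L^6$ and $\nabla T \in L^2$) so the Stokes problem for $\bv^{\text{opt}}$ is well posed and the regularity $\bv^{\text{opt}}\in H^2$ assumed in $D(A)$ is consistent.
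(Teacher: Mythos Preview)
Your approach is essentially identical to the paper's: differentiate $J$ to get $J'(\bv)\cdot h=(D^*DT,z)+\gamma(A\bv,h)$, eliminate $z$ via the adjoint/sensitivity equations and the skew-symmetry identity \eqref{2EST_tri}, then read off the Stokes-type gradient equation with the de Rham pressure. One bookkeeping slip to fix: when you subtract the two tested identities you should obtain $(D^*DT,z)=-(h\cdot\nabla T,q)$, not $+(h\cdot\nabla T,q)$; consequently $J'(\bv)\cdot h=-(q\nabla T,h)+\gamma(A\bv,h)$, which is what actually yields $\gamma A\bv^{\text{opt}}-\mathbb{P}(q\nabla T)=0$ (your two sign errors happen to cancel in the final line).
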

\begin{proof}
In light of  \eqref{J(v)},  \eqref{q-equation1}, and  \eqref{2EST_tri}, the  G\^{a}teaux  derivative  of $J$  becomes 
\begin{align*}
		J'(\bv) \cdot h =&(D^*DT, z) + \gamma(A \bv, h)\nonumber\\
		=&(-\kappa \Delta q  - \bv \cdot \nabla q, z) + \gamma(A\bv, h)\nonumber\\
		=& (q,    - \kappa \Delta  z+ \bv\cdot  \nabla z) +  \gamma(A\bv, h). 
\end{align*}
Using \eqref{z-equation} we get 
\begin{align*}
	J'(\bv) \cdot h =&-( q, h \cdot \nabla T)+ \gamma(A\bv, h).
\end{align*}
If $\bv^{\text{opt}}$ is the optimal solution, then $J'(\bv^{\text{opt}})\cdot h\geq 0$ for any $h\in U_{\text{ad}}$. This yields the  following 
optimality condition 
\begin{align}
 \gamma A\bv^{\text{opt}}-\mathbb{P}(q\nabla T)=0. \label{opt_v}
\end{align}
In other words,  there exists $p\in L^2(\Omega)$ with $\int_{\Omega}p\,dx=0$ such that 
\begin{align*}
-\gamma\Delta \bv^{\text{opt}}+\nabla p=q\nabla T,
\end{align*}
which completes the proof.
\end{proof}

\subsection{Second Order Optimality Conditions}

In this section, we discuss  the  second order optimality conditions for characterizing the optimal velocity field.  In particular, it can be shown that the cost functional  $J$ has a strict local minimizer when the control weight $\gamma>0$ is sufficiently large.

\begin{thm}\label{SONC}
Let $(\bv, T, q)$ satisfy the first order necessary optimality system  \eqref{case2}.  If $\gamma>0$ is sufficiently large,  then  there exists some constant $\delta>0$ such that 
\begin{align}
J''(\bv)\cdot (h, h)\geq \delta\|h\|_{U_{\text{ad}}},   \label{opt_2rd}
\end{align}
for $ h\in U_{\text{ad}}$ satisfying  \eqref{z-equation}.
\end{thm}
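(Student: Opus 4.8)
The plan is to write $J''(\bv)\cdot(h,h)$ in closed form and show that the control-cost contribution, which is exactly $\gamma\|\nabla h\|_{L^2}^2$, absorbs everything else once $\gamma$ is large. The first step is to confirm that $T(\bv)$ is twice G\^ateaux differentiable: differentiating the linearized equation \eqref{z-equation} once more in the direction $h$ shows that $w:=T''(\bv)\cdot(h,h)$ solves the linear Dirichlet problem
\[
-\kappa\Delta w+\bv\cdot\nabla w=-2\,h\cdot\nabla z,\qquad w|_{\Gamma}=0,
\]
and the $L^2$-energy estimate used in Lemma~\ref{lem1}, combined with Lemma~\ref{lem0} and \eqref{EST_zH1}, gives $\|\nabla w\|_{L^2}\le C\kappa^{-1}\|\nabla h\|_{L^2}\|\nabla z\|_{L^2}$, so $w\in H^1_0(\Omega)$ is well defined and $J$ is twice G\^ateaux differentiable.

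Next I would differentiate $J'(\bv)\cdot h=(D^{*}DT,z)+\gamma(A\bv,h)$ once more in the direction $h$ to obtain
\[
J''(\bv)\cdot(h,h)=(D^{*}Dz,z)+(D^{*}DT,w)+\gamma(Ah,h)=\|Dz\|_{L^2}^2+(D^{*}DT,w)+\gamma\|\nabla h\|_{L^2}^2 .
\]
To handle the indefinite middle term I would test the adjoint equation \eqref{q-equation1} with $w$, integrate the Laplacian by parts, and use the skew-symmetry identity \eqref{2EST_tri} (legitimate since $\nabla\cdot\bv=0$ and $\bv|_{\Gamma}=0$); invoking the equation for $w$ then gives
\[
(D^{*}DT,w)=\bigl(q,\,-\kappa\Delta w+\bv\cdot\nabla w\bigr)=-2\int_{\Omega}q\,(h\cdot\nabla z)\,dx .
\]

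Then I would bound this term with the trilinear inequality \eqref{1EST_tri}, $\bigl|2\int_{\Omega}q\,(h\cdot\nabla z)\,dx\bigr|\le C\|\nabla h\|_{L^2}\|\nabla z\|_{L^2}\|\nabla q\|_{L^2}$, and insert the a priori bounds \eqref{EST_zH1} and \eqref{EST_qH1}, which are both $O(\|f\|_{L^2})$ with negative powers of $\kappa$; this yields a bound of the form $C\kappa^{-4}\|f\|_{L^2}^2\,\|\nabla h\|_{L^2}^2$. Discarding the nonnegative term $\|Dz\|_{L^2}^2$ and using $(Ah,h)=\|\nabla h\|_{L^2}^2$ together with Poincar\'e's inequality to pass to $\|h\|_{U_{\text{ad}}}$, I conclude
\[
J''(\bv)\cdot(h,h)\ge\bigl(\gamma-C\kappa^{-4}\|f\|_{L^2}^2\bigr)\|\nabla h\|_{L^2}^2\ge\delta\|h\|_{U_{\text{ad}}}^2
\]
for any $\gamma>C\kappa^{-4}\|f\|_{L^2}^2$, with $\delta>0$ (so the right-hand side of \eqref{opt_2rd} should be read as $\|h\|_{U_{\text{ad}}}^2$, i.e.\ coercivity of $J''$).

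The step I expect to be the main obstacle — though it is bookkeeping rather than anything deep — is discarding the contribution $(D^{*}DT,w)$: it depends on the \emph{second} variation $w$ of the state, not merely on $h$, so it is not obviously controllable. The resolution is that after the adjoint-equation manipulation it reduces to the single trilinear term $\int_{\Omega}q\,(h\cdot\nabla z)\,dx$, and both $z$ and $q$ are controlled by $\|f\|_{L^2}$ (with $z$ also linear in $\|\nabla h\|_{L^2}$), so the whole correction collapses into a multiple of $\|\nabla h\|_{L^2}^2$ with a coefficient that is finite and independent of $h$; the remaining care is only in tracking the powers of $\kappa$ so that the threshold on $\gamma$ is explicit. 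The $\|Dz\|_{L^2}^2$ term is harmless, being nonnegative, and need not be retained at all.
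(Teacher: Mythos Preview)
Your proposal is correct and follows essentially the same route as the paper: compute $J''(\bv)\cdot(h,h)=\|Dz\|_{L^2}^2+(D^{*}DT,w)+\gamma\|\nabla h\|_{L^2}^2$, use the adjoint equation and skew-symmetry to reduce the middle term to the trilinear expression $2(z,h\cdot\nabla q)$ (your $-2\int q(h\cdot\nabla z)$ is the same thing via \eqref{2EST_tri}), bound it by $C\kappa^{-4}\|f\|_{L^2}^2\|\nabla h\|_{L^2}^2$ using \eqref{EST_zH1} and \eqref{EST_qH1}, and absorb it into $\gamma\|\nabla h\|_{L^2}^2$. The only cosmetic difference is that the paper first writes the general bilinear form $J''(\bv)\cdot(h_1,h_2)$ and then specializes to $h_1=h_2=h$, and tests the $w$-equation with $q$ rather than the $q$-equation with $w$; your observation that \eqref{opt_2rd} should read $\|h\|_{U_{\text{ad}}}^2$ is also consistent with the paper's proof.
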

\begin{proof}
Let $h_i\in U_{\text{ad}}$  and $z_i=T'(\bv)\cdot h_i, i=1,2$. Then  we have 
\begin{equation*}
	\begin{split}
		-\kappa \Delta z_i  + \bv \cdot \nabla z_i + h_i \cdot \nabla T&=0 \quad \mbox{in } \Omega,\\\
		 z_i|_{\Gamma}&=0.
	\end{split}
\end{equation*}
Moreover, let  $Z=z'_1(\bv)\cdot h_2$. 
 Then $Z$ satisfies 
\begin{align}
		-\kappa \Delta Z +  h_2 \cdot \nabla z_1 +\bv \cdot \nabla Z+h_1 \cdot \nabla z_2&=0 \quad \mbox{in } \Omega,\label{Z-equation}\\
		 Z|_{\Gamma}&=0. \nonumber
\end{align}
Again applying an $L^2$-estimate for $Z$ and using \eqref{EST_zH1}, we can easily verify that 
\begin{align}
\|\nabla Z\|_{L^2}&\leq 
\frac{C}{\kappa}(\| \nabla h_2\|_{L^2}\|\nabla z_1\|_{L^2}+\|\nabla  h_1\|_{L^2}\|\nabla z_2\|_{L^2})\nonumber\\
& \leq\frac{C}{\kappa^3}\|f\|_{L^2}\| \nabla h_1\|_{L^2}\|\nabla h_2\|_{L^2}, \label{EST_ZH1}
\end{align}
which implies that $T(\bv)$ is twice G$\hat{a}$teaux differentiable for $\bv\in U_{\text{ad}}$, so is $J(\bv)$.

 Now differentiating $J'(\bv)\cdot h_1$ once again in the direction $h_2\in U_{\text{ad}}$ gives
\begin{align}\label{optimality_3}
		J''(\bv) \cdot (h_1,h_2) =(D^*Dz_2, z_1) +(D^*DT, Z)+ \gamma(Ah_2, h_1).	
\end{align} 
To further analyze  the second  term involving $Z$, we take  the  inner product of \eqref{Z-equation} with $q$ and apply 
 \eqref{2EST_tri}. We get
\begin{align*}
		&-\kappa (Z, \Delta q)- ( z_1, h_2 \cdot \nabla q) 
		-( Z, \bv\cdot \nabla q) -( z_2, h_1\cdot \nabla q)=0.
\end{align*}
With the help of   the adjoint equation  \eqref{q-equation1},  we obtain   
\begin{align*}
		( z_1, h_2 \cdot \nabla q)+( z_2, h_1\cdot \nabla q)
		=(Z, D^*DT).
\end{align*}
Therefore,
\begin{align*}
		J''(\bv) \cdot (h_1,h_2) =&	(D^*Dz_2, z_1) +( z_1, h_2 \cdot \nabla q)+( z_2, h_1\cdot \nabla q)
                                                  + \gamma(Ah_2, h_1).
\end{align*} 
Setting $h_1=h_2=h$ and $z_1=z_2=z=T'(\bv)\cdot h$ follows
\begin{align}\label{optimality_4}
		J''(\bv) \cdot (h, h) =\|Dz\|^2_{L^2}+2( z, h\cdot \nabla q)
		+\gamma\|A^{1/2}h\|^2_{L^2}.
\end{align} 
Furthermore, by \eqref{1EST_tri}, \eqref{EST_zH1} and \eqref{EST_qH1}, we get
\begin{align*}
|\int_{\Omega}z h\cdot \nabla q\, dx| 
\leq  C \|\nabla z\|_{L^2}\|\nabla h\|_{L^2}\|\nabla q\|_{L^2}
\leq   \frac{C}{\kappa^4} \|f\|^2_{L^2}\|A^{1/2} h\|^2_{L^2} 
\end{align*}
and 
\begin{align*}
		\|Dz\|_{L^2} \le C \|\nabla z\|_{L^2}  \le \dfrac{C}{\kappa^2} \| f\|_{L^2} \| A^{1/2} h\|_{L^2}.
\end{align*} 
Consequently, 
\begin{align}\label{EST_HJ1}
		|J''(\bv) \cdot (h, h)| &\leq  \frac{C}{\kappa^4} \|f\|^2_{L^2}\|A^{1/2} h\|^2_{L^2}	
		+\gamma\|A^{1/2} h\|^2_{L^2}
		=( \frac{C}{\kappa^4} \|f\|^2_{L^2}
		+\gamma)\|A^{1/2} h\|^2_{L^2}
\end{align} 
and 
\begin{align}\label{EST_HJ2}
	J''(\bv) \cdot (h, h) \geq &-2|( z, h\cdot \nabla q)|
		+\gamma\|A^{1/2} h\|^2_{L^2}
     = (\gamma-\frac{C}{\kappa^4}\|f\|^2_{L^2})\|A^{1/2} h\|^2_{L^2}.
	\end{align}  
Therefore, if $\gamma$ is large enough such that  
\begin{align}
\gamma-\frac{C}{\kappa^4}\|f\|^2_{L^2}\geq \delta>0, \label{cond_gamma}
\end{align}
  then  \eqref{opt_2rd} holds.
\end{proof}

\begin{lemma}\label{lem2}
There exists a constant $C>0$ such that
\begin{align}
		|(J''(\bv_1)  -J''(\bv_2))\cdot (h, h)| 
		\leq  \frac{C}{\kappa^5}\|\bv_1-\bv_2\|_{H^1}\|f\|^2_{L^2}\| h\|^2_{H^1},  \label{cond_J2nd}
\end{align}
for any $h, \bv_i\in U_{\text{ad}}, i=1,2$.
\end{lemma}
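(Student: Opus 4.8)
The plan is to start from the explicit representation of the second derivative obtained in the proof of Theorem~\ref{SONC}, namely \eqref{optimality_4},
\[
J''(\bv)\cdot(h,h)=\|Dz\|_{L^2}^2+2(z,h\cdot\nabla q)+\gamma\|A^{1/2}h\|_{L^2}^2,
\]
where, for a fixed $\bv\in U_{\text{ad}}$, $T=T(\bv)$ solves the state equation \eqref{sta_T}--\eqref{sta_BC}, $z=T'(\bv)\cdot h$ solves \eqref{z-equation}, and $q=q(\bv)$ solves the adjoint equation \eqref{q-equation1}. The last term depends only on $h$, so it cancels in $J''(\bv_1)-J''(\bv_2)$; hence it suffices to bound the difference of the first two terms. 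Write $T_i,z_i,q_i$ for the triple attached to $\bv_i$, $i=1,2$, and set $\bw=\bv_1-\bv_2$. The argument has two stages: (i) establish $H^1$-stability of the maps $\bv\mapsto T,z,q$, i.e.\ bound $\|\nabla(T_1-T_2)\|_{L^2}$, $\|\nabla(z_1-z_2)\|_{L^2}$, $\|\nabla(q_1-q_2)\|_{L^2}$ in terms of $\|\nabla\bw\|_{L^2}$; (ii) telescope the two quadratic expressions.

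For stage (i), subtract the two state equations and rewrite the convection term as $\bv_1\cdot\nabla T_1-\bv_2\cdot\nabla T_2=\bv_1\cdot\nabla(T_1-T_2)+\bw\cdot\nabla T_2$, so that
\[
-\kappa\Delta(T_1-T_2)+\bv_1\cdot\nabla(T_1-T_2)=-\bw\cdot\nabla T_2,\qquad (T_1-T_2)|_\Gamma=0 .
\]
Testing with $T_1-T_2$, the $\bv_1$-convection term vanishes because $\nabla\cdot\bv_1=0$ and $\bv_1|_\Gamma=0$ (Lemma~\ref{lem0}), and \eqref{1EST_tri} together with \eqref{EST_TH1} gives $\|\nabla(T_1-T_2)\|_{L^2}\le\frac{C}{\kappa^2}\|f\|_{L^2}\|\nabla\bw\|_{L^2}$. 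The same manipulation applied to \eqref{z-equation} shows that $z_1-z_2$ solves
\[
-\kappa\Delta(z_1-z_2)+\bv_1\cdot\nabla(z_1-z_2)=-\bw\cdot\nabla z_2-h\cdot\nabla(T_1-T_2),
\]
and the corresponding energy estimate, using \eqref{EST_zH1} for $\|\nabla z_2\|_{L^2}$ and the bound just obtained for $T_1-T_2$, yields $\|\nabla(z_1-z_2)\|_{L^2}\le\frac{C}{\kappa^3}\|f\|_{L^2}\|\nabla h\|_{L^2}\|\nabla\bw\|_{L^2}$. Likewise $q_1-q_2$ solves $-\kappa\Delta(q_1-q_2)-\bv_1\cdot\nabla(q_1-q_2)=D^*D(T_1-T_2)+\bw\cdot\nabla q_2$, and using $\|D^*D\|\le1$ (Remark~\ref{D}), the Poincar\'e inequality, \eqref{EST_qH1} for $\|\nabla q_2\|_{L^2}$, and the bound on $T_1-T_2$, one gets $\|\nabla(q_1-q_2)\|_{L^2}\le\frac{C}{\kappa^3}\|f\|_{L^2}\|\nabla\bw\|_{L^2}$.

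For stage (ii), write $\|Dz_1\|_{L^2}^2-\|Dz_2\|_{L^2}^2=(D(z_1+z_2),D(z_1-z_2))$ and bound it by $C(\|\nabla z_1\|_{L^2}+\|\nabla z_2\|_{L^2})\|\nabla(z_1-z_2)\|_{L^2}$; together with \eqref{EST_zH1} and the stability bound on $z_1-z_2$ this is $\le\frac{C}{\kappa^5}\|f\|_{L^2}^2\|\nabla h\|_{L^2}^2\|\nabla\bw\|_{L^2}$. For the bilinear term, split
\[
(z_1,h\cdot\nabla q_1)-(z_2,h\cdot\nabla q_2)=(z_1-z_2,h\cdot\nabla q_1)+(z_2,h\cdot\nabla(q_1-q_2)),
\]
and estimate each summand with the trilinear bound \eqref{1EST_tri} and the $H^1$-bounds on $z_i,q_i,z_1-z_2,q_1-q_2$ from stage (i); each summand is again $\le\frac{C}{\kappa^5}\|f\|_{L^2}^2\|\nabla h\|_{L^2}^2\|\nabla\bw\|_{L^2}$. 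Collecting the three contributions and using that on $H^1_0(\Omega)$ the seminorm $\|\nabla\cdot\|_{L^2}$ is equivalent to $\|\cdot\|_{H^1}$ gives \eqref{cond_J2nd}. I do not expect a genuine analytic obstacle here; the one point requiring care is the cascaded bookkeeping of the powers of $\kappa$: each difference solve contributes an extra factor $\kappa^{-1}$ relative to the quantities feeding its right-hand side, so that the $z$- and $q$-differences cost $\kappa^{-3}$, and the worst cross term in stage (ii) lands precisely on $\kappa^{-5}$, provided the divergence-free, homogeneous-boundary structure is used to kill the leading convection term in every energy estimate.
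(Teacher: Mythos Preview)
Your proof is correct, but it takes a different route from the paper's. The paper works from the earlier representation \eqref{optimality_3},
\[
J''(\bv)\cdot(h,h)=\|Dz\|_{L^2}^2+(D^*DT,Z)+\gamma\|A^{1/2}h\|_{L^2}^2,
\]
where $Z=z'(\bv)\cdot h$ is the second variation of $T$. Accordingly, in addition to bounding $\tilde T$ and $\tilde z$ as you do, the paper derives the equation for $\tilde Z=Z_1-Z_2$ and proves $\|\nabla\tilde Z\|_{L^2}\le\frac{C}{\kappa^4}\|\tilde\bv\|_{H^1}\|f\|_{L^2}\|h\|_{H^1}^2$; the difference $(J''(\bv_1)-J''(\bv_2))\cdot(h,h)$ is then controlled via $\|T\|_{L^2}\|\tilde Z\|_{L^2}$. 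You instead start from the post-adjoint form \eqref{optimality_4} and replace the $Z$-analysis by a Lipschitz estimate for the adjoint map $\bv\mapsto q$, obtaining $\|\nabla(q_1-q_2)\|_{L^2}\le\frac{C}{\kappa^3}\|f\|_{L^2}\|\nabla\bw\|_{L^2}$ and then telescoping $2(z,h\cdot\nabla q)$. Both decompositions land on the same $\kappa^{-5}$ power. Your route is a bit more economical, since it avoids introducing the auxiliary second-order variation $Z$ altogether and works only with quantities ($T$, $z$, $q$) that already appear in the first-order optimality system; the paper's route, on the other hand, makes the twice-differentiability structure of $T(\bv)$ more explicit.
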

\begin{proof}
Let $h, \bv_i \in U_{\text{ad}}$ and  $z_i=T'_i(\bv_i)\cdot h, i=1,2$. Here $T_i$ is the temperature corresponding to $\bv_i$. Then $z_i$ satisfies 
\begin{equation*}
	\begin{split}
		-\kappa \Delta z_i  + \bv_i \cdot \nabla z_i + h \cdot \nabla T_i&=0 \quad \mbox{in } \Omega,\\\
		 z_i|_{\Gamma}&=0. 
	\end{split}
\end{equation*}
Further let $\tilde{z}=z_1-z_2$, $\tilde{\bv}=\bv_1-\bv_2$ and $\tilde{T}=T_1-T_2$. Then 
\begin{equation}\label{zt-equation}
	\begin{split}
		-\kappa \Delta \tilde{z}  + \tilde{\bv} \cdot \nabla z_1 + \bv_2 \cdot \nabla \tilde{z} + h \cdot \nabla \tilde{T}&=0 \quad \mbox{in } \Omega,\\\
		 \tilde{z}|_{\Gamma}&=0.
	\end{split}
\end{equation}
By \eqref{sta_T}-\eqref{sta_BC} and \eqref{EST_TH1} it is easy to check that 
\begin{align}
\|\tilde{T}\|_{H^1}\leq  \frac{C}{\kappa} \|\tilde{\bv}\|_{H^1}\|T\|_{H^1} \leq  \frac{C}{\kappa^2}  \|\tilde{\bv}\|_{H^1} \|f\|_{L^2}.   \label{Tt}
\end{align}
Moreover, applying an $L^2$-estimate to \eqref{zt-equation} yields 
\begin{align}
\|\nabla \tilde{z}\|_{L^2} &\leq \frac{C}{k}(\|\tilde{\bv}\|_{H^1}\|z_1\|_{H^1}+\|h\|_{H^1}\|\tilde{T}\|_{H^1})\nonumber\\
&\leq  \frac{C}{k}\left(\|\tilde{\bv}\|_{H^1}\frac{C}{\kappa^2} \|f\|_{L^2}\| h\|_{H^1}+\|h\|_{H^1}\frac{C}{\kappa^2}  \|\tilde{\bv}\|_{H^1} \|f\|_{L^2}\right)\nonumber\\
&\leq \frac{C}{\kappa^3}\|\tilde{\bv}\|_{H^1}\|f\|_{L^2}\| h\|_{H^1}.\label{EST_ztH1}
\end{align}
Now let  $Z_i=z'_i(\bv_i)\cdot h, i=1,2,$. Then 
\begin{equation*}
	\begin{split}
		-\kappa \Delta Z_i  + 2h \cdot \nabla z_i +\bv_i\cdot \nabla Z_i&=0 \quad \mbox{in } \Omega,\\\
		 Z_i|_{\Gamma}&=0. 
	\end{split}
\end{equation*}
In light of \eqref{EST_ZH1}, we have 
\begin{align}
\|\nabla Z_i\|_{L^2} \leq\frac{C}{\kappa^3}\|f\|_{L^2}\| \nabla h\|^2_{L^2}.\label{2EST_ZH1}
\end{align}
Furthermore,  let $\tilde{Z}=Z_1-Z_2$. Then  
\begin{equation}\label{Zt-equation}
	\begin{split}
		-\kappa \Delta \tilde{Z}  + 2h \cdot \nabla \tilde{z} +\tilde{\bv} \cdot \nabla Z_1+ \bv_2\cdot \nabla\tilde{Z}&=0 \quad \mbox{in } \Omega,\\\
		 \tilde{Z}|_{\Gamma}&=0.
	\end{split}
\end{equation}
Again applying  an $L^2$-estimate to \eqref{Zt-equation} and using \eqref{EST_ztH1}-\eqref{2EST_ZH1} follow
\begin{align}
\|\nabla \tilde{Z}\|_{L^2}& \leq  \frac{C}{\kappa}  (2\|h\|_{H^1} \|\tilde{z}\|_{H^1}+\|\tilde{\bv}\|_{H^1} \|Z_1\|_{H^1})\nonumber\\
& \leq  \frac{C}{\kappa}  \left(2\|h\|_{H^1} \frac{C}{\kappa^3}\|\tilde{\bv}\|_{H^1}\|f\|_{L^2}\| h\|_{H^1}+\|\tilde{\bv}\|_{H^1} \frac{C}{\kappa^3}\|f\|_{L^2}\|  h\|^2_{H^1}\right)\nonumber\\
&\leq \frac{C}{\kappa^4}\|\tilde{\bv}\|_{H^1} \|f\|_{L^2}\| h\|^2_{H^1}.\label{EST_ZtH1}
\end{align}
Finally, applying  \eqref{optimality_3} together with   \eqref{EST_TH1},  \eqref{EST_zH1},   \eqref{EST_ztH1}-\eqref{2EST_ZH1}, \eqref{EST_ZtH1} and $\|D\|\leq 1$ follows
\begin{align*}
		&|(J''(\bv_1)  -J''(\bv_2))\cdot (h, h)| =\|Dz_1\|^2_{L^2} +(D^*DT, Z_1)+\gamma\|A^{1/2}h\|^2_{L^2}
		\nonumber\\
		&\qquad\quad-(\|Dz_2\|^2_{L^2} +(D^*DT, Z_2)+\gamma\|A^{1/2}h\|^2_{L^2})\nonumber\\
		&\qquad=(\|Dz_1\|^2_{L^2}-\|Dz_2\|^2_{L^2})+(D^*DT, Z_1-Z_2)  \nonumber\\
		&\qquad\leq (\|z_1\|_{L^2}+\|z_2\|_{L^2})\|z_1-z_2\|_{L^2}+\|T\|_{L^2} \|Z_1-Z_2\|_{L^2} \nonumber\\
		&\qquad\leq  \frac{C}{\kappa^2} \|f\|_{L^2}\| h\|_{H^1}
		\frac{C}{\kappa^3}\|\tilde{\bv}\|_{H^1}\|f\|_{L^2}\| h\|_{H^1}
		+  \frac{C}{\kappa}  \|f\|_{L^2} \frac{C}{\kappa^4}\|\tilde{\bv}\|_{H^1} \|f\|_{L^2}\| h\|^2_{H^1} \nonumber\\
		&\qquad\leq \frac{C}{\kappa^5}\|\tilde{\bv}\|_{H^1}\|f\|^2_{L^2}\| h\|^2_{H^1},
\end{align*}
which establishes  the desired result.
\end{proof}

Now we are in a position to address the second order sufficient conditions. Let $\mathbf{w}\in U_{\text{ad}}$ and $T$ be the associated  solution to  \eqref{sta_T}--\eqref{sta_BC}. Define 
\begin{align*}
\mathbb{T}(\mathbf{w})=\{\bv\in U_{\text{ad}}&\colon  z=T'(\mathbf{w})\cdot (\bv-\mathbf{w}) \ \text{satisfies}\\
& -\kappa \Delta z + \mathbf{w} \cdot \nabla z  + (\bv-\mathbf{w}) \cdot \nabla T=0,\
		 z|_{\Gamma}=0\}. 
\end{align*}

\begin{corollary}\label{thm_SOSC}
Let  $\bv^*$ satisfy the optimality condition \eqref{opt_v}. If $\gamma>0$ is sufficiently large,
then 
 there exist $ \epsilon, \delta_0>0$ such that 
 the quadratic growth condition 
\begin{align}
J(\bv^{*})+\delta_0\|\bv-\bv^{*}\|^2_{U_{\text{ad}}}\leq J(\bv)
  \label{gap}
\end{align}
holds for all $ \bv\in \mathbb{T}(\bv^{*})$ and  $\|\bv-\bv^{*}\|_{H^1}\leq \epsilon$. In particular, $J$ has a local minimum in $ \mathbb{T}(\bv^{*})$  at $\bv^*$.
\end{corollary}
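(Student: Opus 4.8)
The plan is to run the classical second-order-sufficiency argument: expand $J$ to second order around $\bv^{*}$ along the segment joining $\bv^{*}$ to a competitor $\bv$, annihilate the first-order term using the optimality condition \eqref{opt_v}, bound the quadratic term from below via the coercivity of $J''(\bv^{*})$ from Theorem~\ref{SONC}, and control the Taylor remainder with the Lipschitz estimate of Lemma~\ref{lem2}, absorbing it into the quadratic term on a sufficiently small ball.

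First I would fix $\bv\in\mathbb{T}(\bv^{*})$ and set $h=\bv-\bv^{*}\in U_{\text{ad}}$. Since $U_{\text{ad}}$ is a linear subspace, $\bv^{*}+sh\in U_{\text{ad}}$ for every $s\in[0,1]$, so the scalar function $g(s)=J(\bv^{*}+sh)$ is well defined with $g'(s)=J'(\bv^{*}+sh)\cdot h$ and $g''(s)=J''(\bv^{*}+sh)\cdot(h,h)$; Lemma~\ref{lem2} shows $g''$ is Lipschitz (hence continuous) on $[0,1]$, so $g\in C^{2}([0,1])$ and Taylor's theorem with integral remainder yields
\[
J(\bv)-J(\bv^{*})=g'(0)+\int_{0}^{1}(1-s)\,g''(s)\,ds .
\]
The hypothesis $\bv\in\mathbb{T}(\bv^{*})$ says precisely that $z=T'(\bv^{*})\cdot h$ solves the linearized equation \eqref{z-equation} with $\bv=\bv^{*}$ and $T=T^{opt}$, which is exactly the situation in which Theorem~\ref{SONC} applies at $\bv^{*}$.

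Next I would dispose of the first-order term: because $\pm h\in U_{\text{ad}}$ and $\bv^{*}$ satisfies the variational inequality \eqref{var_ineq} — equivalently the equality \eqref{opt_v}, since $(q\nabla T,h)=(\mathbb{P}(q\nabla T),h)$ for divergence-free $h$ with zero trace — one gets $g'(0)=J'(\bv^{*})\cdot h=0$. For the remainder I would split $g''(s)=J''(\bv^{*})\cdot(h,h)+\big(J''(\bv^{*}+sh)-J''(\bv^{*})\big)\cdot(h,h)$. By Theorem~\ref{SONC}, $J''(\bv^{*})\cdot(h,h)\ge\delta\|h\|_{U_{\text{ad}}}^{2}$ with $\delta=\gamma-C\kappa^{-4}\|f\|_{L^{2}}^{2}>0$ for $\gamma$ large; and by Lemma~\ref{lem2} applied with $\bv_{1}=\bv^{*}+sh$, $\bv_{2}=\bv^{*}$,
\[
\big|\big(J''(\bv^{*}+sh)-J''(\bv^{*})\big)\cdot(h,h)\big|\le \frac{C}{\kappa^{5}}\,s\,\|h\|_{H^{1}}\,\|f\|_{L^{2}}^{2}\,\|h\|_{H^{1}}^{2}\le \frac{C}{\kappa^{5}}\|f\|_{L^{2}}^{2}\|h\|_{H^{1}}^{3}.
\]
Integrating against $(1-s)\,ds$ gives $J(\bv)-J(\bv^{*})\ge\big(\tfrac{\delta}{2}-\tfrac{C}{2\kappa^{5}}\|f\|_{L^{2}}^{2}\|\bv-\bv^{*}\|_{H^{1}}\big)\|\bv-\bv^{*}\|_{U_{\text{ad}}}^{2}$, and choosing $\epsilon>0$ with $\tfrac{C}{2\kappa^{5}}\|f\|_{L^{2}}^{2}\epsilon\le\tfrac{\delta}{4}$ and $\delta_{0}=\delta/4$ yields \eqref{gap} for all such $\bv$ with $\|\bv-\bv^{*}\|_{H^{1}}\le\epsilon$; strict positivity of the right-hand side for $\bv\ne\bv^{*}$ gives local minimality in $\mathbb{T}(\bv^{*})$.

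I expect the only genuinely delicate point to be the justification that the remainder is of strictly higher order, i.e., that the cubic term $\|h\|_{H^{1}}^{3}$ can be absorbed uniformly into $\delta\|h\|_{U_{\text{ad}}}^{2}$ on a ball — this is exactly what Lemma~\ref{lem2} buys us, and it is what forces the restriction to the $\epsilon$-neighborhood (whose radius, notably, grows with $\gamma$). A secondary bookkeeping check is that $g\in C^{2}([0,1])$, so that the integral form of Taylor's theorem is legitimate for a functional that is only known to be twice G\^ateaux differentiable; this is not automatic from G\^ateaux differentiability alone but follows from the explicit formulas \eqref{optimality_3}--\eqref{optimality_4} together with the continuous dependence of $T,q,z,Z$ on $\bv$ that was already quantified in the course of proving Lemma~\ref{lem2}.
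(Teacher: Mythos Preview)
Your argument is correct and follows essentially the same route as the paper: vanish the first-order term via the optimality condition, bound $J''(\bv^{*})\cdot(h,h)$ from below by Theorem~\ref{SONC}, and control the remainder with Lemma~\ref{lem2}, then absorb the cubic term on a small ball. The only cosmetic difference is that the paper uses the Lagrange (mean-value) form of the Taylor remainder rather than the integral form you employ; your extra care in checking that $g\in C^{2}([0,1])$ so as to legitimize the expansion is actually more rigorous than the paper's treatment.
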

\begin{proof}
To see the gap between  $J(\bv^{*})$ and $J(\bv)$ for any $ \bv\in \mathbb{T}(\bv^{*})$ satisfying  $\|\bv-\bv^{*}\|_{H^1}\leq \epsilon$, we  apply a Taylor expansion of $J(\bv)$ around $\bv^{*}$. With the help of Theorem \ref{SONC} and Lemma \ref{lem2} and setting $h=\bv-\bv^*$, we have for  $\xi\in (0, 1)$,
\begin{align*}
J(\bv)-J(\bv^{*})&=J'(\bv^{*})\cdot (\bv-\bv^{*})+\frac{1}{2}J''(\bv^{*}+\xi(\bv-\bv^{*}))\cdot (\bv-\bv^{*}, \bv-\bv^{*})\\
&=\frac{1}{2}J''(\bv^{*})\cdot (\bv-\bv^{*}, \bv-\bv^{*})\\
&\quad+ \frac{1}{2} (J''(\bv^{*}+\xi(\bv-\bv^{*} ))-J''(\bv^{*} ))\cdot (\bv-\bv^{*}, \bv-\bv^{*} )\\
&\geq \frac{1}{2}  \delta \|\bv-\bv^{*} \|^2_{H^1}
-  \frac{1}{2}  \frac{C}{\kappa^5}\|\xi(\bv-\bv^{*}) \|_{H^1}\|f\|^2_{L^2}\| \bv-\bv^{*}\|^2_{H^1}\\
&=  \frac{1}{2}\left( \delta - \frac{C}{\kappa^5}\|\xi(\bv-\bv^{*} ) \|_{H^1}\|f\|^2_{L^2}\right)\| \bv-\bv^{*} \|^2_{H^1}\\
&\geq \frac{1}{2} \left(\delta-\frac{C\epsilon}{\kappa^5} \|f\|^2_{L^2}\right) \|\bv-\bv^{*} \|^2_{H^1}.
\end{align*}
Therefore, if  letting   $0<\delta_0\leq \frac{1}{2}(\delta-\frac{C\epsilon}{\kappa^5} \|f\|^2_{L^2})$ 
 or
 $\gamma\geq 2\delta_0 +\frac{C}{\kappa^4}\|f\|^2_{L^2}+ \frac{C\epsilon}{\kappa^5} \|f\|^2_{L^2} $
 for some constants $\delta_0,C>0$, then \eqref{gap} holds, which  completes the proof.
\end{proof}


\section{Numerical Implementation }\label{Sect:Implementation}
In this section, we shall present a detailed numerical 
implementation for solving the optimality system \eqref{case2} based on  a 2D problem. 
The following lemma establishes  the relation between the diffusivity coefficient $\kappa$ and the control weight parameter $\gamma$, which indicates that it is sufficient  to test the numerical examples  for $\kappa=1$. The results for other $\kappa$ values can then be obtained by this relation.

\begin{lemma}\label{lem:sol-k}
Let $[T_{\gamma}, q_{\gamma}, \bv_{\gamma},p_{\gamma}]$
be the solution to  \eqref{case2} corresponding $\kappa=1$ and $\gamma$. Let
$[T_{\kappa, \tilde \gamma}, q_{\kappa,\tilde \gamma}, \bv_{\kappa, \tilde \gamma},p_{\kappa, \tilde \gamma}]$
be the solution to  \eqref{case2} corresponding $\kappa$ and $\tilde\gamma$ where 
 $\tilde \gamma= \dfrac{1}{\kappa^4} \gamma$.
Then the following relation holds:
	\[
	T_{\kappa,\tilde\gamma} =  \dfrac{1}{\kappa} T_{\gamma}, \quad
	q_{\kappa,\tilde\gamma} =  \dfrac{1}{\kappa^2} q_{\gamma}, \quad
	\bv_{\kappa,\tilde\gamma} = \kappa \bv_{\gamma},
	\quad\mbox{ and } \quad
	p_{\kappa,\tilde\gamma} = \dfrac{1}{\kappa^3} p_{\gamma}.
	\]
\end{lemma}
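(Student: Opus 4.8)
The plan is to verify directly that the rescaled quadruple $[\frac{1}{\kappa}T_\gamma,\ \frac{1}{\kappa^2}q_\gamma,\ \kappa\bv_\gamma,\ \frac{1}{\kappa^3}p_\gamma]$ solves the optimality system \eqref{case2} with diffusivity $\kappa$ and weight $\tilde\gamma=\kappa^{-4}\gamma$, and then invoke uniqueness of the solution to that system (which follows from the well-posedness results in Lemma \ref{lem1}, the analogous estimate \eqref{EST_qH1} for the adjoint state, and the Stokes equation for $\bv$) to conclude that this rescaling \emph{is} $[T_{\kappa,\tilde\gamma},q_{\kappa,\tilde\gamma},\bv_{\kappa,\tilde\gamma},p_{\kappa,\tilde\gamma}]$. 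Recall that by hypothesis $[T_\gamma,q_\gamma,\bv_\gamma,p_\gamma]$ satisfies the $\kappa=1$ version of \eqref{case2}, namely
\begin{align*}
-\Delta T_\gamma + \bv_\gamma\cdot\nabla T_\gamma &= f,\\
-\Delta q_\gamma - \bv_\gamma\cdot\nabla q_\gamma &= D^*DT_\gamma,\\
-\gamma\Delta\bv_\gamma + \nabla p_\gamma &= q_\gamma\nabla T_\gamma,\qquad \nabla\cdot\bv_\gamma = 0,
\end{align*}
all with homogeneous Dirichlet boundary data.

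First I would substitute $T=\frac{1}{\kappa}T_\gamma$ and $\bv=\kappa\bv_\gamma$ into the state equation: the diffusion term gives $-\kappa\Delta(\frac1\kappa T_\gamma)=-\Delta T_\gamma$ and the convection term gives $(\kappa\bv_\gamma)\cdot\nabla(\frac1\kappa T_\gamma)=\bv_\gamma\cdot\nabla T_\gamma$, so the left side equals $-\Delta T_\gamma+\bv_\gamma\cdot\nabla T_\gamma=f$, as required; the boundary condition and $\nabla\cdot(\kappa\bv_\gamma)=0$ are immediate. Next, for the adjoint equation with $q=\frac{1}{\kappa^2}q_\gamma$: the left side becomes $-\kappa\Delta(\frac{1}{\kappa^2}q_\gamma)-(\kappa\bv_\gamma)\cdot\nabla(\frac{1}{\kappa^2}q_\gamma)=\frac{1}{\kappa}\bigl(-\Delta q_\gamma-\bv_\gamma\cdot\nabla q_\gamma\bigr)=\frac{1}{\kappa}D^*DT_\gamma$, while the right side is $D^*DT=D^*D(\frac1\kappa T_\gamma)=\frac1\kappa D^*DT_\gamma$ by linearity of $D^*D$; the two match. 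Finally, for the Stokes/control equation with $\tilde\gamma=\kappa^{-4}\gamma$, $\bv=\kappa\bv_\gamma$, $p=\frac{1}{\kappa^3}p_\gamma$: the left side is $-\tilde\gamma\Delta(\kappa\bv_\gamma)+\nabla(\frac{1}{\kappa^3}p_\gamma)=\frac{1}{\kappa^3}\bigl(-\gamma\Delta\bv_\gamma+\nabla p_\gamma\bigr)=\frac{1}{\kappa^3}q_\gamma\nabla T_\gamma$, and the right side is $q\nabla T=(\frac{1}{\kappa^2}q_\gamma)\nabla(\frac1\kappa T_\gamma)=\frac{1}{\kappa^3}q_\gamma\nabla T_\gamma$; again they agree, and $\nabla\cdot(\kappa\bv_\gamma)=0$ with the zero-mean normalization $\int_\Omega\frac{1}{\kappa^3}p_\gamma\,dx=0$ carrying over. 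Thus the rescaled quadruple is a solution of \eqref{case2} for the parameters $(\kappa,\tilde\gamma)$.

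The one genuinely nontrivial point is the appeal to uniqueness: \eqref{case2} is a coupled nonlinear system, so I would justify uniqueness of its solution (at least in the regime where the bilinear coupling is controlled, i.e. $\gamma$ large relative to $\|f\|_{L^2}$ and $\kappa$, consistent with the hypotheses already in play for the second-order analysis) by a contraction/a-priori-estimate argument using the linear bounds \eqref{EST_TH1}, \eqref{EST_qH1} and the trilinear estimate \eqref{1EST_tri}; alternatively, if the statement is understood as identifying \emph{a} solution branch rather than asserting global uniqueness, one simply declares $[T_{\kappa,\tilde\gamma},\dots]$ to be the branch obtained from $[T_\gamma,\dots]$ by the scaling, and the verification above is the whole content. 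Either way, the main obstacle is bookkeeping the four scaling exponents $(\frac1\kappa,\frac{1}{\kappa^2},\kappa,\frac{1}{\kappa^3})$ consistently across the three equations; the powers are forced — one picks the $T$-scaling freely, the state equation then fixes the $\bv$-scaling, the adjoint equation fixes the $q$-scaling, and the control equation simultaneously fixes the $p$-scaling and the relation $\tilde\gamma=\kappa^{-4}\gamma$ — so the real work is just checking that this over-determined-looking set of constraints is in fact consistent, which the computation above shows it is.
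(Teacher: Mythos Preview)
Your proposal is correct and takes essentially the same approach as the paper: a direct substitution of the rescaled quadruple into each of the three equations of \eqref{case2}, checking that the $\kappa$-powers cancel appropriately. The paper's proof is exactly this verification and nothing more; it does not address uniqueness of the nonlinear system at all, so your second paragraph---while a reasonable caveat---goes beyond what the authors actually prove (they tacitly read the lemma as identifying a solution branch, your second alternative).
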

\begin{proof}
Based on \eqref{case2},
	 it is straightforward to verify that  
\[
	-\kappa \Delta T_{\kappa,\tilde\gamma} + {\bv}_{\kappa,\tilde\gamma} \cdot \nabla T_{\kappa,\tilde\gamma}
	=-\Delta T_{\gamma}+{\bv}_{\gamma} \cdot \nabla T_{\gamma} = f,
\]
\[
\begin{split}
	&- \kappa \Delta q_{\kappa,\tilde\gamma} - {\bv}_{\kappa,\tilde\gamma} \cdot \nabla  q_{\kappa,\tilde\gamma} 
=
	\dfrac{1}{\kappa}
	 \left(-\Delta q_{\gamma} - {\bv}_{\gamma} \cdot \nabla  q_{\gamma}  \right)
	=
	\dfrac{1}{\kappa}D^*DT_\gamma
	=D^*DT_{\kappa, \tilde \gamma},
\end{split}
\]
and
\[
	-\tilde\gamma\Delta {\bv}_{\kappa,\tilde\gamma} + \nabla p_{\kappa,\tilde\gamma} = 
	\dfrac{1}{\kappa^3} \left( - \gamma \Delta {\bv}_{\gamma} + \nabla p_{\gamma} \right)
	= \dfrac{1}{\kappa^3} (q_{\gamma} \nabla  T_{\gamma})
	=q_{\kappa, \tilde\gamma} \nabla  T_{\kappa, \tilde\gamma}.
\]
This completes the proof.
\end{proof}

As a byproduct of the above lemma, we also have the following result
\[
	J(\kappa, \tilde \gamma) = \dfrac{1}{\kappa^2} J(\gamma),
\]
and therefore,
\[
	\dfrac{\log(J(\kappa,\tilde\gamma_1)/J(\kappa,\tilde\gamma_2))}{\log(\tilde\gamma_1/\tilde\gamma_2)} = 
	\dfrac{\log(J(\gamma_1)/J(\gamma_2)}{\log(\gamma_1/\gamma_2)}. 
\]

\subsection{Finite Element Formulation}
The weak formulation for the nonlinear system \eqref{case2}  is to find $T \in H_0^1(\Omega), q \in H_0^1(\Omega),
\bv  \in  [H_0^1(\Omega)]^2$ and $p \in L^2(\Omega)$ such that:
\begin{eqnarray}\label{weak-formulation}
\begin{cases}
(\kappa\nabla T, \nabla \phi) +  ( \bv\cdot\nabla T, \phi)   = (f,\phi),\quad \forall \, \phi\in H_0^1,\\
(\kappa\nabla q, \nabla \psi) -  ( \bv\cdot\nabla q, \psi)   - (DT,\phi) =0,\quad \forall \, \psi\in H_0^1,\\
(\gamma \nabla \bv_h,\bw) - (p, \nabla \cdot \bw)- (q\nabla T,\bw) = 0, \quad\forall  \, \bw\in [H_0^1(\Omega)]^2,\\
(\nabla \cdot \bv,\theta) = 0,\quad\forall  \, \theta\in L^2(\Omega).
\end{cases}
\end{eqnarray}
We aim to use finite element method to approximate the system. 
Let $\mathcal{T}_h$ be a partition of the domain $\Omega$ consisting of triangles in two dimensions. 
For every element $\tau\in\mathcal{T}_h$, we denote by $h_\tau$ its diameter and define the mesh size $h=\max\limits_{\tau\in\mathcal{T}_h}h_\tau$ for $\mathcal{T}_h$. On the mesh $\mathcal{T}_h$, we define the continuous finite element spaces as follows,
\begin{eqnarray*}
V_h &=& \{v\in H^1(\Omega): v|_\tau\in\mathbb{P}_2(\tau),\forall \tau\in\mathcal{T}_h\},\\
\textbf{V}_h &=&\{\bv\in [H^1(\Omega)]^2: \bv|_\tau\in[\mathbb{P}_2(\tau)]^2,\forall \tau\in\mathcal{T}_h\},\\
Q_h &=& \{q\in H^1(\Omega)\cap L_0^2(\Omega): q|_\tau\in\mathbb{P}_1(\tau),\forall \tau\in\mathcal{T}_h\}.
\end{eqnarray*}
Here $\mathbb{P}_\ell$ denotes the space of polynomials with degree less than or equal to $\ell$ and $L_0^2(\Omega):=\{\theta\in L^2(\Omega): \int_{\Omega}\theta d\bx = 0\}$.  The corresponding finite element spaces with homogeneous Dirichlet boundary condition are denoted by $V_h^0$ and $\textbf{V}_h^0.$ For the Stokes solver, we  apply the inf-sup stable Taylor-Hood element \cite{BF1991,BS2002}.

Below we introduce the bilinear and trilinear forms. For $\phi,\psi\in V_h$, $\bv,\bw\in \textbf{V}_h$, $\theta\in Q_h$, let
\begin{eqnarray*}
&&\mathcal{A}(\phi,\psi) = \sum_{\tau\in\mathcal{T}_h}\int_\tau\kappa\nabla\phi\cdot\nabla\psi d\bx,\\
&&\mathcal{C}(\bw;\phi,\psi) = \sum_{\tau\in\mathcal{T}_h}\int_\tau(\bw\cdot\nabla\phi)\psi d\bx,\\
&& \mathcal{D}(\bv,\bw) = \sum_{\tau\in\mathcal{T}_h}\int_\tau\gamma\nabla\bv: \nabla\bw d\bx,\\
&& \mathcal{B}(\bw,\theta) = \sum_{\tau\in\mathcal{T}_h}\int_\tau\nabla\cdot\bw\theta d\bx.
\end{eqnarray*}

Now, we are ready to propose the finite element schemes for system  (\ref{case2}) with $D^*DT = T-\langle T\rangle$.
The finite element scheme for the system (\ref{case2}) is to solve:
$T_h\in V_h^0$, $q_h\in V_h^0$, $\bv_h\in\textbf{V}_h^0$ and $p_h\in Q_h$, such that:
\begin{eqnarray}\label{FEM}
\begin{cases}
\mathcal{A}(T_h,\phi)-\mathcal{C}(\bv_h;T_h,\phi) = (f,\phi),\quad \forall \, \phi\in V_h^0,\\
\mathcal{A}(q_h,\psi)+\mathcal{C}(\bv_h;q_h,\psi) - (T_h-\langle T_h\rangle,\psi) = 0,\quad\forall  \, \psi\in V_h^0,\\
\mathcal{D}(\bv_h,\bw) -\mathcal{B}(\bw,p_h)- (q_h\nabla T_h,\bw) = 0, \quad\forall  \, \bw\in\textbf{V}_h^0,\\
\mathcal{B}(\bv_h,\theta) = 0,\quad\forall  \, \theta\in Q_h.
\end{cases}
\end{eqnarray}

\subsection{Picard and Newton iterative Solvers}
Note that \eqref{FEM} is a nonlinear system {involving a  Stokes problem}.  To tackle the nonlinearity, we combine both the Picard and Newton iterative solvers to achieve the required computational efficiency.

For the Picard iterative method, we seek to find $(T^{k+1}, q^{k+1},\bv^{k+1},p^{k+1})$ based on the previously given approximation $(T^{k}, q^{k},\bv^{k},p^{k})$. The idea simply replaces the unknown nonlinear terms by the
known solutions in the previous step. 
The nonlinear system can be linearized as follows:
\begin{eqnarray}\label{picard}
\begin{cases}
	-\kappa \Delta T^{k+1} +{\bv^k} \cdot \nabla T^{k+1}=f,\text{ and }T^{k+1}|_{\partial\Omega} = 0,\\
	-\kappa \Delta q^{k+1}-{\bv^k}  \cdot \nabla   q^{k+1}  = T^{k+1} - \dfrac{1}{|\Omega|}\int_{\Omega}T^{k+1}, 	\text{ and }q^{k+1}|_{\partial\Omega} = 0,\\
-\gamma\Delta {\bv^{k+1}}+\nabla p^{k+1} = q^{k+1} \nabla  T^{k+1},\quad \nabla\cdot \bv^{k+1} = 0, \mbox{ and } \bv^{k+1}|_{\partial\Omega} = 0.
\end{cases}
\end{eqnarray}
The finite element solution to \eqref{picard} is then to find  $(T_h^{k+1}, q_h^{k+1},\bv_h^{k+1},p_h^{k+1}) \in
V_h^0 \times V_h^0 \times \mathbf{V}_h^0 \times Q_h$ such that
\begin{eqnarray}\label{picard-FEM}
\begin{cases}
\mathcal{A}(T_h^{k+1},\phi)-\mathcal{C}(\bv_h^k;T_h^{k+1},\phi) = (f,\phi),\quad \forall \, \phi\in V_h^0,\\
\mathcal{A}(q_h^{k+1},\psi)+\mathcal{C}(\bv_h^{k};q_h^{k+1},\psi) - (T_h^{k+1}-\langle T_h^{k+1}\rangle,\psi) = 0,\quad\forall  \, \psi\in V_h^0,\\
\mathcal{D}(\bv_h^{k+1},\bw) -\mathcal{B}(\bw,p_h^{k+1})- (q_h^{k+1}\nabla T_h^{k+1},\bw) = 0, \quad\forall  \, \bw\in\textbf{V}_h^0,\\
\mathcal{B}(\bv_h^{k+1},\theta) = 0,\quad\forall  \, \theta\in Q_h.
\end{cases}
\end{eqnarray}
Note that the system \eqref{picard-FEM} can be solved sequentially. For the Picard's method in the finite element scheme, we set the following initial guess:
 $( T_h^0, q_h^0,\bv_h^0, p_h^0)$ such that
\begin{equation}\label{initial-guess}
\begin{cases}
\begin{split}
	&\bv_h^0 =0, \quad p_h^0=0,\\
	&\mathcal{A}(T_h^{0},\phi)= (f,\phi),\quad \forall \, \phi\in V_h^0,\\
	&\mathcal{A}(q_h^0,\psi) = (T_h^0-\langle T_h^0\rangle,\psi) = 0,\quad\forall  \, \psi\in V_h^0.
\end{split}
\end{cases}
\end{equation}

We now derive the formulation for the Newton's method in the PDE level.
Given an approximation to the solution field, 
$\{T^k, q^k, \bv^k,p^k\}$, we aim to find a perturbation 
$\{\delta T, \delta q, \delta \bv,\delta p\}$ so that
\[
    \{T^{k+1}, q^{k+1}, \bv^{k+1},p^{k+1}\} =
    \{T^k, q^k, \bv^k,p^k\} + \{\delta T, \delta q, \delta \bv,\delta p\}.
\]
and that
\begin{equation*}
\begin{cases}
	-\kappa \Delta T^{k+1}+{\bv}^{k+1} \cdot \nabla T^{k+1}=f, 
	\forall x\in \Omega,\text{ and }T^{k+1}|_{\partial\Omega} = 0,\\
	-\kappa \Delta q^{k+1}-{\bv}^{k+1}  \cdot \nabla   q^{k+1} 
	  - T^{k+1} + \langle T^{k+1}\rangle = 0,
	\forall x\in \Omega	\text{ and }q^{k+1}|_{\partial\Omega} = 0,\\
-\gamma\Delta {\bv}^{k+1} +\nabla p^{k+1} - q^{k+1} \nabla  T^{k+1}= 0,\  \nabla\cdot\bv^{k+1}|_{\Omega} = 0
\quad \forall x\in\Omega \mbox{ and } \bv^{k+1}|_{\partial\Omega} = 0.
\end{cases}
\end{equation*}
This above PDE system is  still a nonlinear system.
The idea to obtain a linear system is to assume that $\delta\cdot$ quantities are sufficiently small so that we can linearize the problem  with respect to those $\delta\cdot$ quantities using Taylor's expansion. 
Eventually we obtain the following linear system by dropping the higher order nonlinear terms in terms of $\delta\cdot$ quantities.
\begin{eqnarray}\label{Newton-PDE}
\begin{cases}
-\kappa\Delta T^{k+1} + \bv^{k+1}\cdot\nabla T^{k} +\bv^{k}\cdot\nabla T^{k+1} = f +\bv^k\cdot\nabla T^k,\ T^{k+1}|_{\partial\Omega}=0,\\
-\kappa\Delta q^{k+1} - \bv^{k+1}\cdot\nabla q^k - \bv^{k}\cdot\nabla q^{k+1}-T^{k+1}+\langle T^{k+1}\rangle = -\bv^k\cdot\nabla q^k,\ q^{k+1}|_{\partial\Omega}=0,\\
-\gamma\Delta\bv^{k+1}+\nabla p^{k+1}-q^{k+1}\nabla T^k-q^k\nabla T^{k+1} = -q^k\nabla T^k,
\bv^{k+1}|_{\partial\Omega}=0 \\
\nabla\cdot\bv^{k+1} = 0.
\end{cases}\label{Iter-Div-Free}
\end{eqnarray}
The finite element solution to \eqref{Newton-PDE} is then to find  $(T_h^{k+1}, q_h^{k+1},\bv_h^{k+1},p_h^{k+1}) \in
V_h^0 \times V_h^0 \times \mathbf{V}_h^0 \times Q_h$ such that
\begin{eqnarray}\label{Newton-FEM}
\begin{cases}
\mathcal{A}(T_h^{k+1},\phi)+\mathcal{C}(\bv_h^k;T_h^{k+1},\phi) +\mathcal{C}(\bv_h^{k+1};T_h^{k},\phi) = (f,\phi)+\mathcal{C}(\bv_h^{k};T_h^{k},\phi),\quad \forall \, \phi\in V_h^0,\\
\mathcal{A}(q_h^{k+1},\psi)-\mathcal{C}(\bv_h^{k};q_h^{k+1},\psi) - \mathcal{C}(\bv_h^{k+1};q_h^{k},\psi)  - (T_h^{k+1}-\langle T_h^{k+1}\rangle,\psi) = -\mathcal{C}(\bv_h^{k};q_h^{k},\psi) ,\quad\forall  \, \psi\in V_h^0,\\
\mathcal{D}(\bv_h^{k+1},\bw) -\mathcal{B}(\bw,p_h^{k+1})- (q_h^{k}\nabla T_h^{k+1},\bw) - (q_h^{k+1}\nabla T_h^{k},\bw)= 
- (q_h^{k}\nabla T_h^{k},\bw), \quad\forall  \, \bw\in\textbf{V}_h^0,\\
\mathcal{B}(\bv_h^{k+1},\theta) = 0,\quad\forall  \, \theta\in Q_h.
\end{cases}
\end{eqnarray}

\begin{remark}
Comparing to  Picard's method,  Newton's method has a faster convergence rate. However, its initial condition should be chosen wisely. For Picard's method, our numerical experiments  show that it can yield a satisfactory initial solution for the Newton's method very quickly. This suggests that we can use Picard's method at the first stage to obtain a good initial guess and then apply Newton's method to obtain the  converged numerical solutions.   The numerical experiments presented in the rest of this work are  conducted  using the combined Picard-Newton solver.
\end{remark}

\subsection{Numerical Algorithm}
In this subsection, we summarize our numerical method in the following algorithm.
\begin{algorithm}[h]\label{algorithm}
\begin{itemize}
\item Choose values in $\epsilon_1$, $\epsilon_2$, $n_1$, and $n_2$.
    	\item Set the initial guess $( T_h^0, q_h^0,\bv_h^0, p_h^0)$ as in \eqref{initial-guess}.
        \item Compute the cost functional:
            \begin{eqnarray}
            J_0=\frac{\gamma\|\nabla\bv_h^0\|^2}{2}+\frac{\|T_h^0-\langle{T}_h^0\rangle\|^2}{2}.
            \end{eqnarray}
    
   \item For $k = 0,\dots,n_1$, perform the Picard iteration as below: 
    \begin{itemize}
    \item Solve $(T_h^{k+1}, q_h^{k+1},\bv_h^{k+1},p_h^{k+1}) \in
V_h^0 \times V_h^0 \times \mathbf{V}_h^0 \times Q_h$ for \eqref{picard-FEM}.
        \item Compute the cost functional:
            \begin{eqnarray}
            J_k=\frac{\gamma\|\nabla\bv_h^k\|^2}{2}+\frac{\|T_h^k-\langle{T}_h^k\rangle\|^2}{2}.
            \end{eqnarray}
        \item If $\dfrac{|J_k-J_{k-1}|}{J_{k-1}}<\epsilon_1$, \textbf{STOP} and \textbf{OUTPUT} $T_h^k$, $q_h^k,$ 
        $\bv_h^k$, and $p_h^k$.
       \end{itemize}
       
 \item Set $( T_h^0, q_h^0,\bv_h^0, p_h^0) = ( T_h^k, q_h^k,\bv_h^k, p_h^k).$
 
    \item For $k = 0,\dots,n_2$, perform the Newton's iterations as below:
    \begin{itemize}
        \item Solve $(T_h^{k+1}, q_h^{k+1},\bv_h^{k+1},p_h^{k+1}) \in
V_h^0 \times V_h^0 \times \mathbf{V}_h^0 \times Q_h$  for \eqref{Newton-FEM}.
        \item Compute the cost functional:
            \begin{eqnarray}
            J_k=\frac{\gamma\|\nabla\bv_h^k\|^2}{2}+\frac{\|T_h^k-\langle{T}_h^k\rangle\|^2}{2}.
            \end{eqnarray}
        \item If $\dfrac{|J_k-J_{k-1}|}{J_{k-1}}<\epsilon_2$, \textbf{STOP} and 
        \textbf{OUTPUT} $T_h^k$, $q_h^k,$ $\bv_h^k$, and $p_h^k$.
    \end{itemize}
\end{itemize}
\caption{Finite Element Scheme for system (\ref{case2})}\label{Alg:Div-Free}
\end{algorithm}

\section{Numerical Experiments}\label{Sect:Num}
In this section, we shall present several numerical experiments {by employing different heat source profiles} to validate the proposed numerical schemes in Algorithm~\ref{Alg:Div-Free}. 
The domain for all test problems is set to be the unit square, i.e., $\Omega = (0,1)\times(0,1)$. Thanks to Lemma \ref{lem:sol-k}, it is sufficient to test for one  $\kappa$ value. {Without loss of generality}, we perform all our numerical tests only for $\kappa=1$. 
The numerical experiments are performed using the FENICS package \cite{FENICS} on the uniform triangular mesh with $h= 1/100$.

Recall that  as proven in Corollary  \ref{thm_SOSC}, a local minimizer  can be obtained if the control weight  $\gamma$ is sufficiently large. However,  a large control weight may result in a minor convective effect. Our first example shows that if $\gamma$ is set to be too large, ``doing nothing" might be optimal.

\begin{example}\label{DivFree-Test1}
{We first test a symmetric heat distribution}. Let  
 \[
f(x,y)  = 2\pi^2\sin(\pi x)\sin(\pi y).\] 
\end{example}
Set $ \kappa=1$ and $\gamma=1$.  The stop criterion is met at the ninth iteration as shown in 
Fig.~\ref{fig:divfree-1-T-g1}, where  Fig.~\ref{fig:divfree-1-T-g1}a. presents the optimal temperature distribution and   Fig.~\ref{fig:divfree-1-T-g1}b. presents the cost functional values with respect to $\gamma$ for each iteration.  However,  the cost functional does not seem to decay at all. In this case, $\gamma=1$ may be too large so that the convective effect becomes minor and hence, the thermal diffusion plays  a dominant role. Based on this observation, we proceed to test smaller $\gamma$ values and note that  convection becomes effective  when $\gamma\in$[E-7,E-5].  Using   the optimal convection-cooling design,  the cost functional value can be reduced by about 40\% for the current heat source term. 
 The results are illustrated  in Figs.~\ref{fig:divfree-1-T}-\ref{fig:divfree-1-V-Stream}.
 Moreover,  we also test how the cost functional, the variance of the temperature, and the velocity change  with respect to 
different $\gamma$. The results   are plotted in Fig.~\ref{fig:divfree-1-Conv}a. The corresponding  convergence rates  are plotted, respectively,  in 
Fig.~\ref{fig:divfree-1-Conv}b., which are computed using the   following standard formulas
 \begin{align}
 &   r_J(\gamma_i) = \dfrac{\ln(J(\gamma_{i+1})/J(\gamma_i))}
    {\ln(\gamma_{i+1}/\gamma_i)} \label{r_J}\\
&     r_{T}(\gamma_i) = \dfrac{\ln(\|T(\gamma_{i+1})-\langle T(\gamma_{i+1})\rangle\|_{L^2}/\|T(\gamma_{i})-\langle T(\gamma_{i})\rangle\|_{L^2})}
    {\ln(\gamma_{i+1}/\gamma_i)},\quad  \text{and} \label{r_T} \\
 & r_{\bv}(\gamma_i) = \dfrac{\ln(\gamma_{i+1}\|\nabla \bv(\gamma_{i+1})\|^2_{L^2}
    /\gamma_{i}\|\nabla \bv(\gamma_{i})\|^2_{L^2})}
    {\ln(\gamma_{i+1}/\gamma_i)}.  \label{r_v}
  \end{align} 

\begin{figure}[H]
\centering
\begin{tabular}{cc}
\includegraphics[width=.45\textwidth]{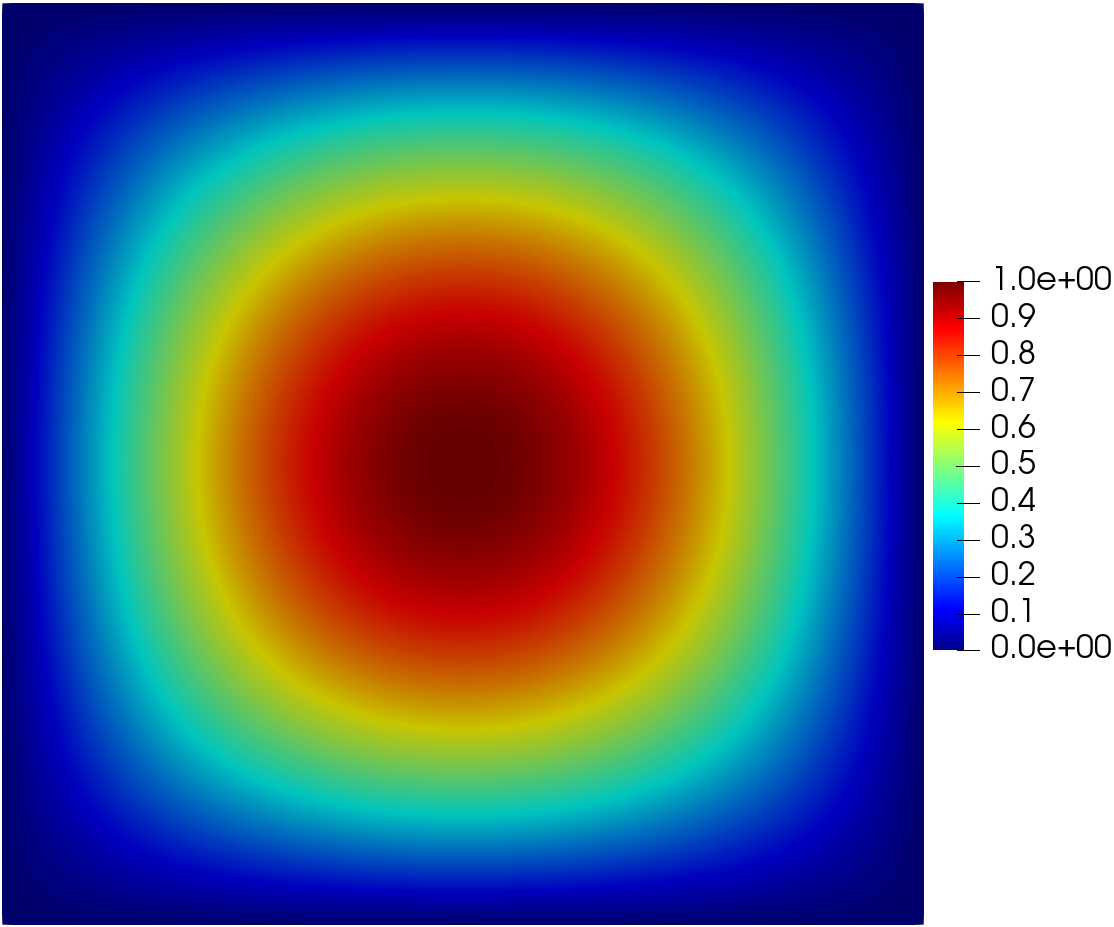}
&\includegraphics[width=.45\textwidth]{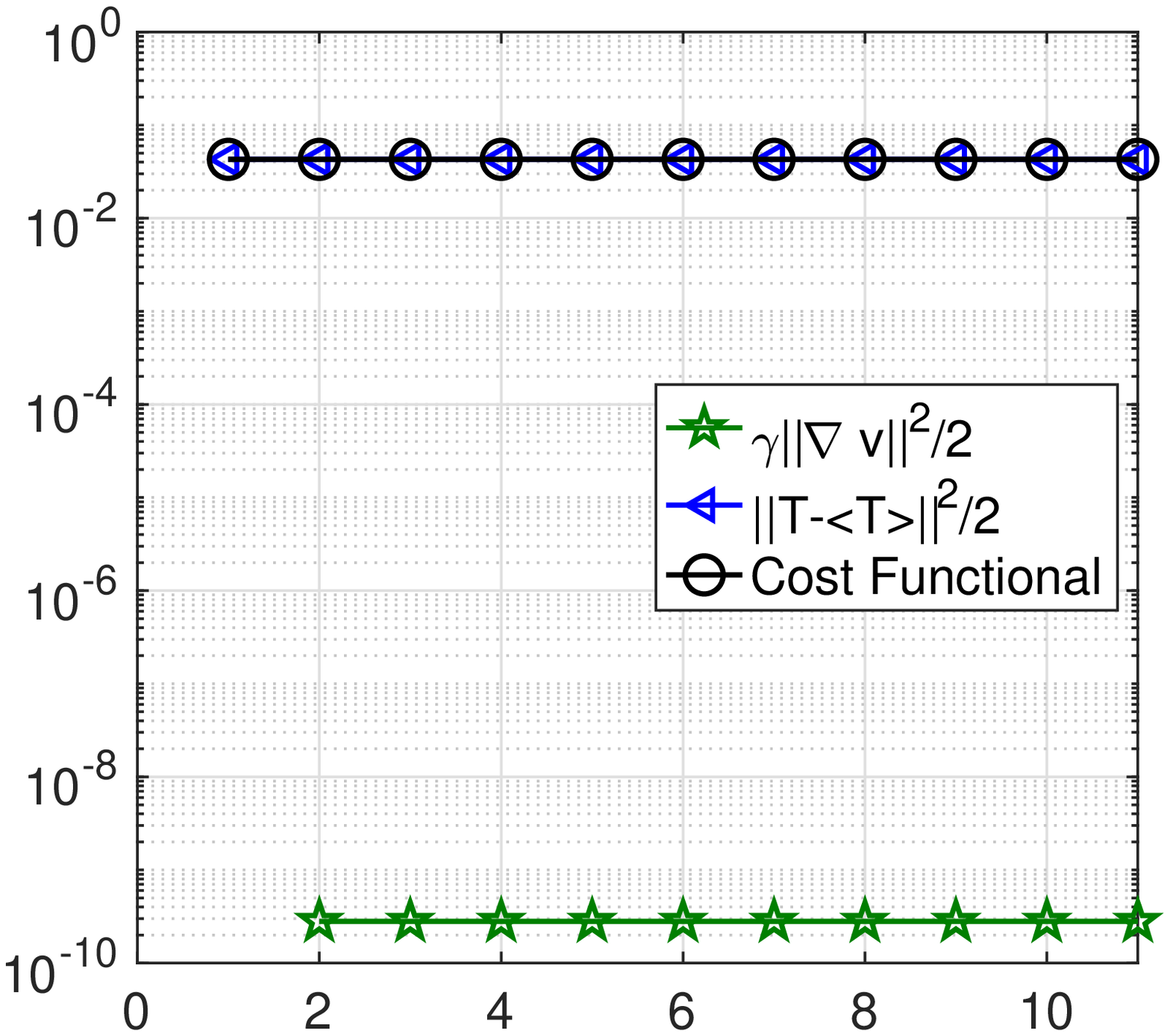}
\end{tabular}
\caption{Example~\ref{DivFree-Test1}: Plots of temperature $T_h$ of $\kappa = 1.0$ and $\gamma = 1.0$ for (a). Optimal heat distribution $T_h^9$; and (b). Convergence profiles for cost functional.}\label{fig:divfree-1-T-g1}
\end{figure}

\begin{figure}[H]
\centering
\begin{tabular}{cc}
\includegraphics[width=.35\textwidth]{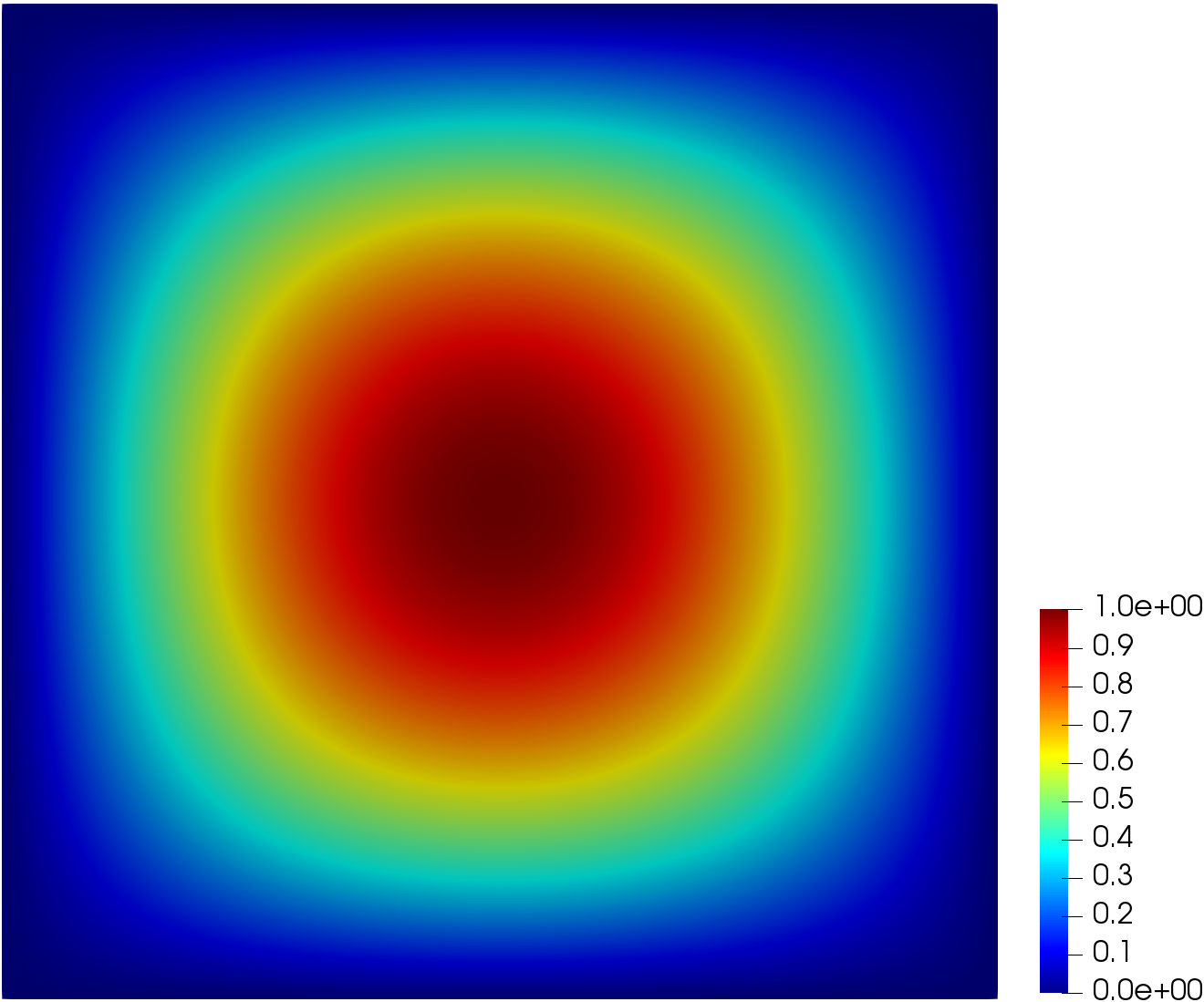}
&\includegraphics[width=.35\textwidth]{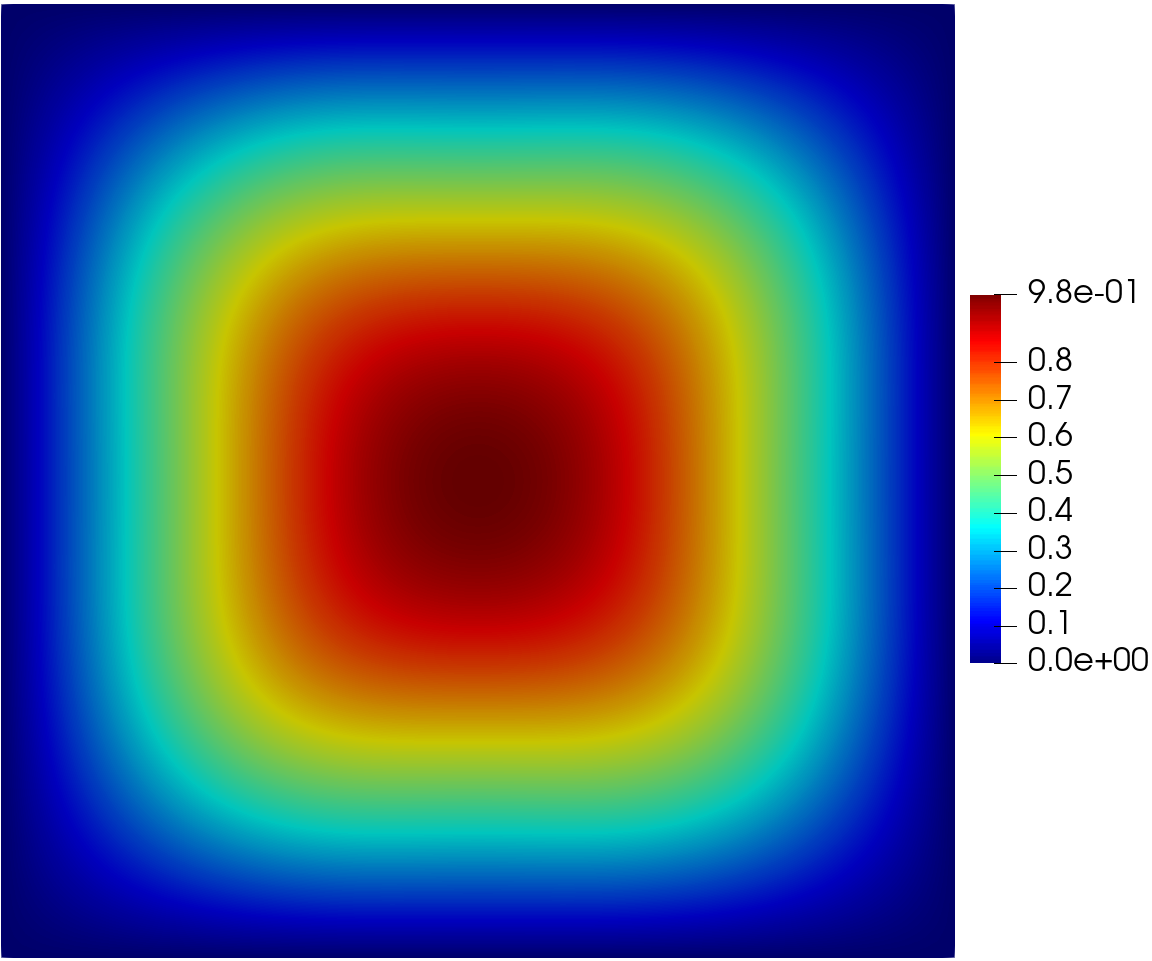}\\
(a) & (b)\\
\includegraphics[width=.35\textwidth]{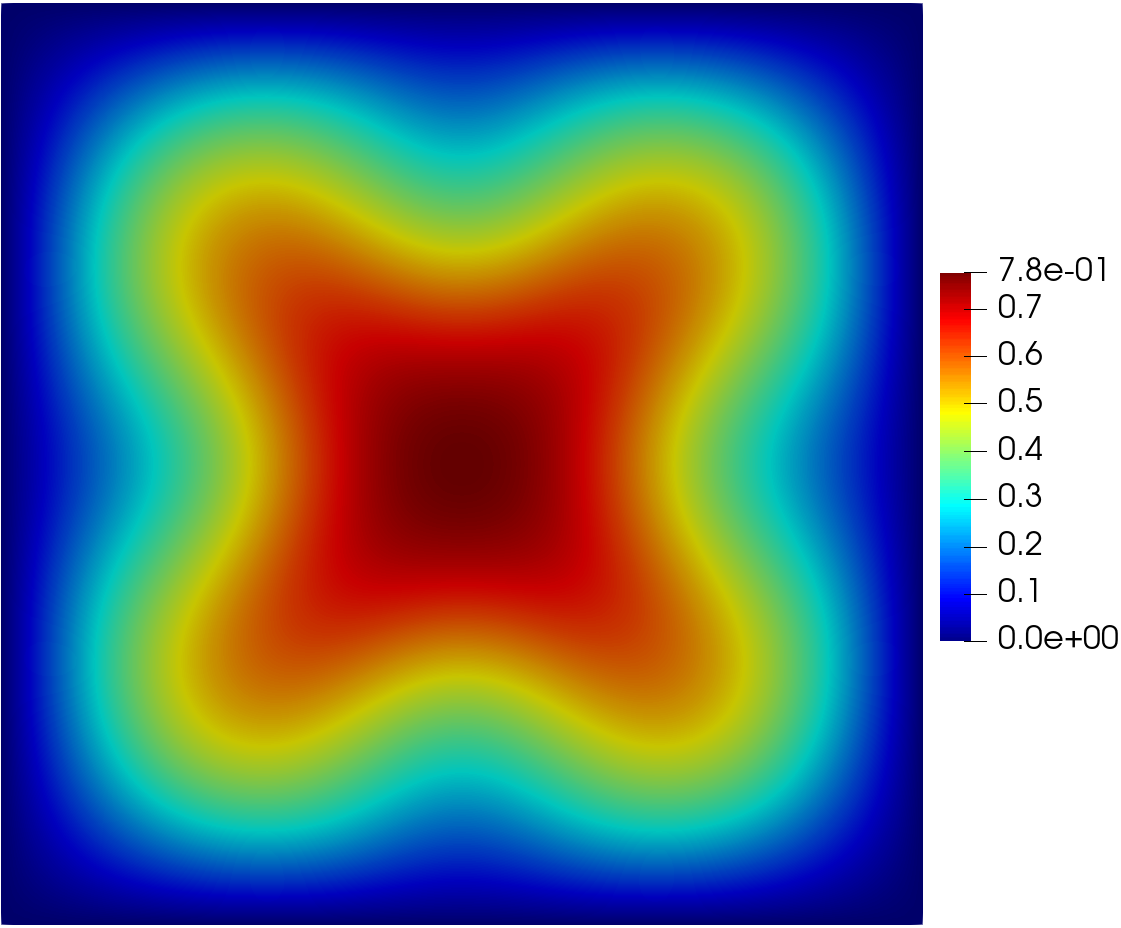}
&\includegraphics[width=.35\textwidth]{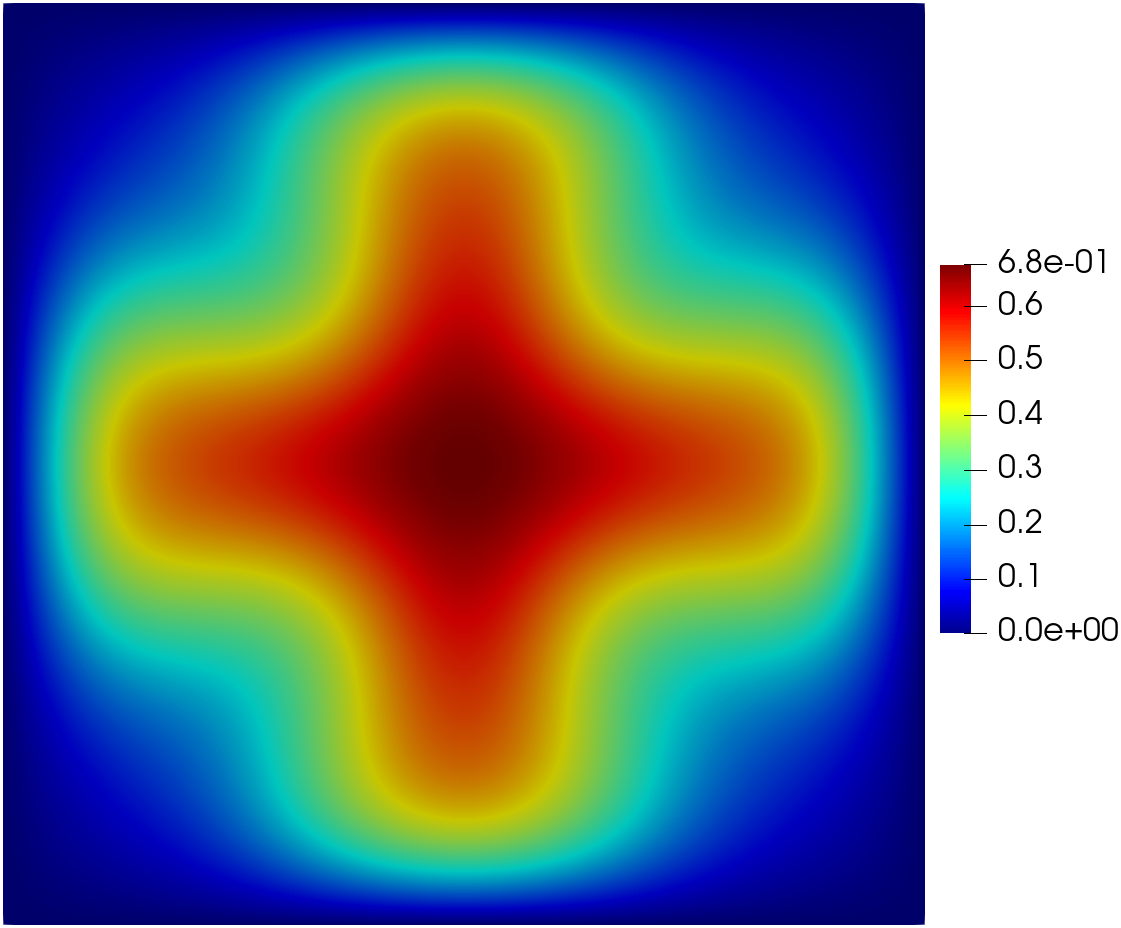}
\\
(c) & (d)\\
\end{tabular}
\caption{Example~\ref{DivFree-Test1}: Plots of temperature $T_h$ of $\kappa = 1.0$ for (a). Initial heat distribution $T_h^0$; and with (b). $\gamma = 3.6$E-6; (c). $\gamma = 8.5$E-7; (d). $\gamma = 3.9$E-7.}\label{fig:divfree-1-T}
\end{figure}



The initial heat distribution $T_h^0$ corresponding to $\bv = 0$ is shown in Fig.~\ref{fig:divfree-1-T}a.
The optimal heat distribution $T_h$ corresponding to $\gamma = 3.6$E-6, 8.5E-7, and 3.9E-7 are plotted in Fig.~\ref{fig:divfree-1-T}b-d. 
For the initial heat distribution, one can observe that the maximum of $T_h^0$ is 1.0.  {Thanks to advection effect,  the ``hot" region, which  is at the center of the domain initially, is now spread  out, but still inherits  certain symmetric pattern. As a result, the heat distribution over the entire domain is evened out.  Note that the maximum of $T_h$ is reduced to 9.8E-1, 7.8E-1, and 6.8E-1 corresponding to $\gamma = 3.6$E-6, 8.5E-7, and $3.9$E-7, respectively. Also, it is shown from these plots that the smaller value in $\gamma$ (which indicates less penalty on the control), the  more effective  is the convection-cooling}.

\begin{figure}[H]
\centering
\begin{tabular}{ccc}
\includegraphics[width=.3\textwidth]{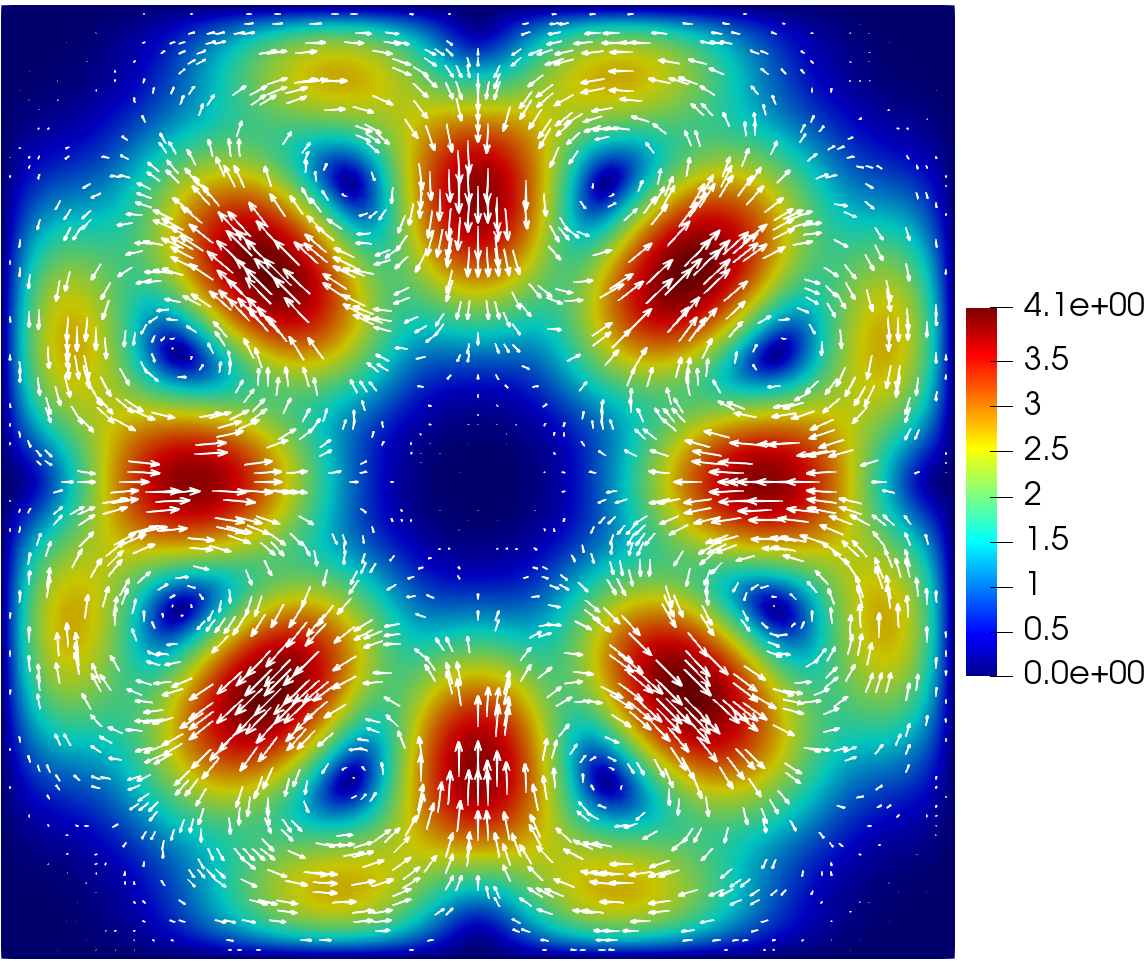}
&\includegraphics[width=.3\textwidth]{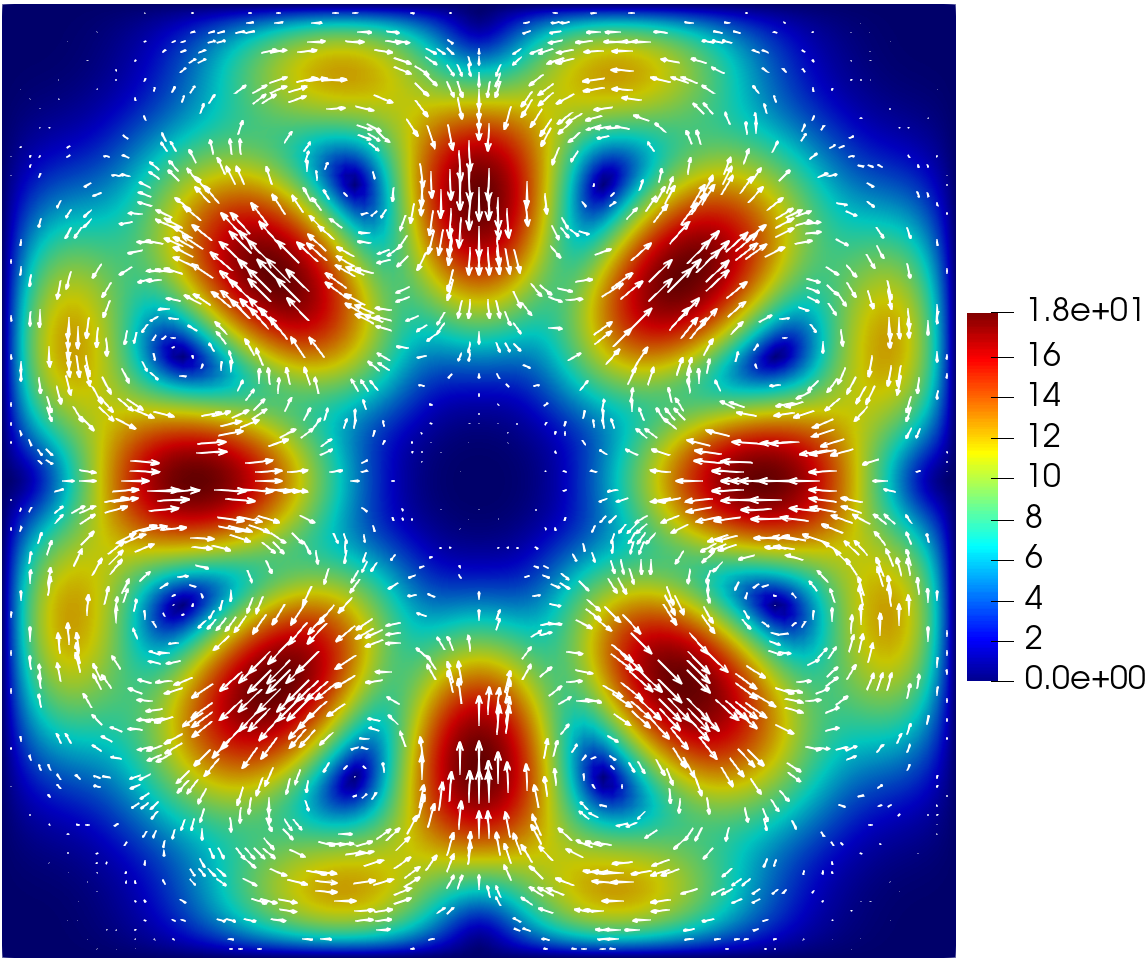}
&\includegraphics[width=.3\textwidth]{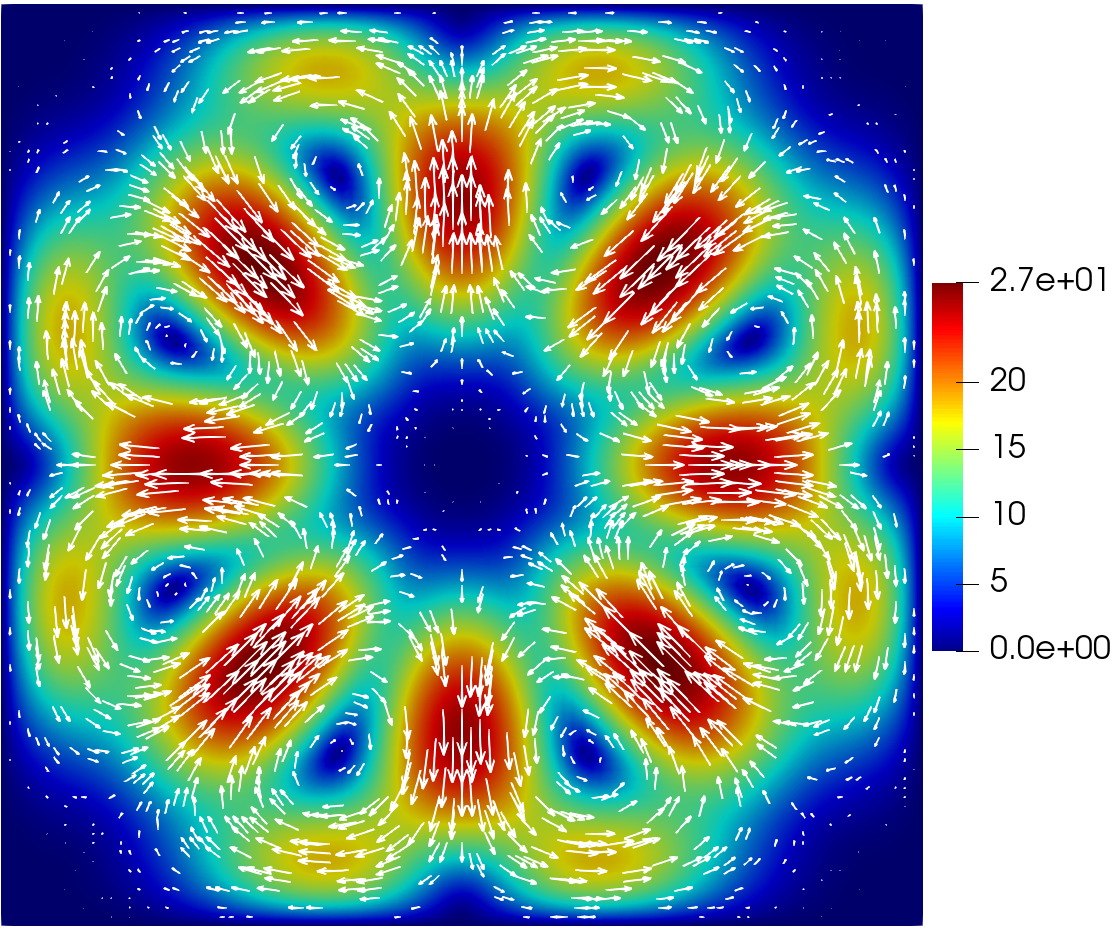}
\\
(a) & (b) &(c)
\end{tabular}
\caption{Example~\ref{DivFree-Test1}: Plots of velocity field ${\bv}_h$ for $\kappa = 1.0$ and (a). $\gamma = 3.6$E-6; (b). $\gamma = 8.5$E-7; (c). $\gamma = 3.9$E-7. Here, the color illustrates the magnitude of velocity $\bv_h$ and the vector plots the field of $\bv_h$.}\label{fig:divfree-1-V}
\end{figure}

\begin{figure}[H]
\centering
\begin{tabular}{ccc}
\includegraphics[width=.3\textwidth]{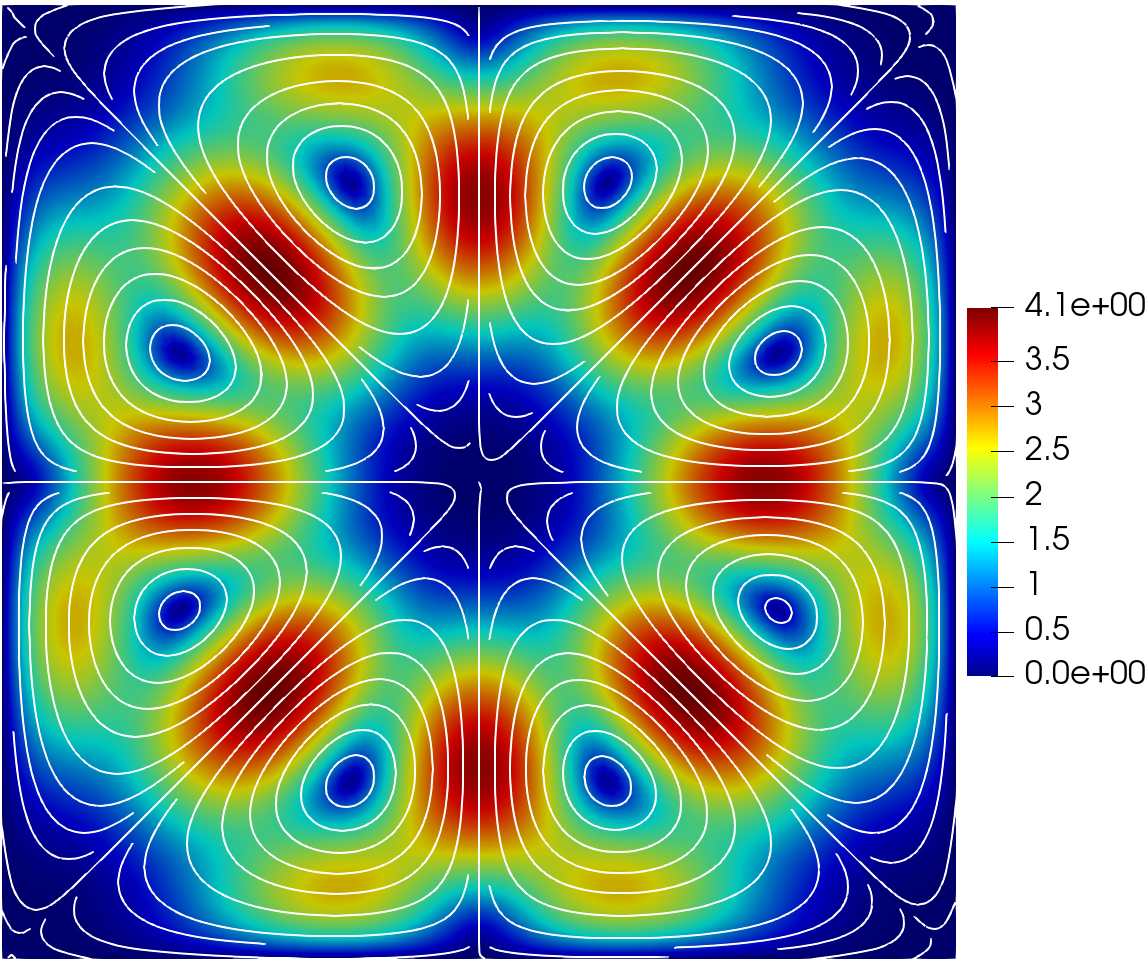}
&\includegraphics[width=.3\textwidth]{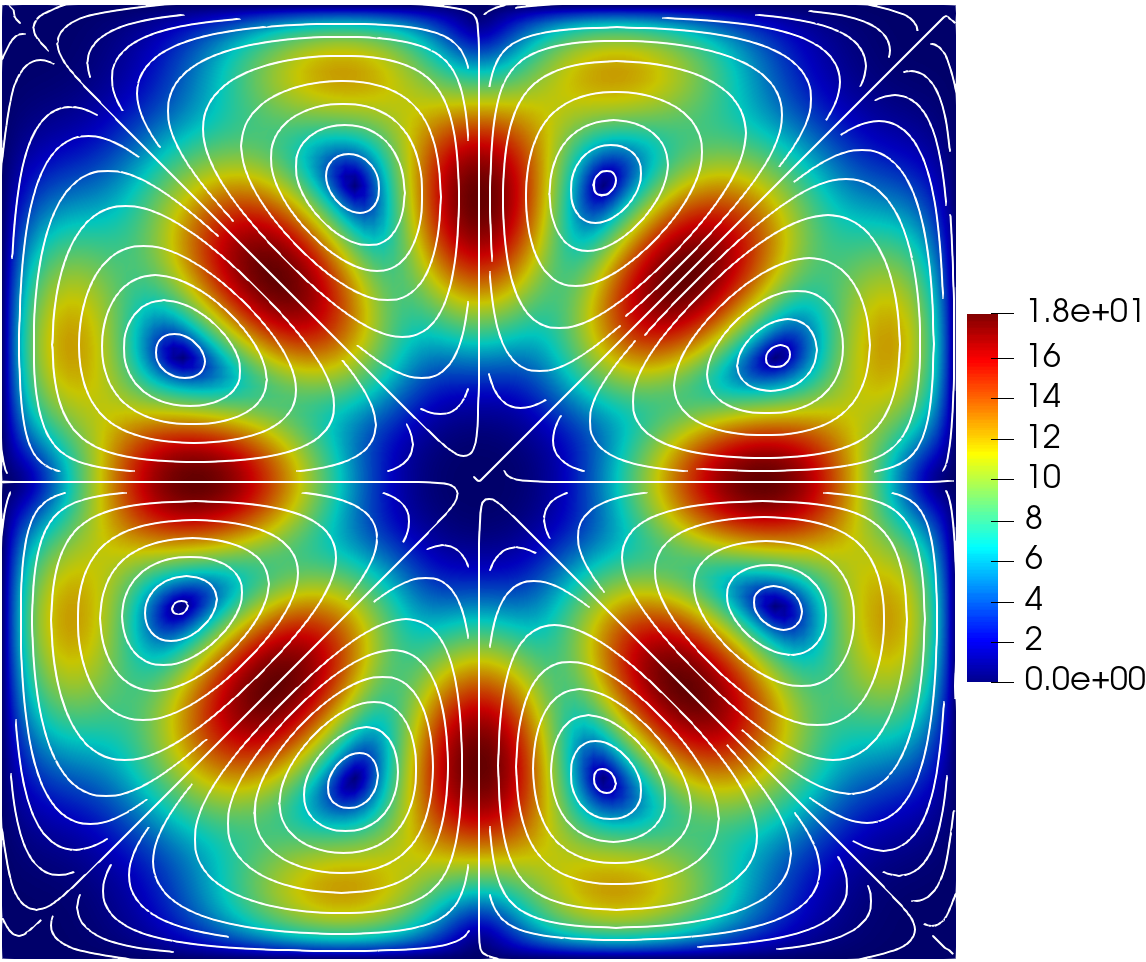}
&\includegraphics[width=.3\textwidth]{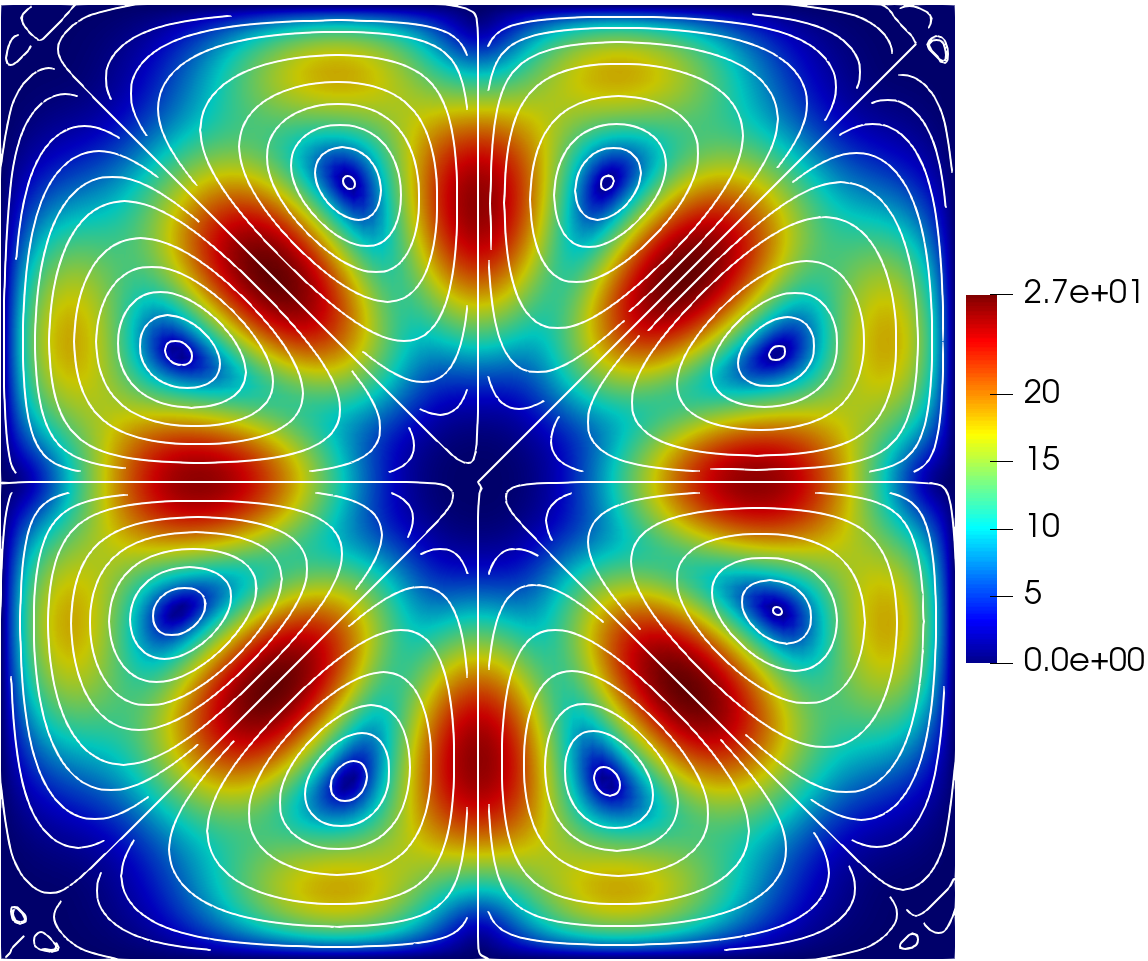}
\\
(a) & (b) &(c)
\end{tabular}
\caption{Example~\ref{DivFree-Test1}: Plots of streamlines of $\bv_h$ for $\kappa = 1.0$ and (a). $\gamma = 3.6$E-6; (b). $\gamma = 8.5$E-7; (c). $\gamma = 3.9$E-7. Here, the color illustrates the magnitude of velocity $\bv_h$ and the curve plots the streamline of $\bv_h$.}\label{fig:divfree-1-V-Stream}
\end{figure}

{On the other hand, as shown in Figs.~\ref{fig:divfree-1-V}-\ref{fig:divfree-1-V-Stream}, the optimal velocity  fields $\bv_h$ and their streamlines  computed by our algorithm for different $\gamma$ well preserve the  divergence-free condition and also present symmetric patterns. This also explains the symmetric pattern of the temperature distribution shown in Fig.~\ref{fig:divfree-1-T}.  Moreover,   the patterns for $\bv_h$ are very similar for different $\gamma$ values. However, the magnitude of $\bv_h$ increases as the $\gamma$ value decreases. }

\begin{figure}[H]
\centering
\begin{tabular}{cc}
\includegraphics[width=.45\textwidth]{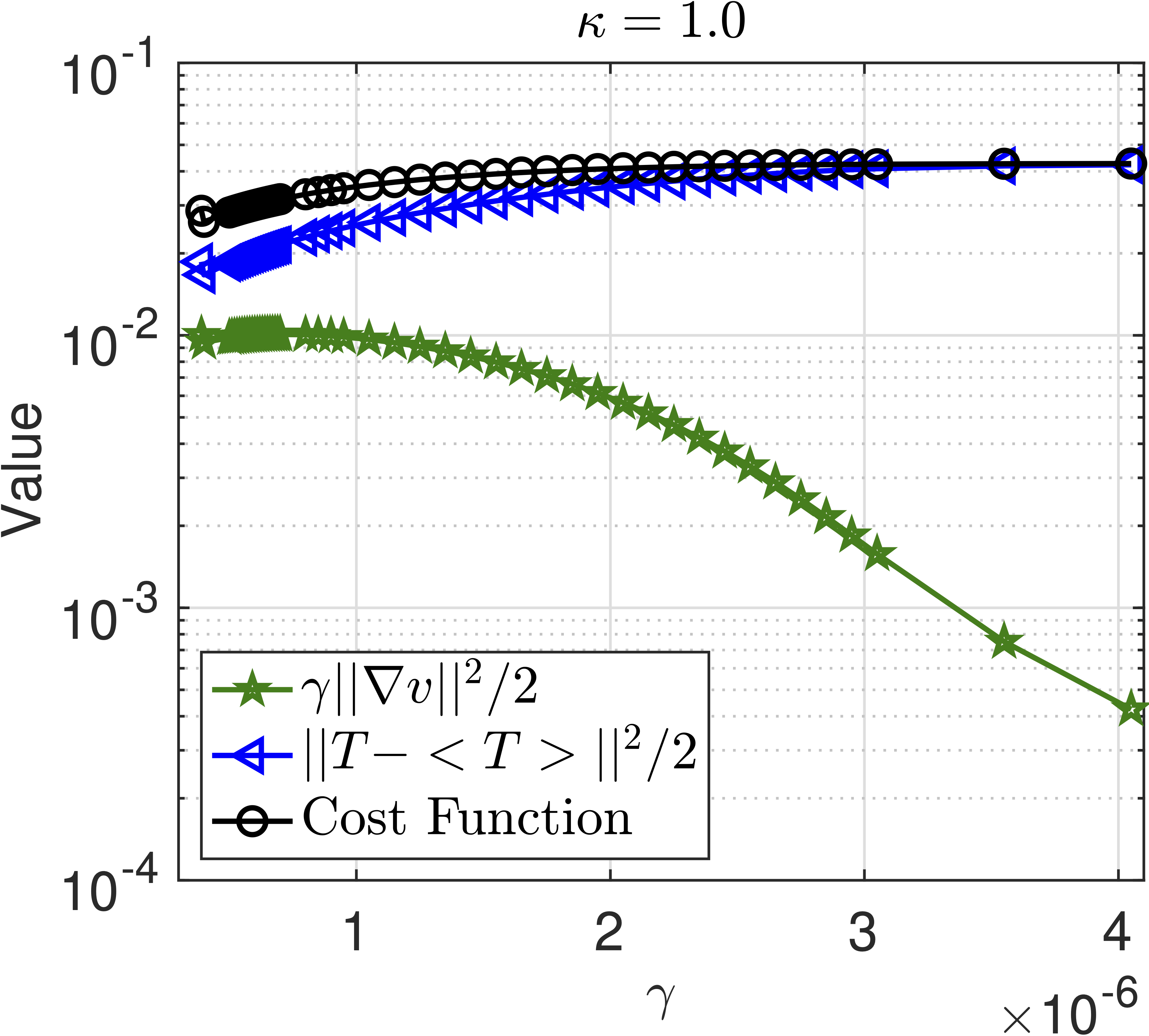}
&\includegraphics[width=.45\textwidth]{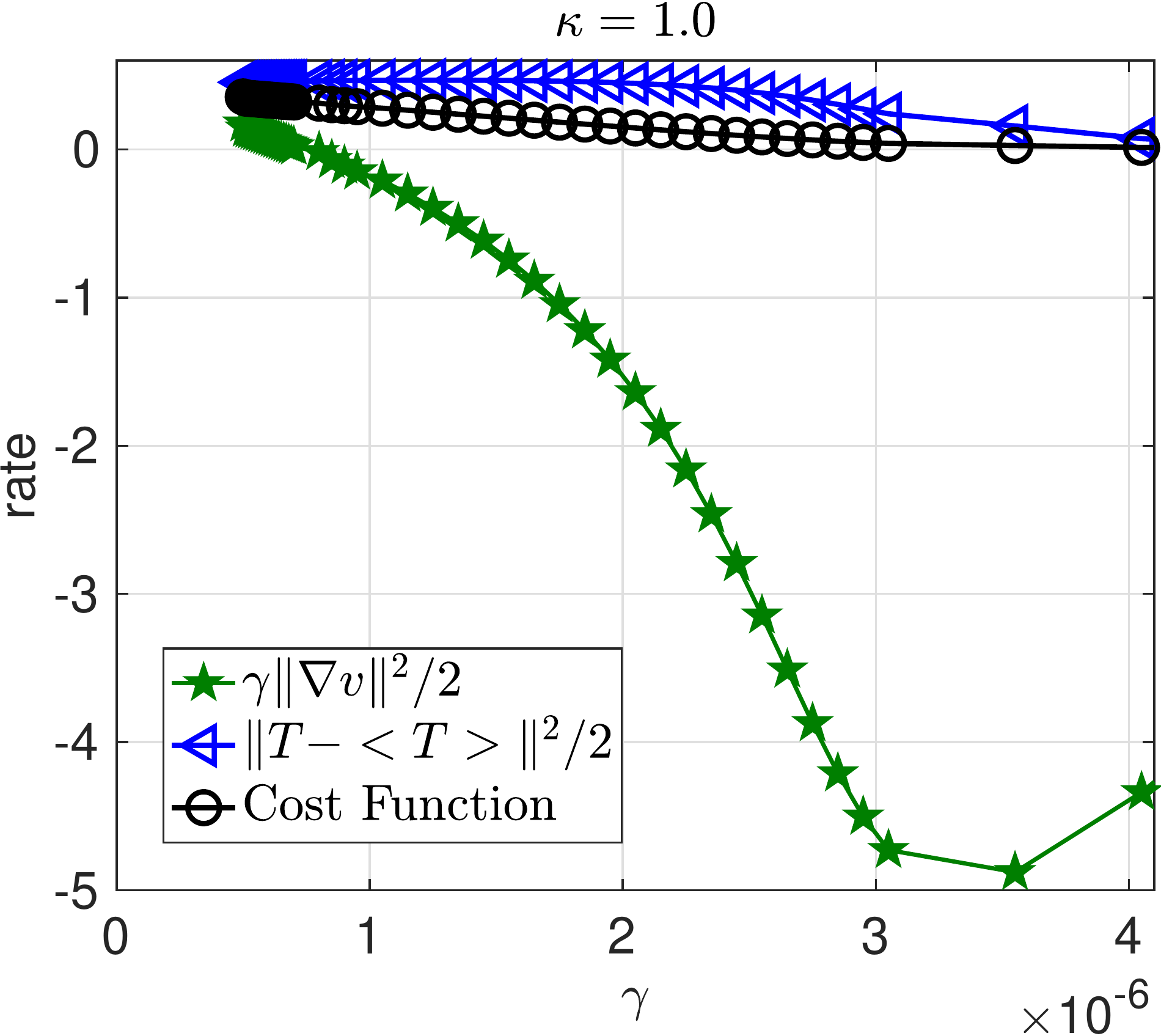}
\\
(a) & (b) 
\end{tabular}
\caption{Example~\ref{DivFree-Test1}: Illustration of results for $\kappa= 1.0$ 
(a). Plot of profiles in the cost functional with respect to $\gamma$ (here $\|T_h^0-\langle T_h^0\rangle\|^2/2 = $4.287E-2); 
(b). Convergence  rates $r_J, r_T$ and $r_{\bv}$ computed by \eqref{r_J}--\eqref{r_v}}. \label{fig:divfree-1-Conv}
\end{figure}
Next, we investigate  the behavior of the cost functional  with respect to $\gamma\in$[3.9E-7, \,4.1E-6]. 
In Fig.~\ref{fig:divfree-1-Conv}a, we plot the cost values versus various $\gamma$ values. It  shows that smaller values in $\gamma$ lead to  smaller cost functional  values. When $\gamma = 4$E-7, we obtain $J_{\min} = 2.60$E-2, which is $39\%$ smaller than the initial value (which is $4.287$E-2).  
In Fig.~\ref{fig:divfree-1-Conv}b, we plot the convergence rates $r_J, r_T$ and $r_{\bv}$ computed by \eqref{r_J}--\eqref{r_v}.
In particular, it can be seen that the convergence rate $r_{J}$ gradually decreases from $0.35$ to almost $0$ as  increasing the values in $\gamma$.

\begin{remark}
We have tested different $\kappa$ and mesh sizes  $h$ for Example \ref{DivFree-Test1} to demonstrate  the numerical robustness, where  different initial guesses for velocity  are also tested.  
 The numerical results are robust on $\kappa$ and refined $h$ for almost all $\gamma$ in the active region.    To reduce the redundancy of the figures, they are  omitted  in the paper.  However, the performance is slightly different when  $\gamma$ is close to its lower limit. This is likely due to the fact that  the continuous problem may fail to have the existence of an optimal control  when $\gamma=0$. In this case, the cost functional loses its coercivity in the control input. 
 \end{remark}

\begin{example}\label{sect:divfree-2}
In this example, we consider an asymmetric  distribution of the hear source.
Let 
\[ f(x,y) = 1000((x-0.5)^2+(y-0.75)^2)x(1-x)y(1-y).\] 
\end{example}
\begin{figure}[H]
\centering
\begin{tabular}{cc}
\includegraphics[width=.35\textwidth]{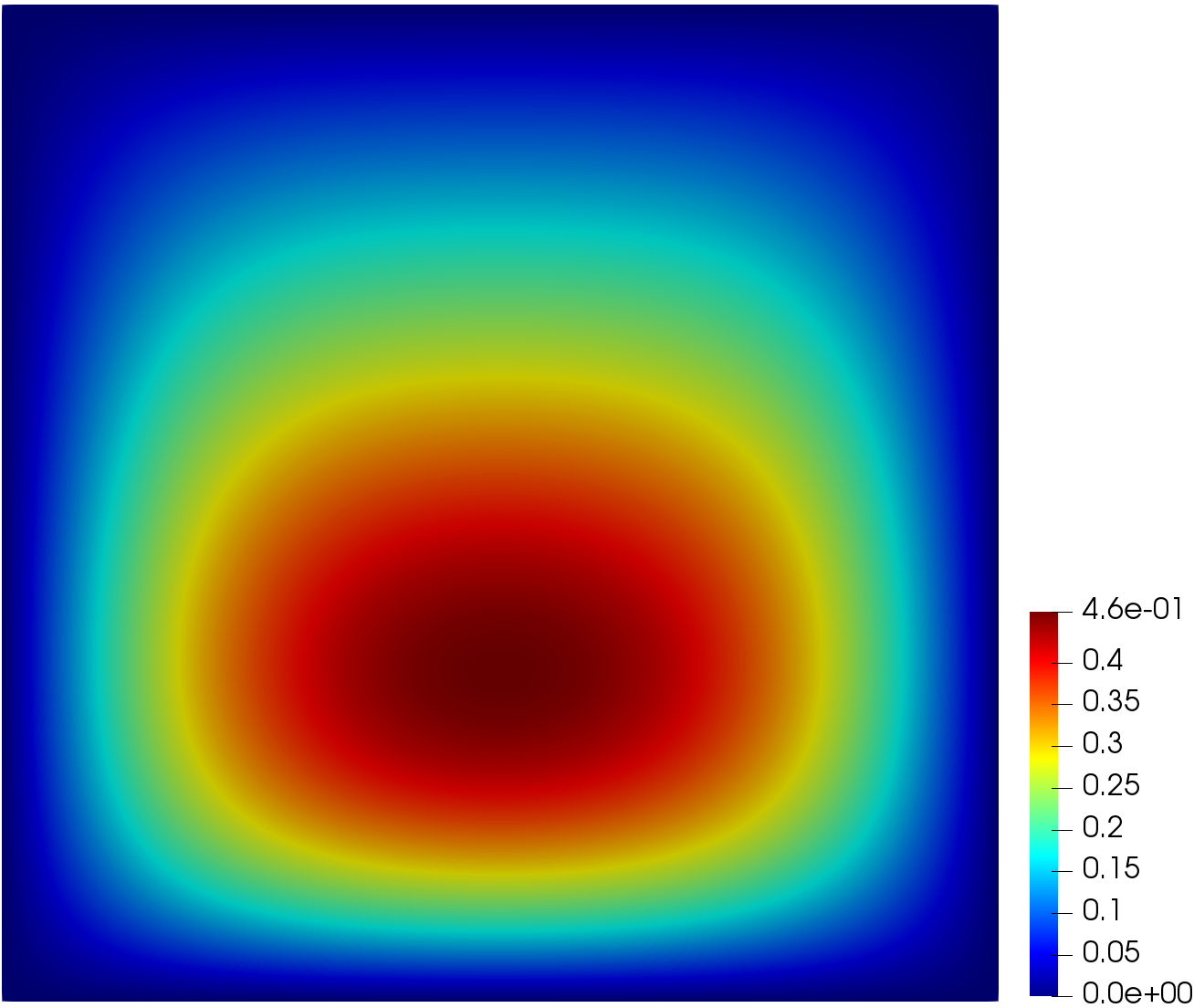}
& \includegraphics[width=.35\textwidth]{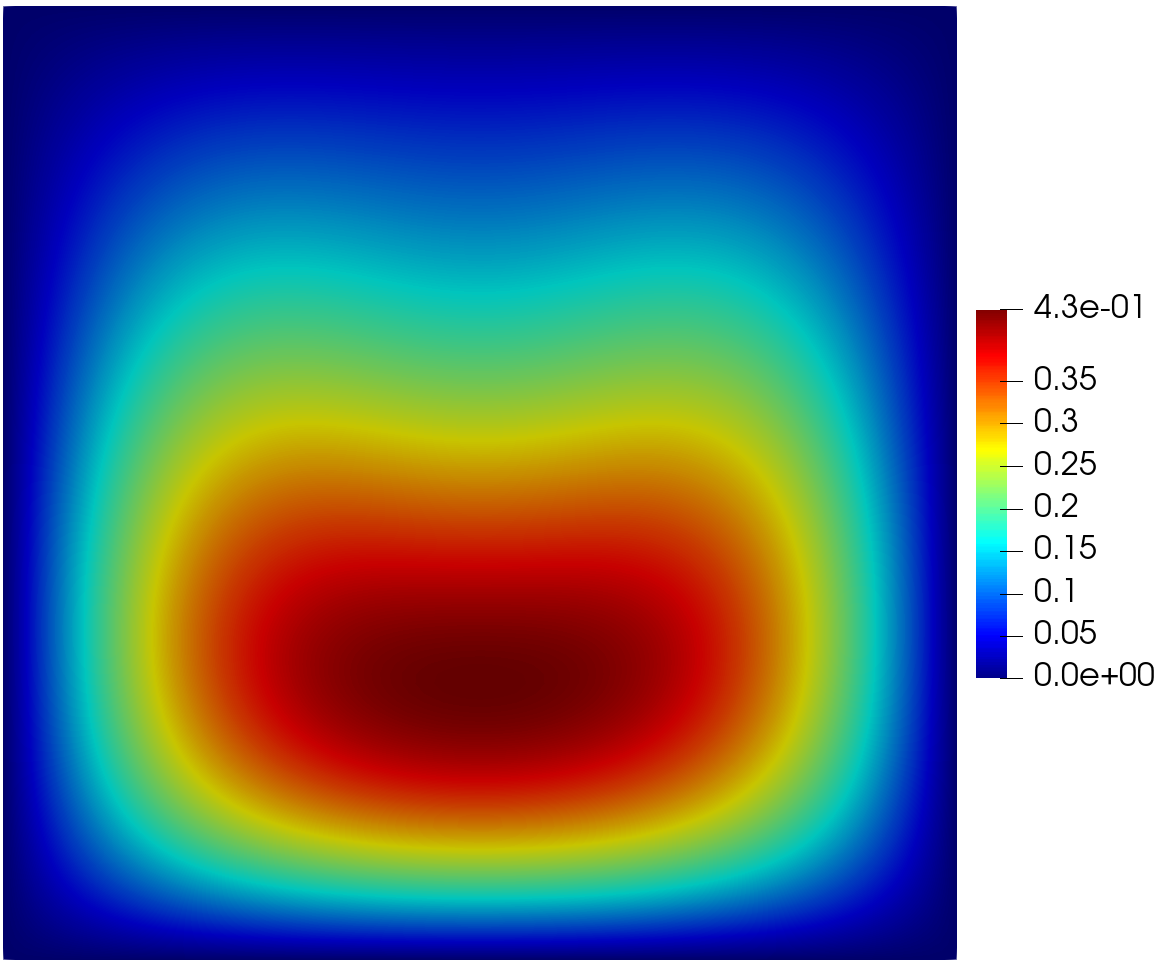}\\
(a) & (b)  \\
\includegraphics[width=.35\textwidth]{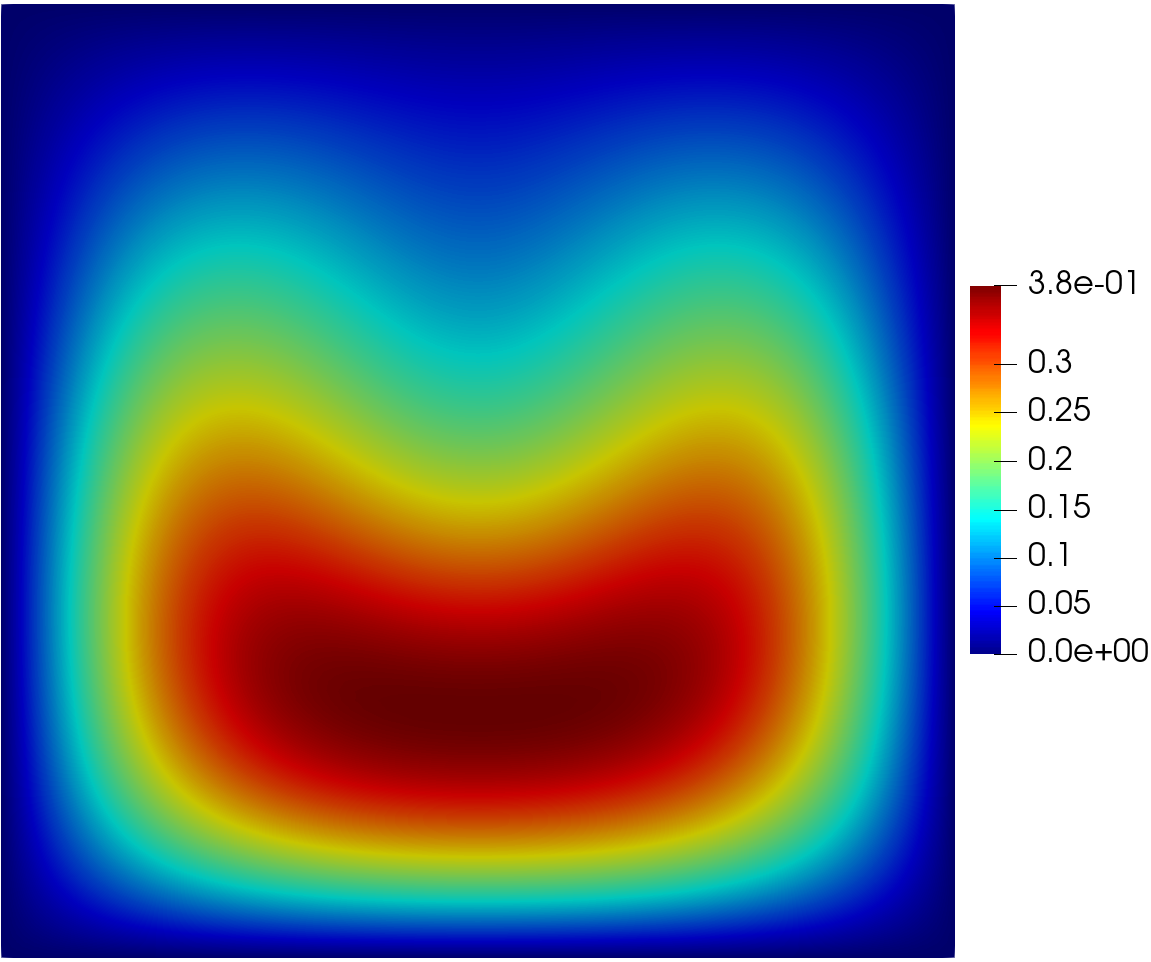}
&\includegraphics[width=.35\textwidth]{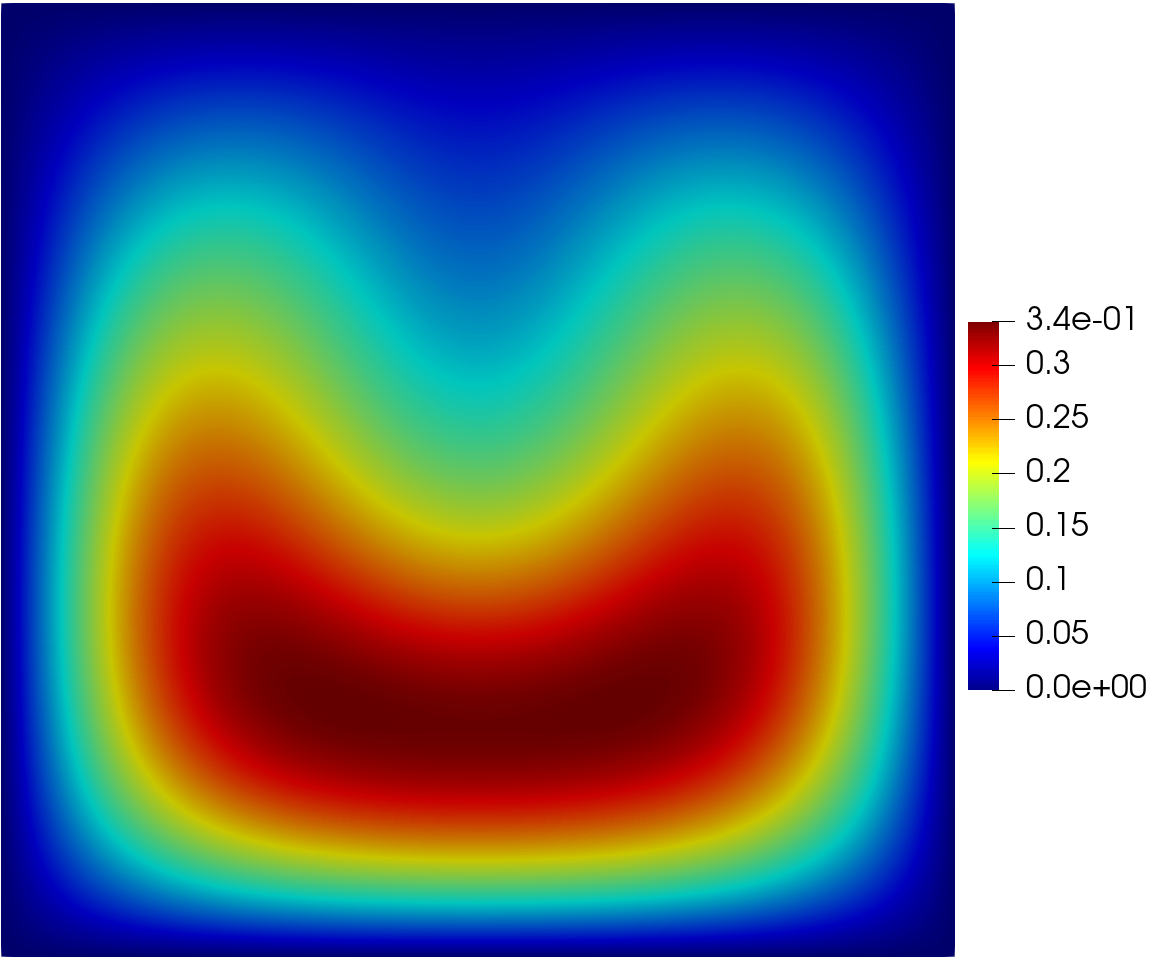}
\\
 (c) &(d)\\
\end{tabular}
\caption{Example~\ref{sect:divfree-2}: Plots of optimal $T_h$  for $\kappa = 1.0$ and  (a). Initial heat distribution $T_h^0$; (b). $\gamma = 1.8$E-6; (c). $\gamma = 8$E-7; (d). $\gamma = 4$E-7.}\label{fig:divfree-2-T}
\end{figure}

The initial heat distribution corresponding to $\gamma = 1.0$ and $\bv=0$ is plotted in Fig.~\ref{fig:divfree-2-T}a. As shown in this figure, the maximum of $T_h^0$ is 4.6E-1. 
The optimal heat distributions corresponding various values in $\gamma$ are  plotted in  Fig.~\ref{fig:divfree-2-T}b-c. 
We observe similar results as in Example \ref{DivFree-Test1}, i.e., the smaller value in $\gamma$ will yield the lower maximum  of the optimal temperature.

 The optimal vector fields and  their streamlines are demonstrated in Fig.~\ref{fig:divfree-2-V}-\ref{fig:divfree-2-V-Stream}.
The profiles of the cost functional are plotted in Fig.~\ref{fig:divfree-2-Conv}. For $\gamma = 4$E-7, we obtain the cost functional value $J_{\min}$ = 6.76E-3, which is 25\% smaller than the initial value (which is $8.97$E-3). 
In this case, we observe that the convergence rate $r_J$ gradually decreases from $0.22$ to almost $0$.

\begin{figure}[H]
\centering
\begin{tabular}{ccc}
\includegraphics[width=.3\textwidth]{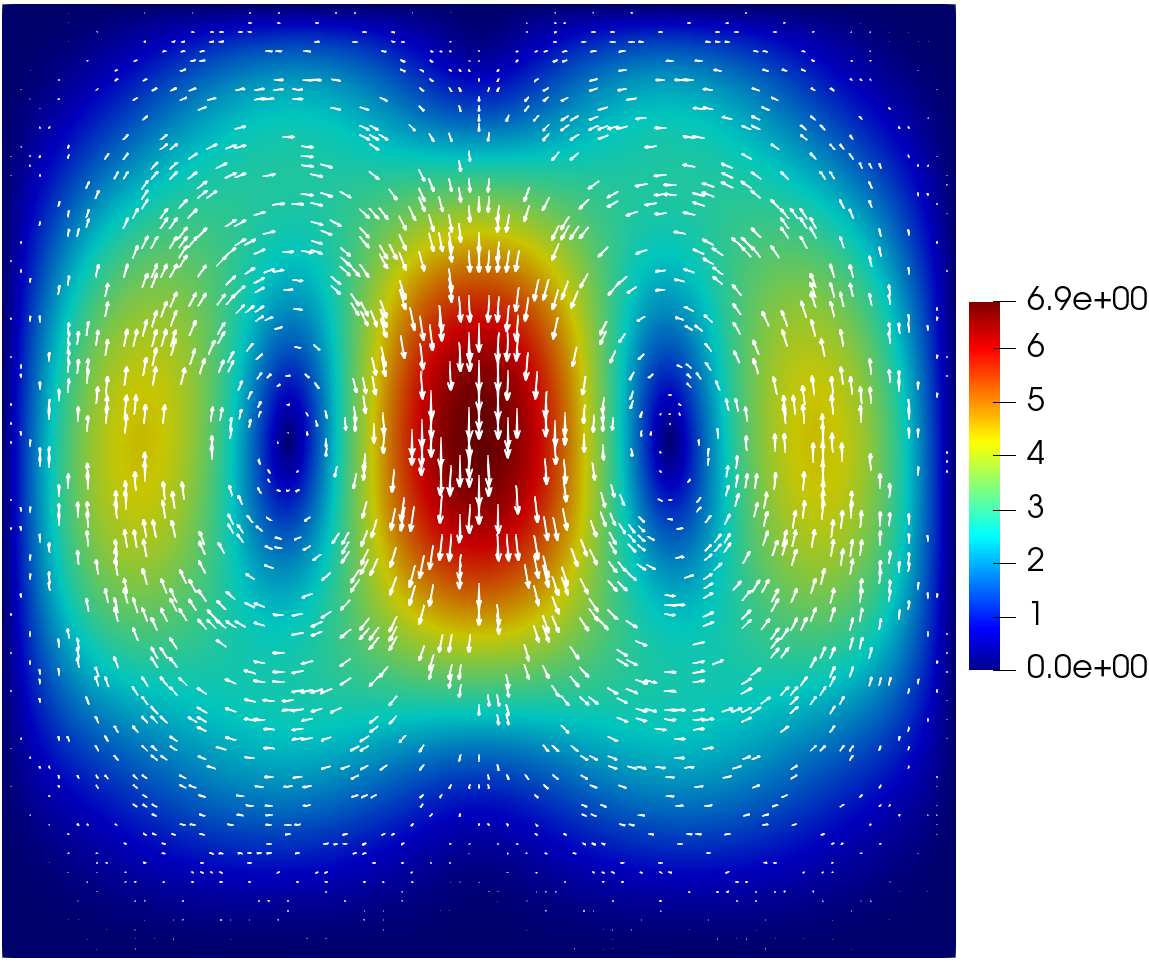}
&\includegraphics[width=.3\textwidth]{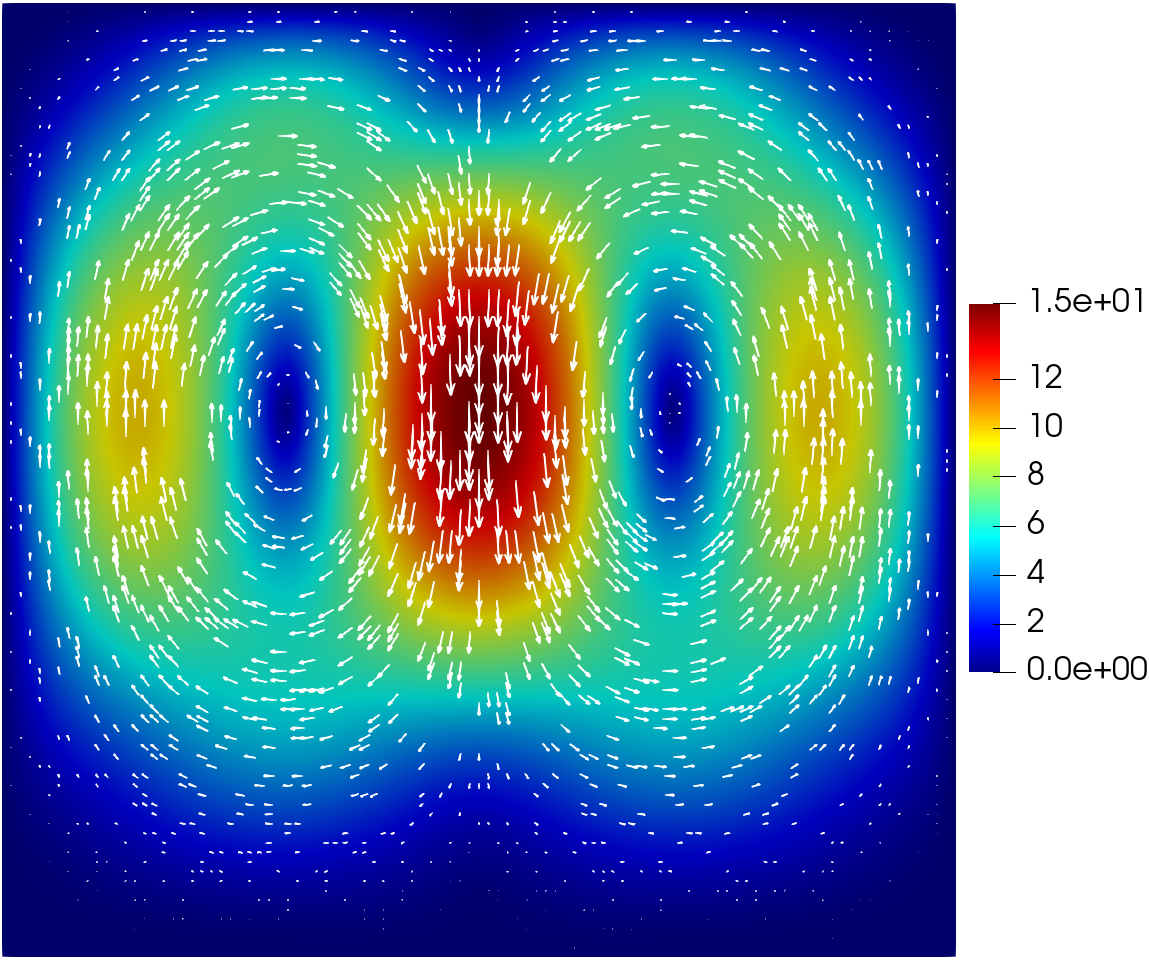}
&\includegraphics[width=.3\textwidth]{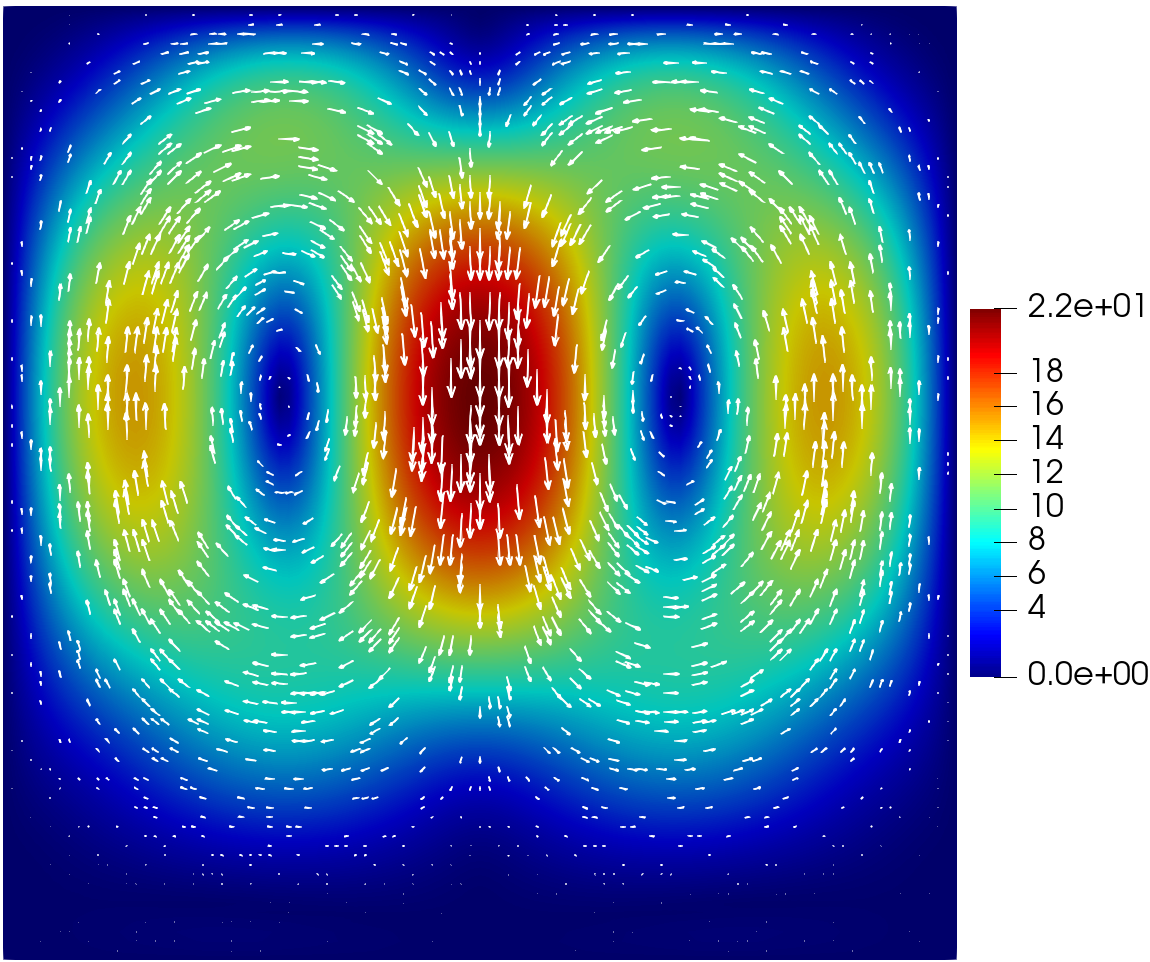}
\\
(a) & (b) &(c)
\end{tabular}
\caption{Example~\ref{sect:divfree-2}: Plots of temperature $T_h$ and vector field ${\bv}$ for $\kappa = 1.0$ and (a) $\gamma = 1.8$E-6; (b)$\gamma = 8$E-7; (c). $\gamma = 4$E-7. Here, the color illustrates the magnitude of velocity $\bv_h$ and the vector plots the field of $\bv_h$.}\label{fig:divfree-2-V}
\end{figure}

\begin{figure}[H]
\centering
\begin{tabular}{ccc}
\includegraphics[width=.3\textwidth]{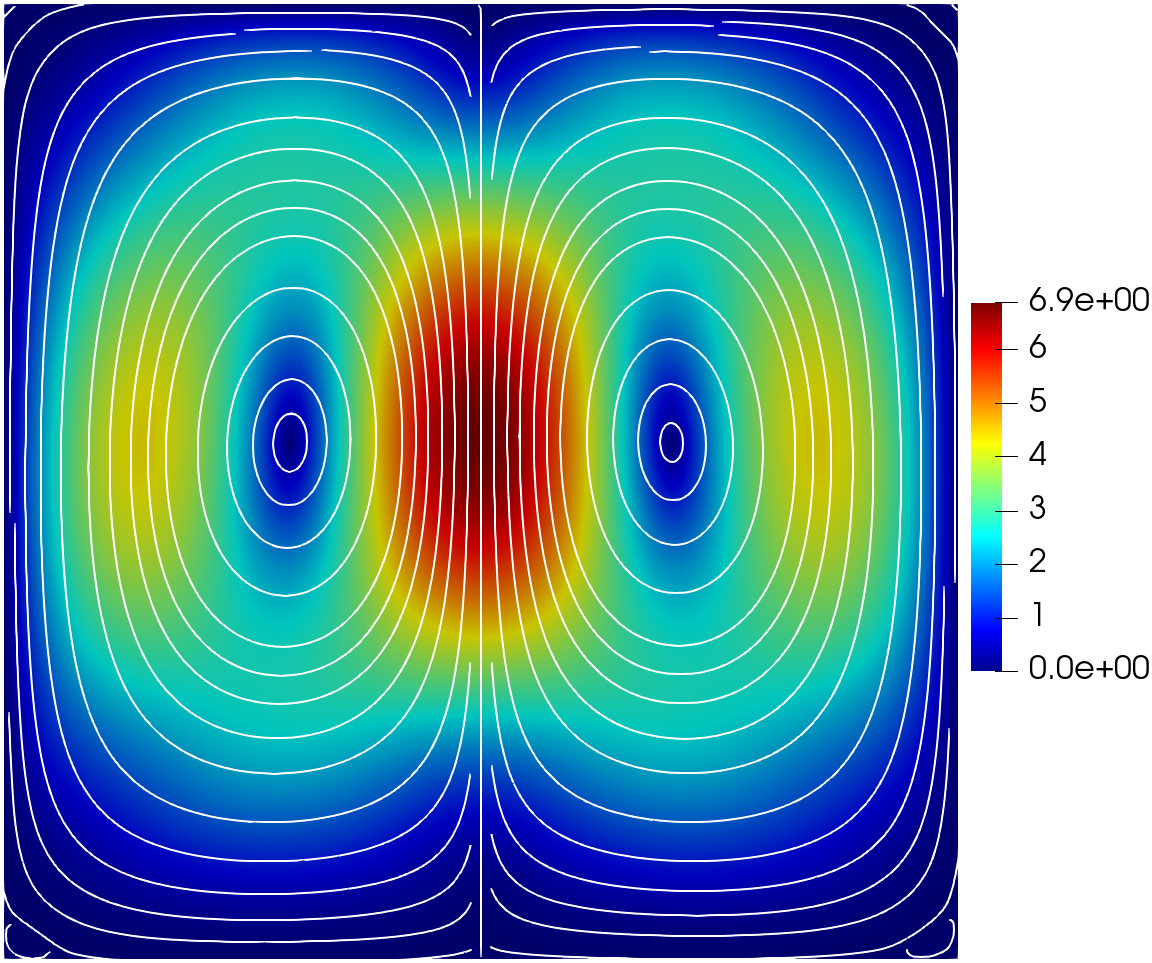}
&\includegraphics[width=.3\textwidth]{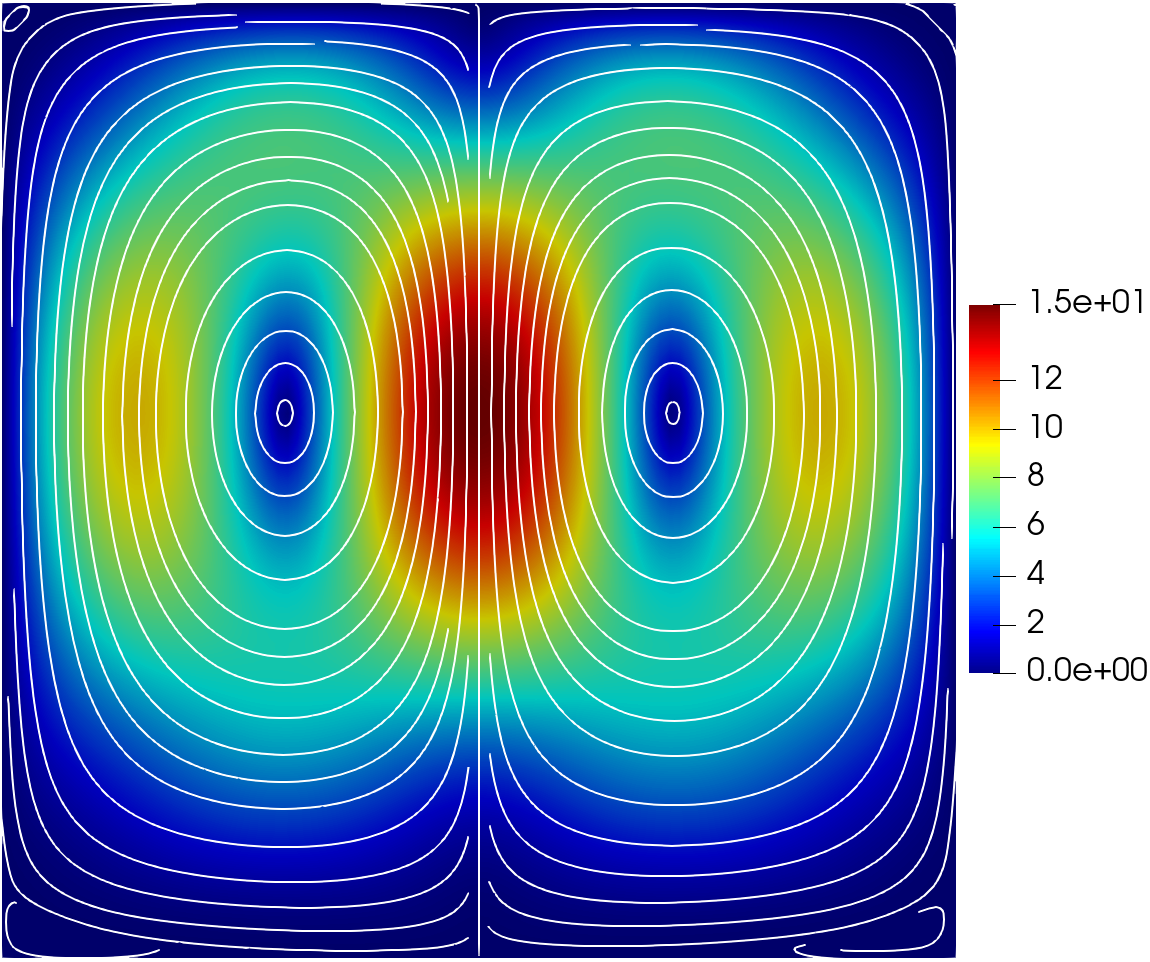}
&\includegraphics[width=.3\textwidth]{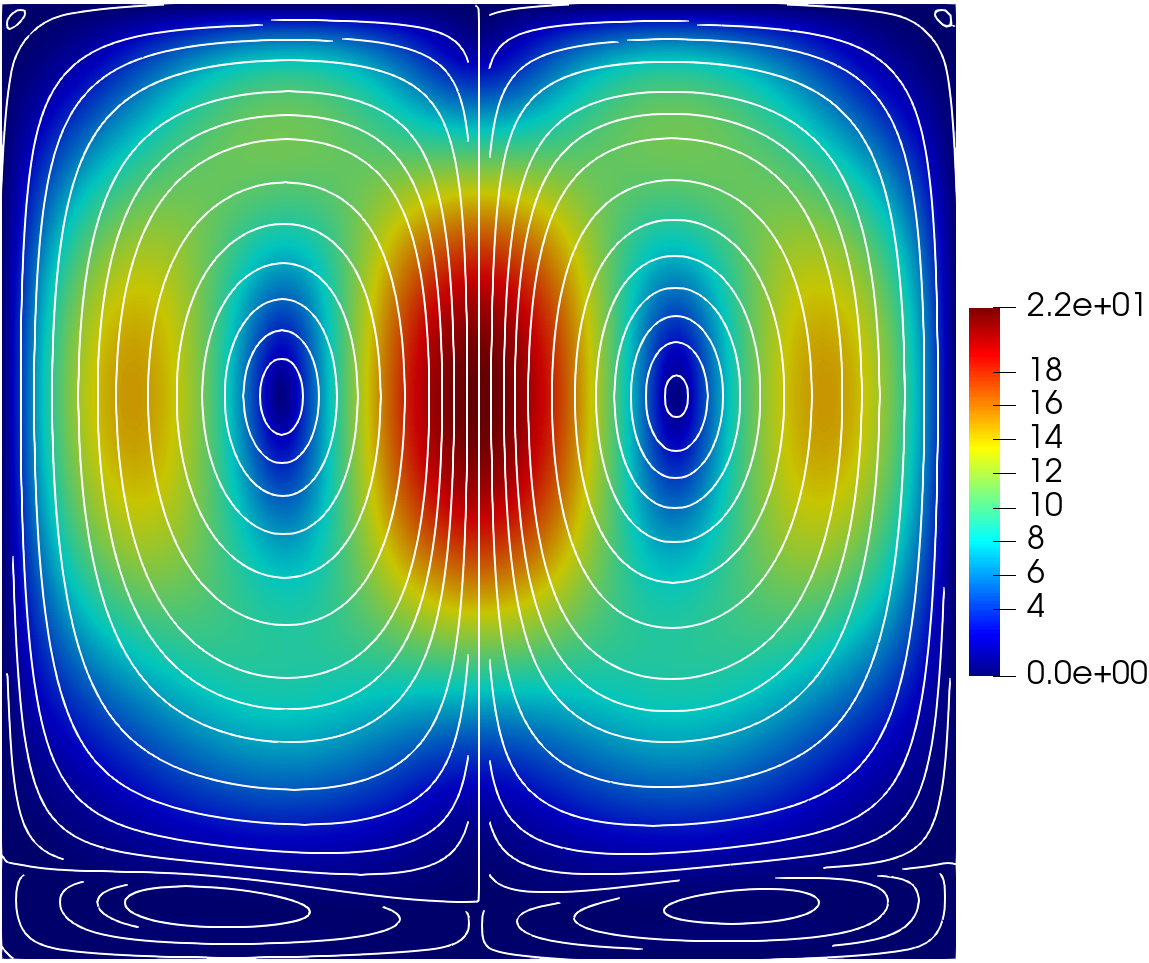}
\\
(a) & (b) &(c)
\end{tabular}
\caption{Example~\ref{sect:divfree-2}: Plots of temperature $T_h$ and vector field ${\bv}$ for $\kappa = 1.0$ and (a) $\gamma = 1.8$E-6; (b)$\gamma = 8$E-7; (c). $\gamma = 4$E-7. Here, the color illustrates the magnitude of velocity $\bv_h$ and the curve plots the streamline of $\bv_h$.}\label{fig:divfree-2-V-Stream}
\end{figure}

\begin{figure}[H]
\centering
\begin{tabular}{cc}
\includegraphics[width=.45\textwidth]{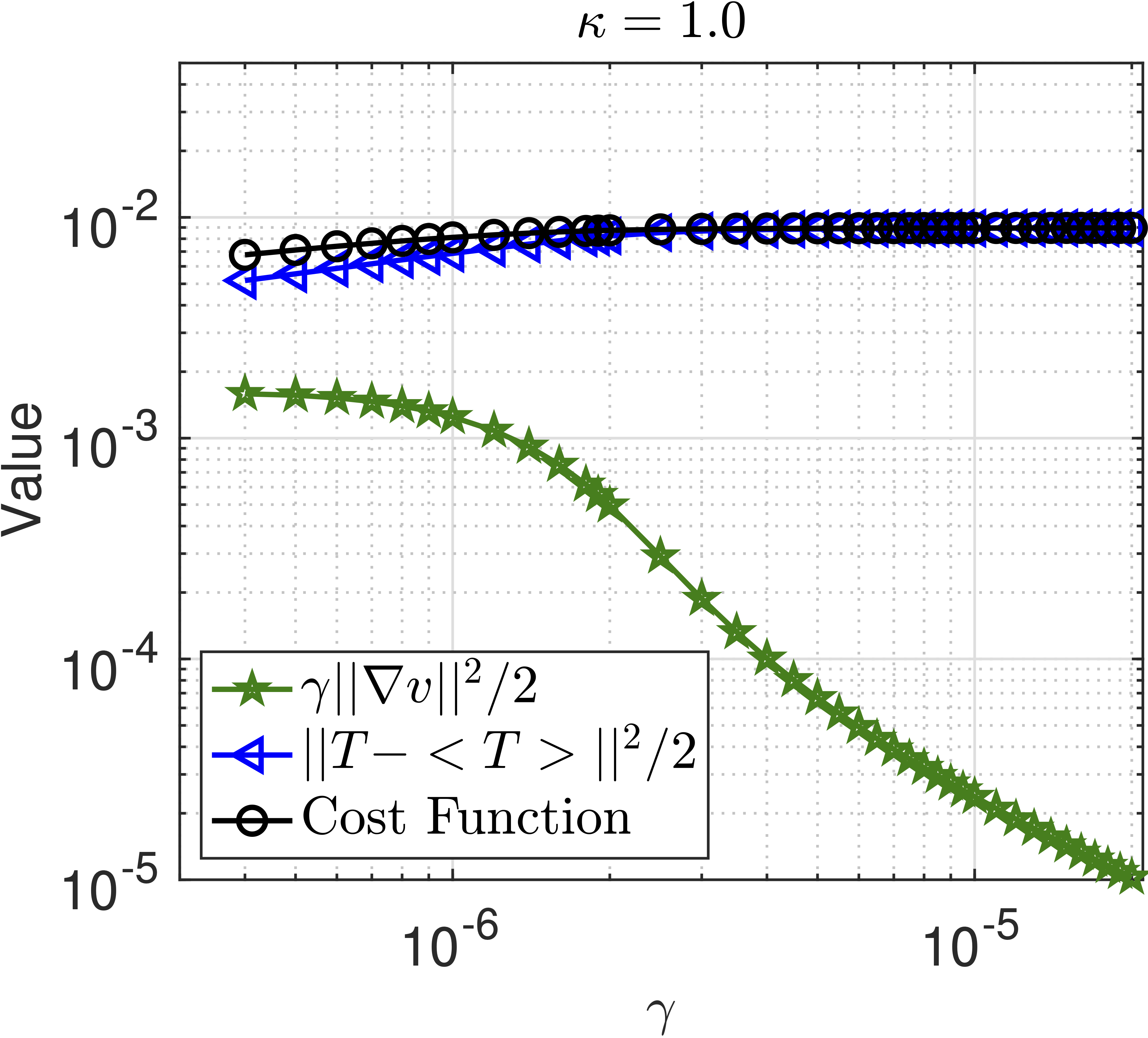}
&\includegraphics[width=.45\textwidth,height = .4\textwidth]{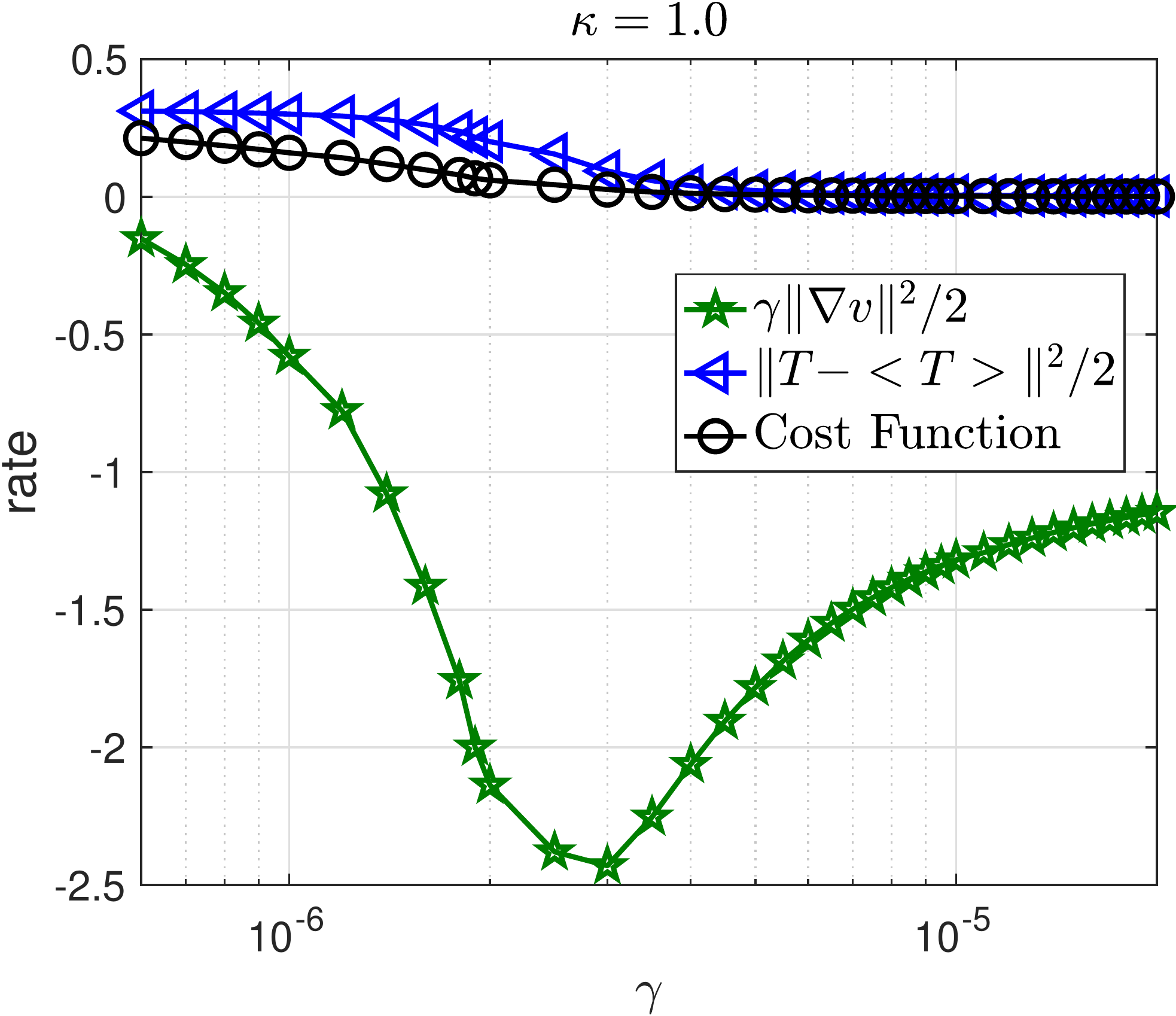}
\\
(a) & (b) 
\end{tabular}
\caption{Example~\ref{sect:divfree-2}: Illustration of results for $\kappa= 1.0$: 
(a). Plots of profiles in the cost functional with respect to $\gamma$ (here $\|T_h^0-\langle T_h^0\rangle\|^2/2 = 8.97$E-3); 
(b). Convergence  rates $r_J, r_T$ and $r_{\bv}$ computed by \eqref{r_J}--\eqref{r_v}}.\label{fig:divfree-2-Conv}
\end{figure}

\begin{example}\label{DivFree-Test3}
{In this example, we continue to  examine  an asymmetric  distribution of the heat source, where the heat source is  centered at the upper right corner.  We especially examine  the behavior of the velocity field subject to such a heat distribution with a sharp peak.
} Let
\[
	f(x,y) =100\exp(-100(x-0.75)^2-100(y-0.75)^2).
\]
\end{example}

\begin{figure}[H]
\centering
\begin{tabular}{cc}
\includegraphics[width=.35\textwidth]{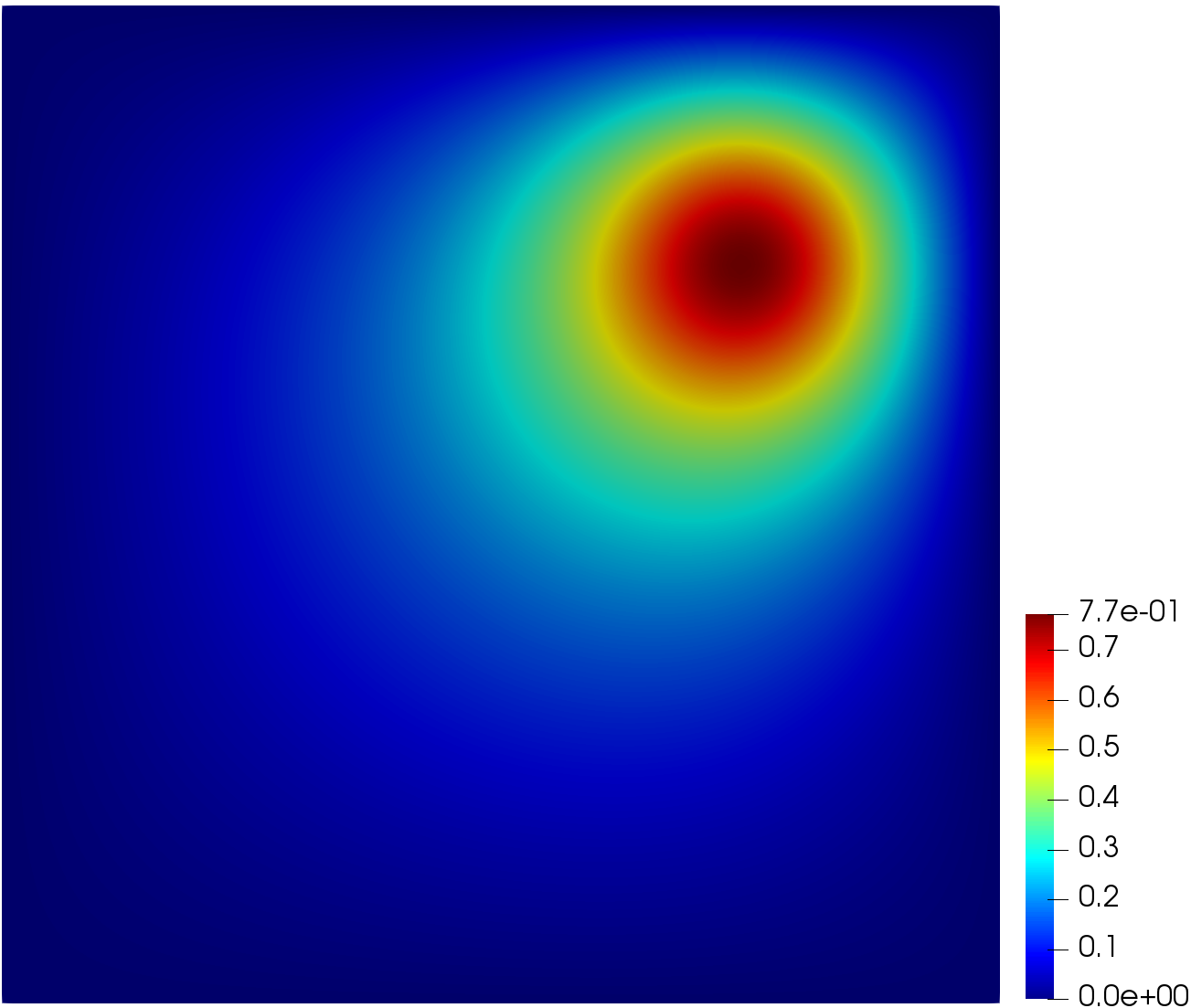}
&\includegraphics[width=.35\textwidth]{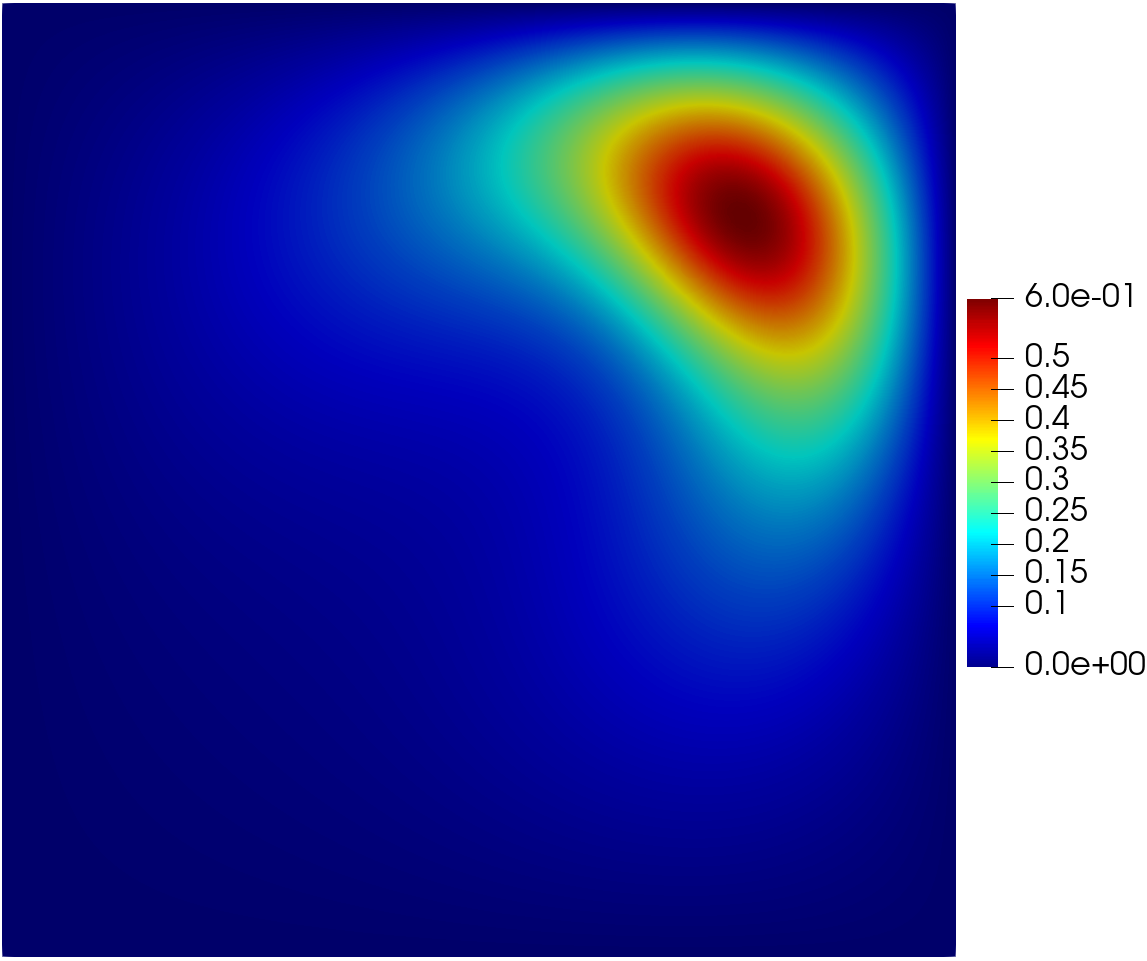}\\
(a) & (b)\\
\includegraphics[width=.35\textwidth]{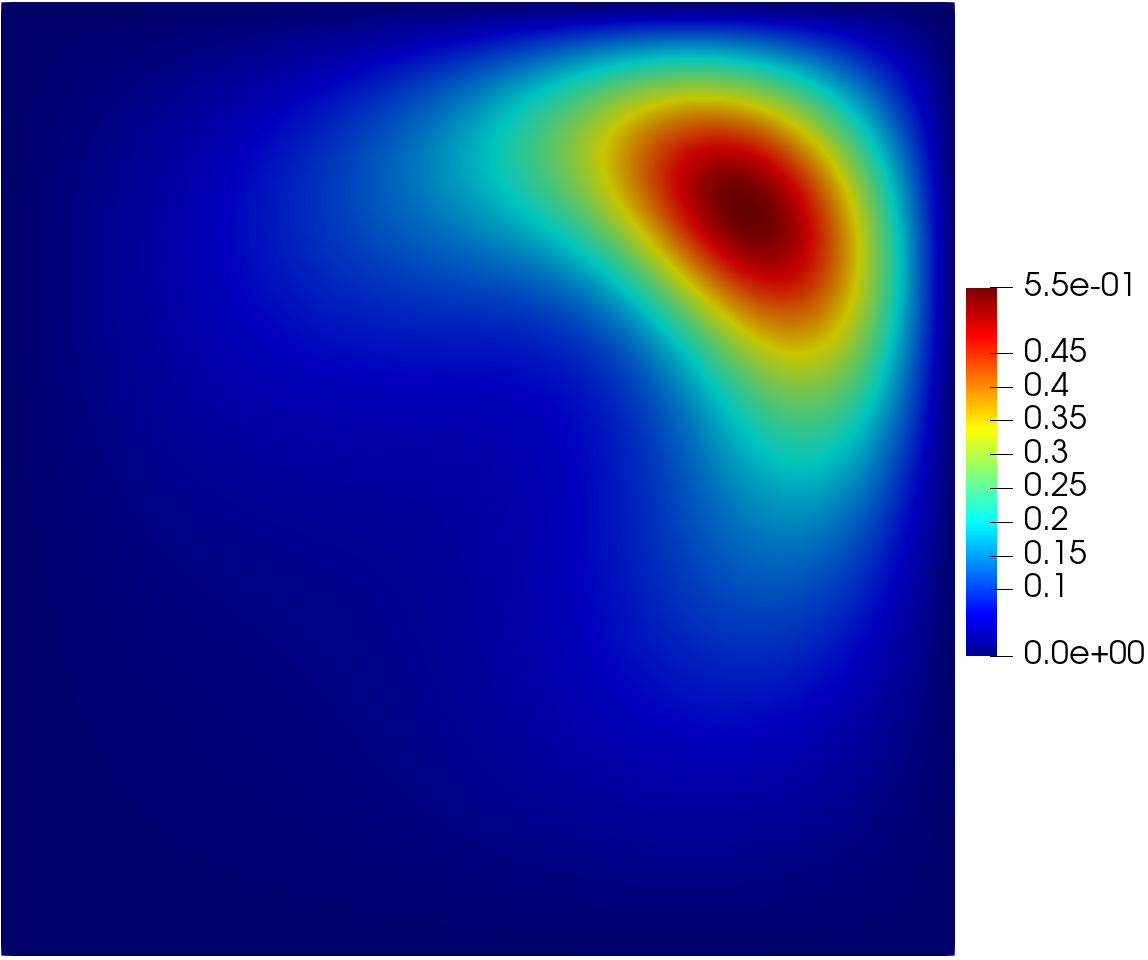}
&\includegraphics[width=.35\textwidth]{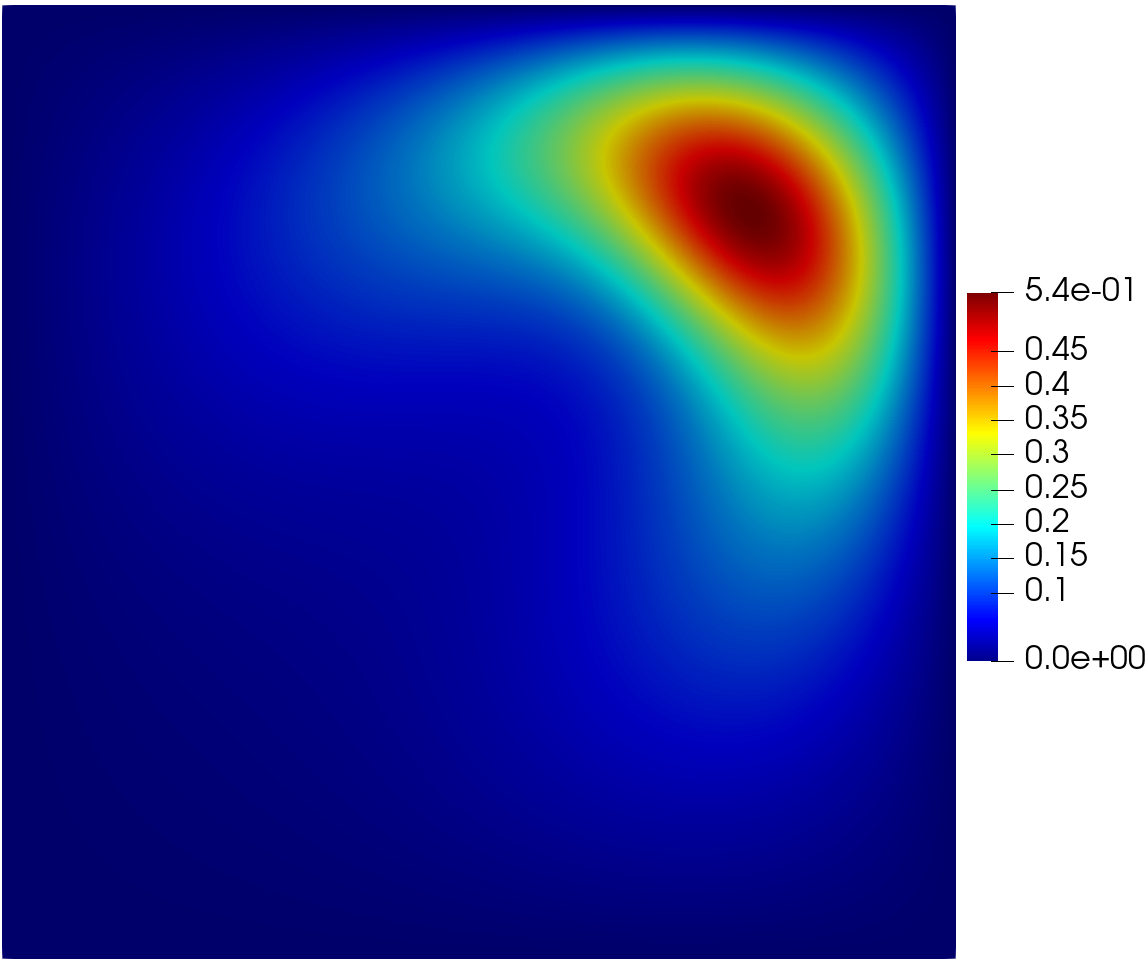}
\\
(c) & (D)\\
\end{tabular}
\caption{Example~\ref{DivFree-Test3}: Plots of optimal $T_h$  with $\kappa = 1.0$ of (a). Initial heat distribution $T_h^0$; and (b). $\gamma = 6$E-7; (c). $\gamma = 3.7$E-7; (d). $\gamma = 3.3$E-7.}\label{fig:divfree-3-T}
\end{figure}

The initial heat distribution corresponding to $\gamma = 1.0$ and $\bv=0$ is plotted in Fig.~\ref{fig:divfree-3-T}a.
As shown in this figure, the maximum of $T_h^0$ is 7.7E-1. 
The numerical optimal solutions for heat distribution $T_h$ are plotted in Fig.~\ref{fig:divfree-3-T} for $\gamma =$6E-7, 3.7E-8, and 3.3E-8. As we  can observe in Fig.~\ref{fig:divfree-3-Conv}a,  the maximum value   of the heat distribution  is reduced from $\max T_h^0 = 0.77$ to $\max T_h = 0.6$, $\max T_h = 0.55$, and $\max T_h=0.54$ corresponding to $\gamma$ =6E-7, 3.7E-7, and 3.3E-7, respectively. Similar to former examples, smaller value in $\gamma$ indicates a more effective cooling process.

{Fig.~\ref{fig:divfree-3-V}-\ref{fig:divfree-3-V-Stream} illustrate the velocity fields and the corresponding streamlines. Based on the direction  fields we observe that for each case the velocity tends to ``blow" the heat source further to the upper right corner, however due to divergence-free, the heat distribution is stretched toward to the cooler region. 
For this example, the velocity fields associated with  different values of $\gamma$  also share  a similar pattern. The profiles of the cost functional  are plotted in Fig.~\ref{fig:divfree-3-Conv}. For $\gamma$ = 3.3E-7, we obtain the cost function value $J_{\min}$ = 7.74E-3, which is 38\% smaller than the initial value (1.24E-2). 
In this case, we find that the convergence rate $r_J$ gradually decreases from $0.29$ to almost $0$. }


\begin{figure}[H]
\centering
\begin{tabular}{ccc}
\includegraphics[width=.3\textwidth]{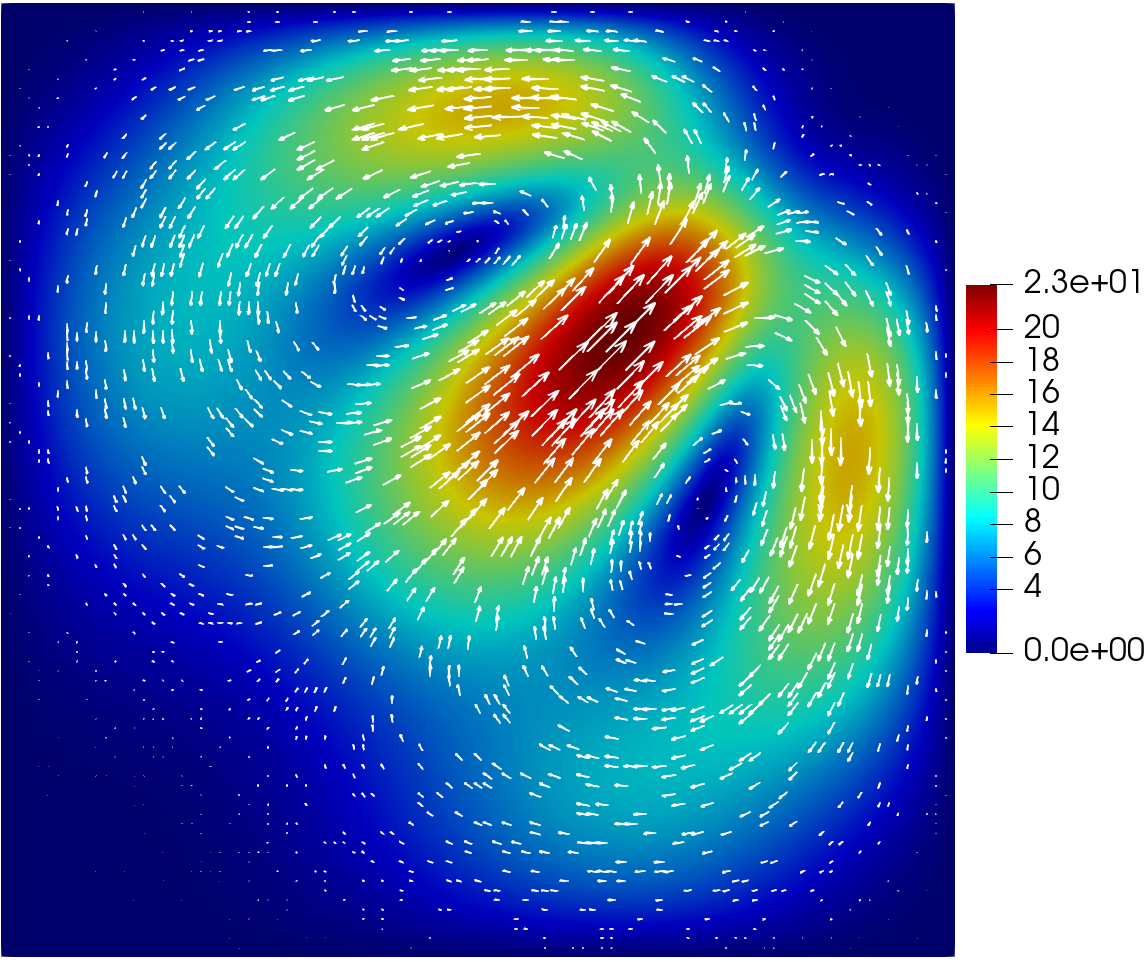}
&\includegraphics[width=.3\textwidth]{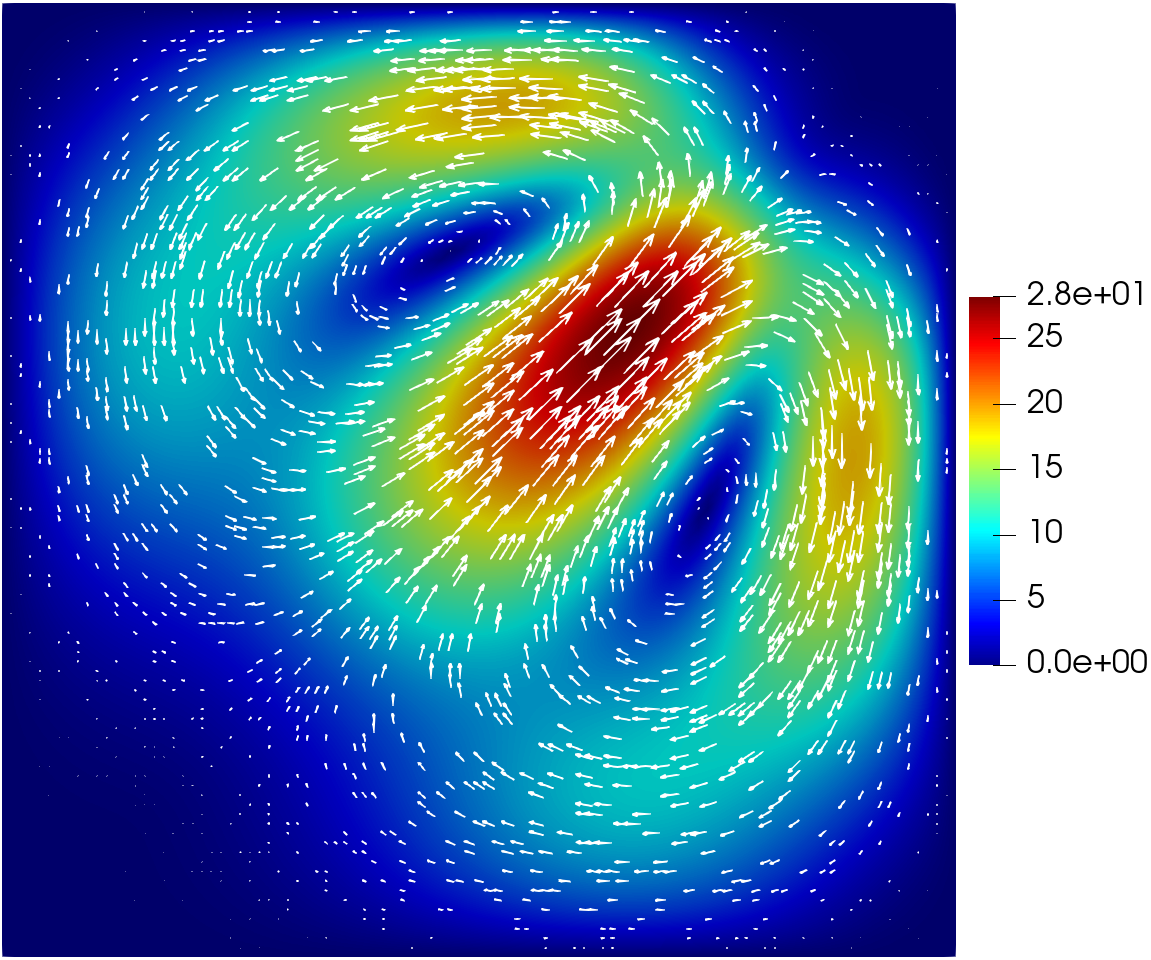}
&\includegraphics[width=.3\textwidth]{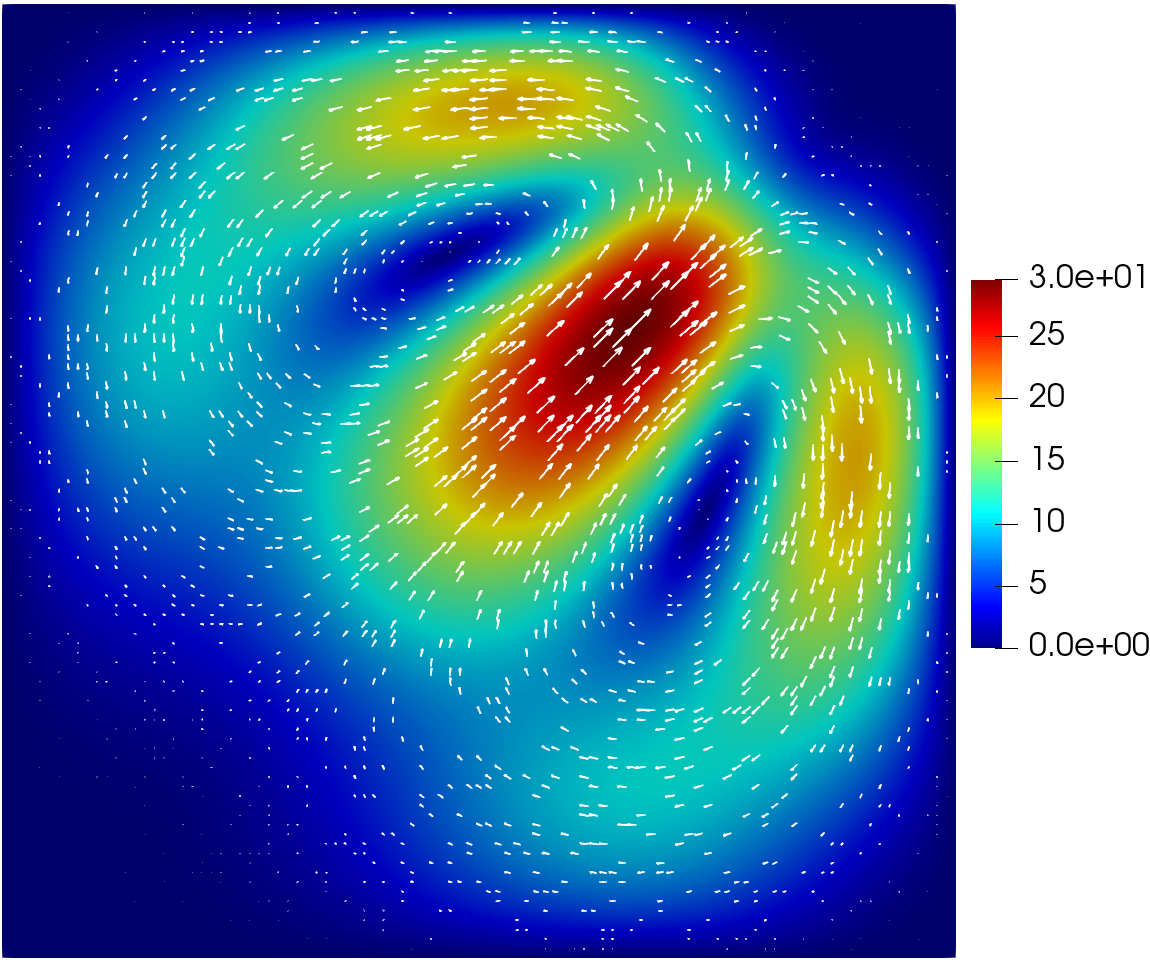}
\\
(a) & (b) &(c)
\end{tabular}
\caption{Example~\ref{DivFree-Test3}: Plots of optimal $\bv_h$  for $\kappa = 1.0$ and (a). $\gamma = 6$E-7; (b). $\gamma = 3.7$E-7; (c). $\gamma = 3.3$E-7. Here, the color illustrates the magnitude of velocity $\bv_h$ and the vector plots the field of $\bv_h$.}\label{fig:divfree-3-V}
\end{figure}

\begin{figure}[H]
\centering
\begin{tabular}{ccc}
\includegraphics[width=.3\textwidth]{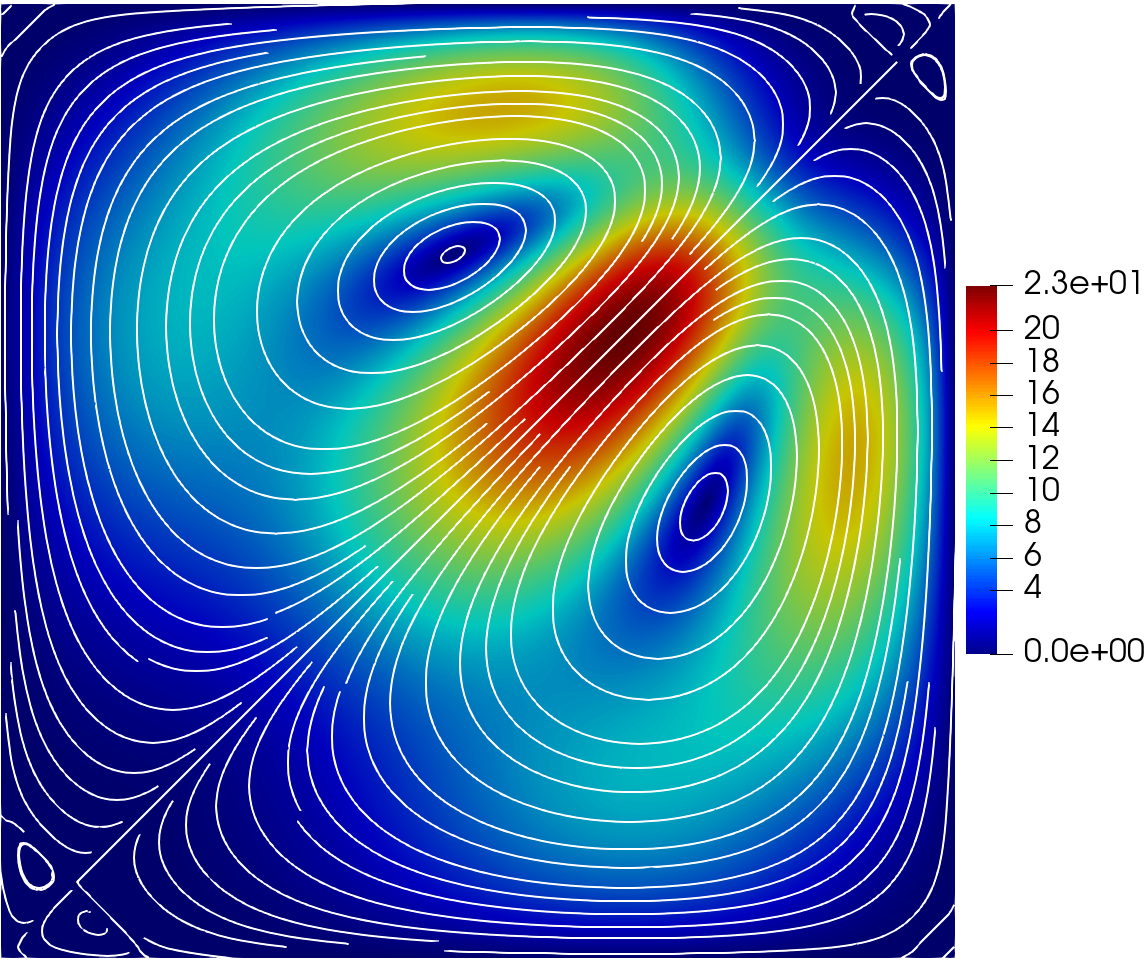}
&\includegraphics[width=.3\textwidth]{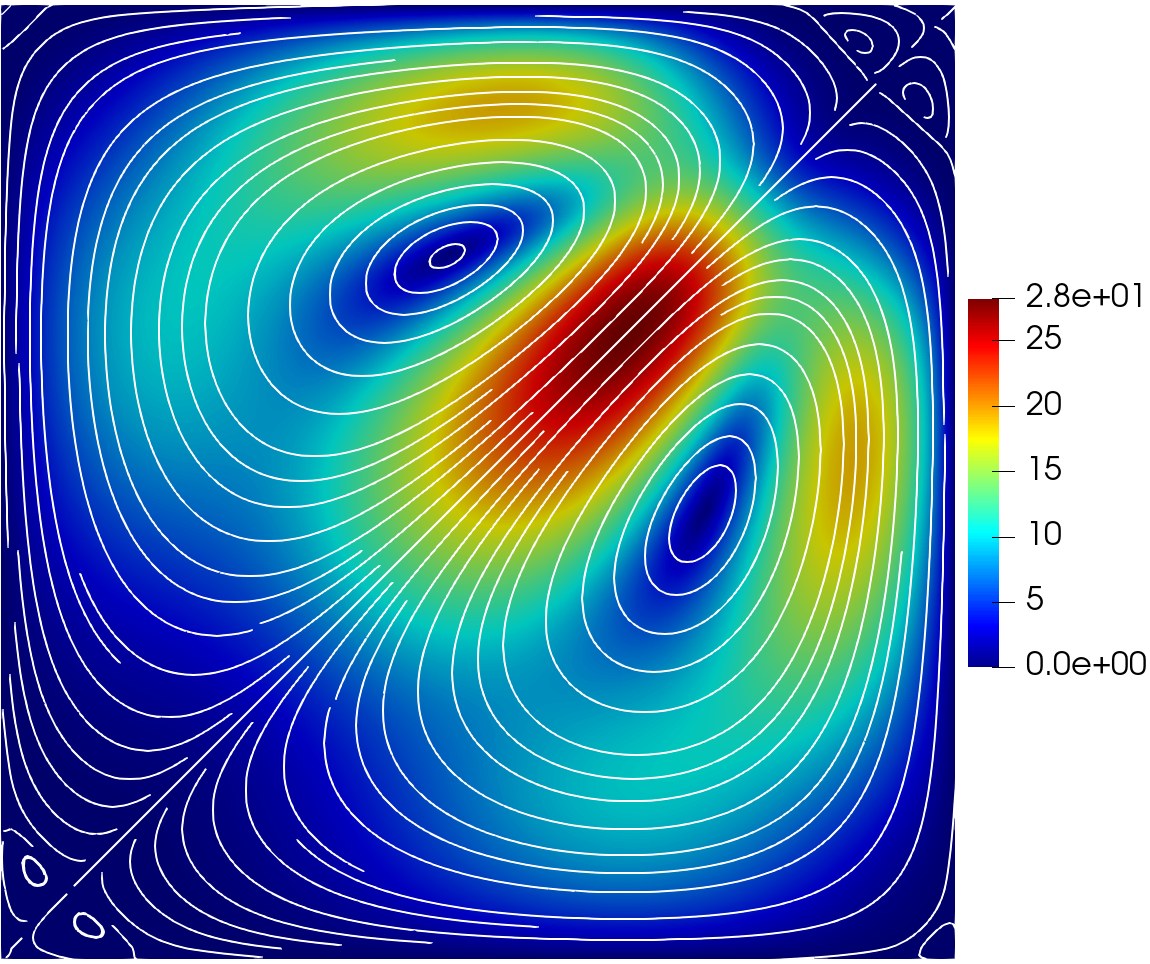}
&\includegraphics[width=.3\textwidth]{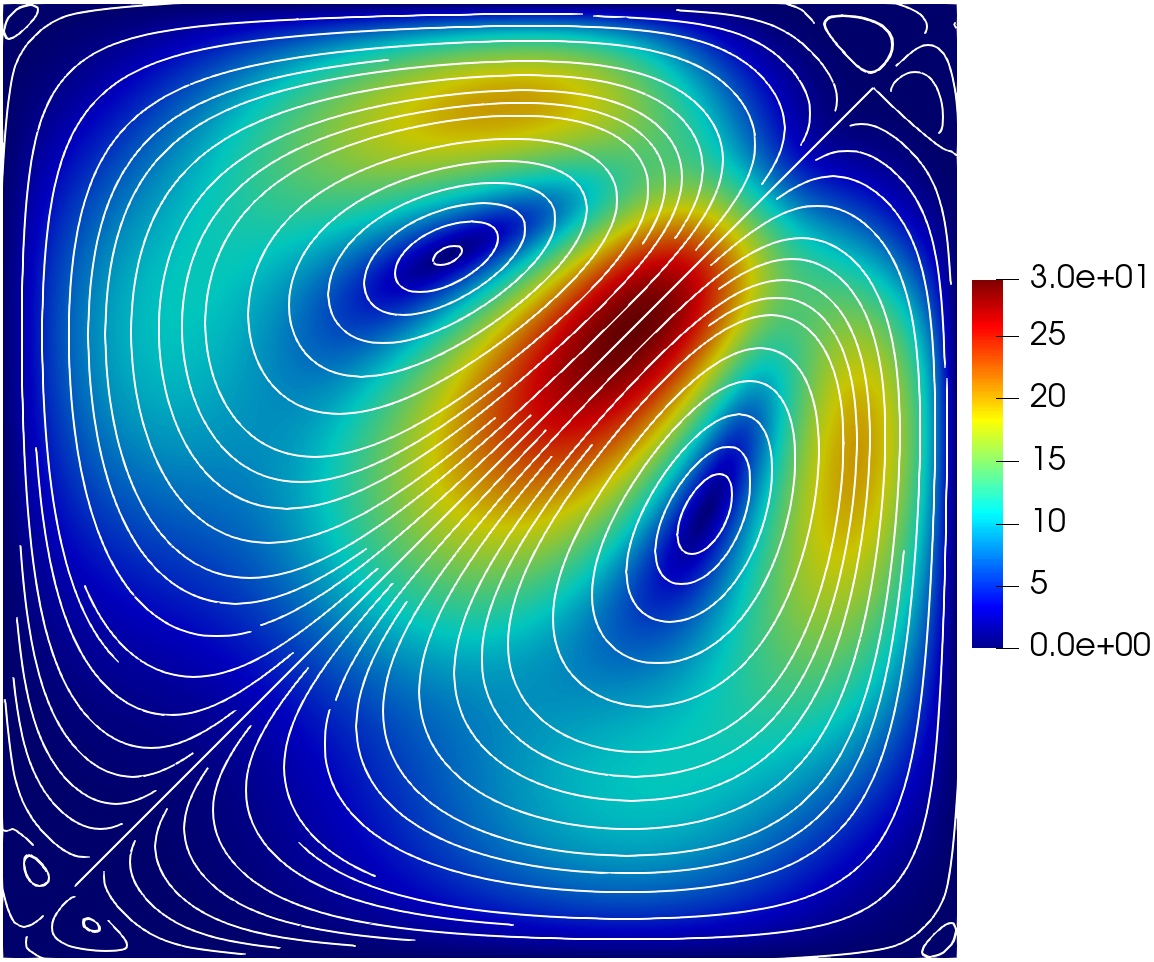}
\\
(a) & (b) &(c)
\end{tabular}
\caption{Example~\ref{DivFree-Test3}: Plots of optimal $T_h$  for $\kappa = 1.0$ and (a). $\gamma = 6$E-7; (b). $\gamma = 3.7$E-7; (c). $\gamma = 3.3$E-7. Here, the color illustrates the magnitude of velocity $\bv_h$ and the curve plots the streamline of $\bv_h$.}\label{fig:divfree-3-V-Stream}
\end{figure}

\begin{figure}[H]
\centering
\begin{tabular}{cc}
\includegraphics[width=.45\textwidth]{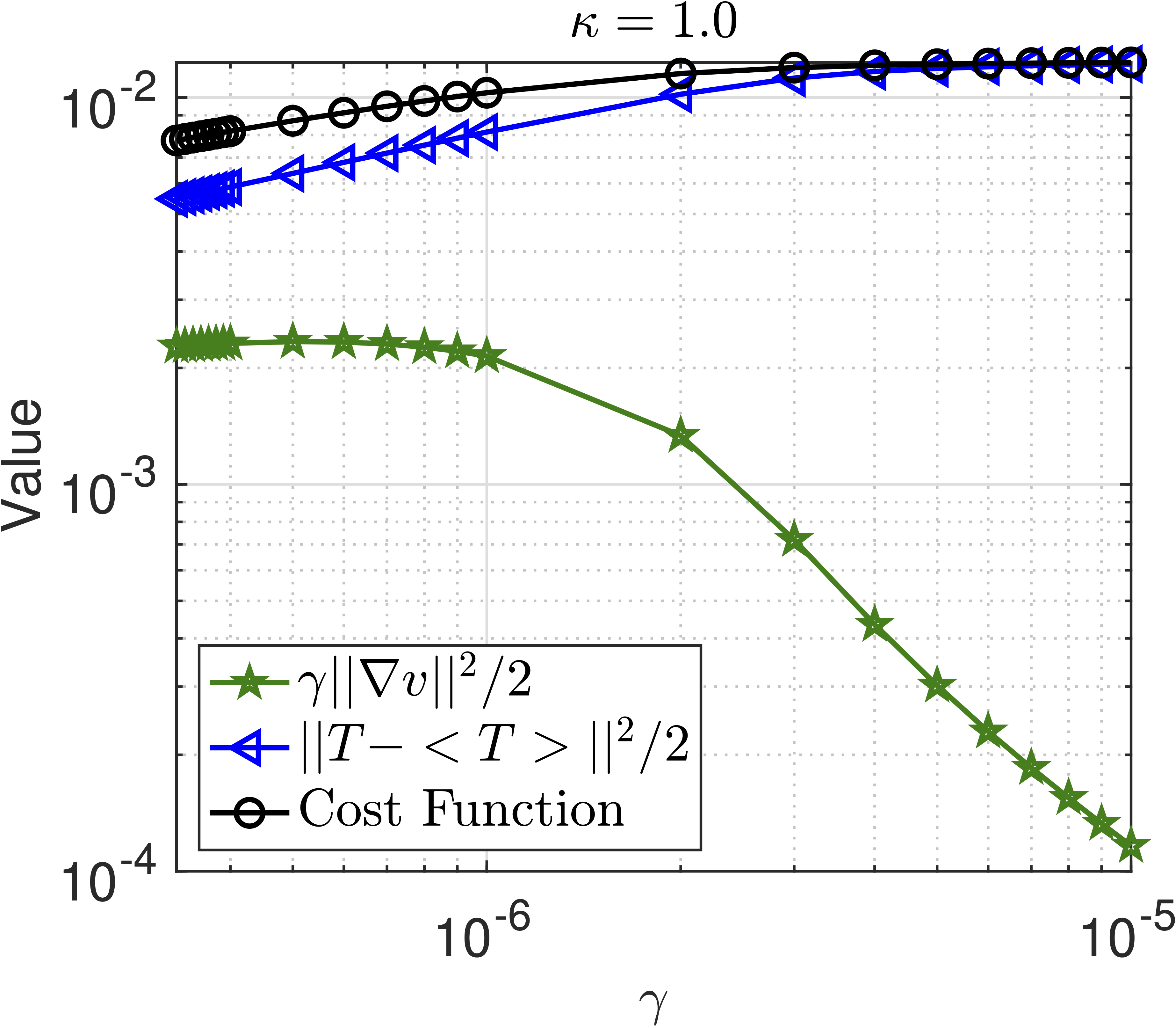}
&\includegraphics[width=.45\textwidth]{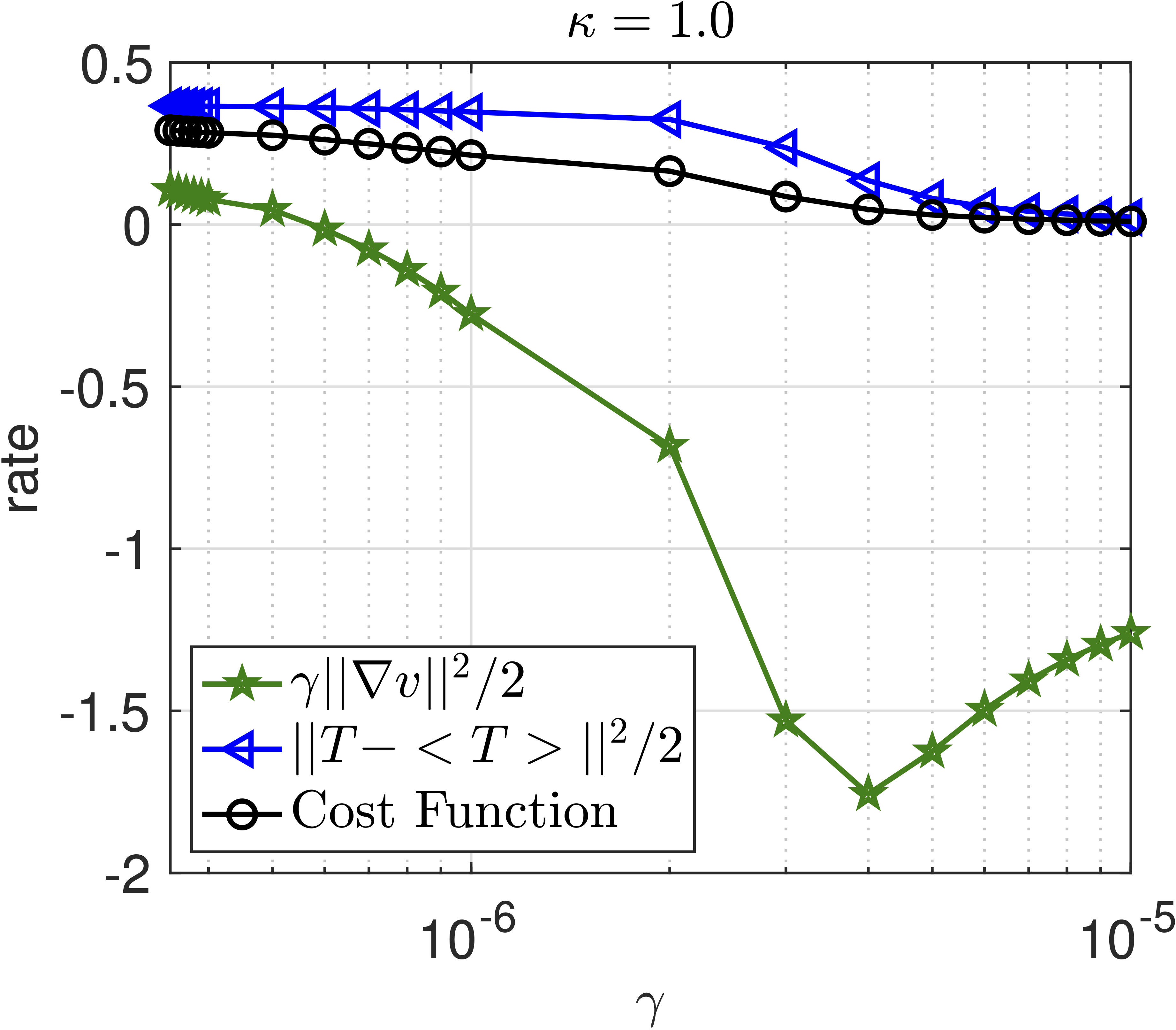}
\\
(a) & (b) 
\end{tabular}
\caption{Example~\ref{DivFree-Test3}: Illustration of results for $\kappa= 1.0$ (a). Plot of initial temperature $T_h^0$ (here $\|T_h^0-\langle T_h^0\rangle\|^2/2 = 1.24$E-2); (b) Plot of profiles in the cost functional with respect to $\gamma$; 
(c) Convergence  rates $r_J, r_T$ and $r_{\bv}$ computed by \eqref{r_J}--\eqref{r_v}}.\label{fig:divfree-3-Conv}
\end{figure}

\begin{example}\label{DivFree-Test4}
In the last  example, we consider that there is  a heat source as well as a heat sink and examine how the velocity behaves in  an environment with such heat distributions. Let 
\[
f(x,y) = 75\exp(-(9x-2)^2/4-(9y-2)^2/4) -75\exp(-(9x-4)^2/4-(9y-7)^2/4).
\]
\end{example}

\begin{figure}[H]
\centering
\begin{tabular}{cc}
\includegraphics[width=.35\textwidth]{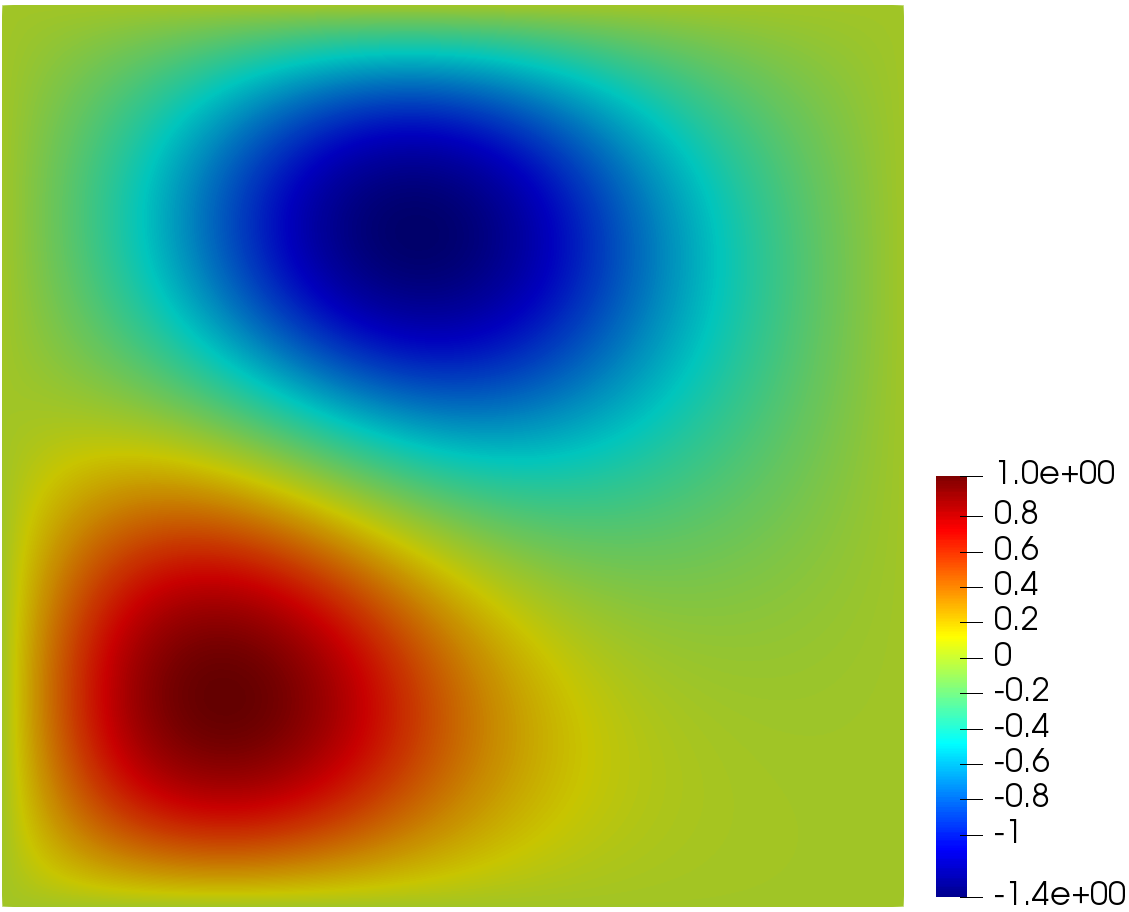}
&\includegraphics[width=.35\textwidth]{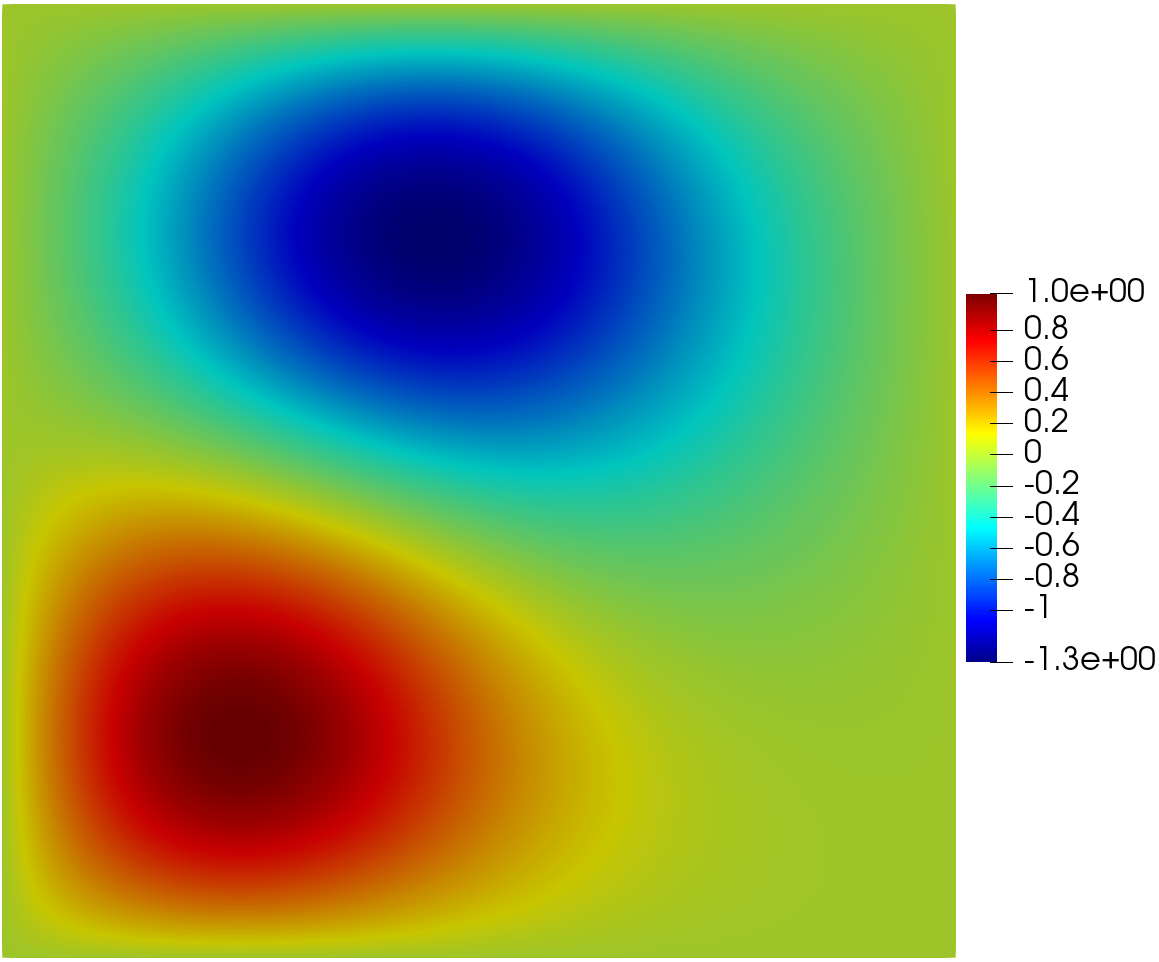}\\
(a) & (b)\\
\includegraphics[width=.35\textwidth]{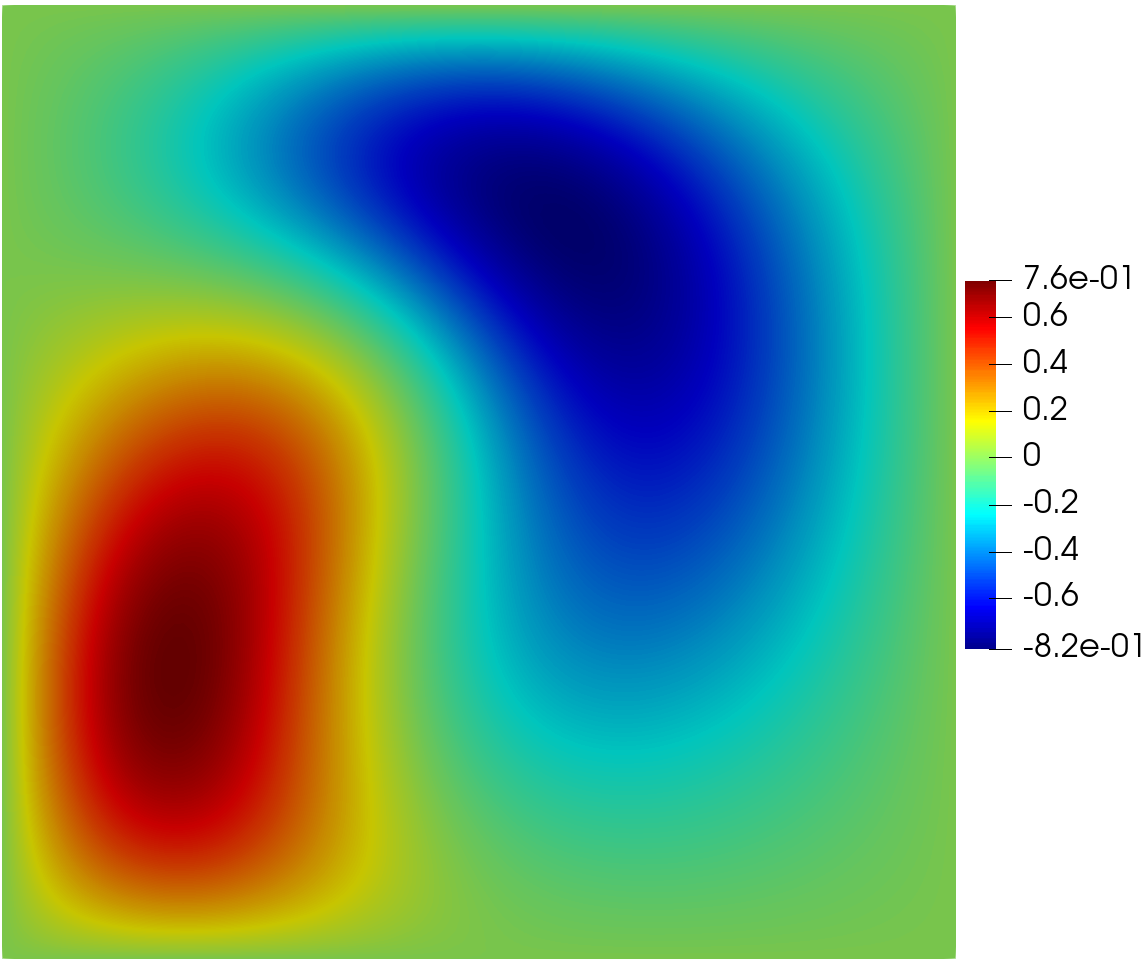}
&\includegraphics[width=.35\textwidth]{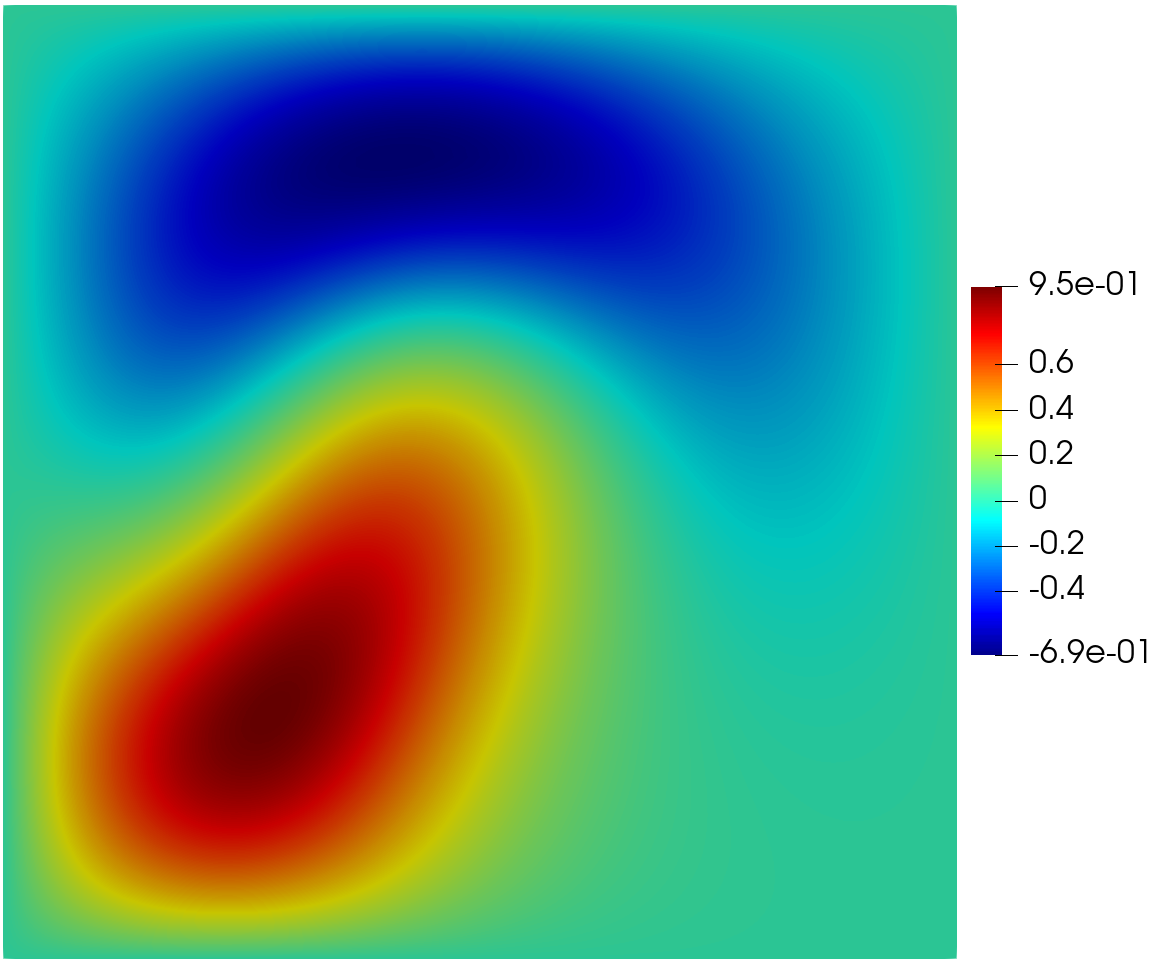}
\\
(c) & (d)\\
\end{tabular}
\caption{Example~\ref{DivFree-Test4}: Plots of optimal $T_h$  for $\kappa = 1.0$ of (a). Initial heat distribution $T_h^0$; and (b). $\gamma = 5$E-5; (c). $\gamma = 1$E-5; (d). $\gamma = 6.9$E-6 .}\label{fig:divfree-4-T}
\end{figure}

{The initial heat distribution corresponding to $\gamma = 1.0$ and $\bv=0$ is plotted in Fig.~\ref{fig:divfree-4-T}a.
As shown in this figure, the maximum and minimum values of of $T_h^0$ are $1.0$ and $-1.4$, respectively.
 The numerical optimal solutions for heat distribution $T_h$ are plotted in Fig.~\ref{fig:divfree-4-T}b-d for $\gamma =$5E-5, 1E-5, and 6.9E-6. 
We  observe that the upper and lower bounds of the initial temperate are reduced from   $T_{\min}=-1.4$  and $T_\text{max} = 1$  (shown in Fig.~\ref{fig:divfree-4-Conv}a) to $(\min T_h = -1.3,\max T_h = 1.0)$, $(\min T_h = -0.82,\max T_h = 0.76)$, and $(\min T_h = -0.69,\max T_h=0.95)$ with respective to $\gamma$ =5E-5, 1E-5, and 6.9E-6. 
Different to former examples, it is shown in Fig.~\ref{fig:divfree-4-V}-\ref{fig:divfree-4-V-Stream} that the velocity profiles differ significantly for these three values of $\gamma$. When $\gamma = $5E-5, as we can see in  Figs.~\ref{fig:divfree-4-V-Stream}-\ref{fig:divfree-4-V}a, the velocity field seems to steer the cold region toward the hot region and thus the minimum value is increased from $-1.4$ to $-1.3$,  however the maximum value remains at $1$. When $\gamma$ = 1E-5, as shown in Figs.~\ref{fig:divfree-4-V-Stream}-\ref{fig:divfree-4-V}a,  it seems that   the cold and the hot  regions are advected simultaneously,  and hence both the maximum and minimum values are tuned. 
However,  as one further reduces the value in $\gamma$ from 5E-5 to 6.9E-6,  the circulation  between the cold and hot regions  becomes disproportional, which results in a smaller  minimum value of the temperature but a higher maximum compared to the case with $\gamma$=5E-5. This may be due to the disproportional steering effect of the velocity field shown in  Figs.~\ref{fig:divfree-4-V}-\ref{fig:divfree-4-V-Stream}.}

\begin{figure}[H]
\centering
\begin{tabular}{ccc}
\includegraphics[width=.3\textwidth]{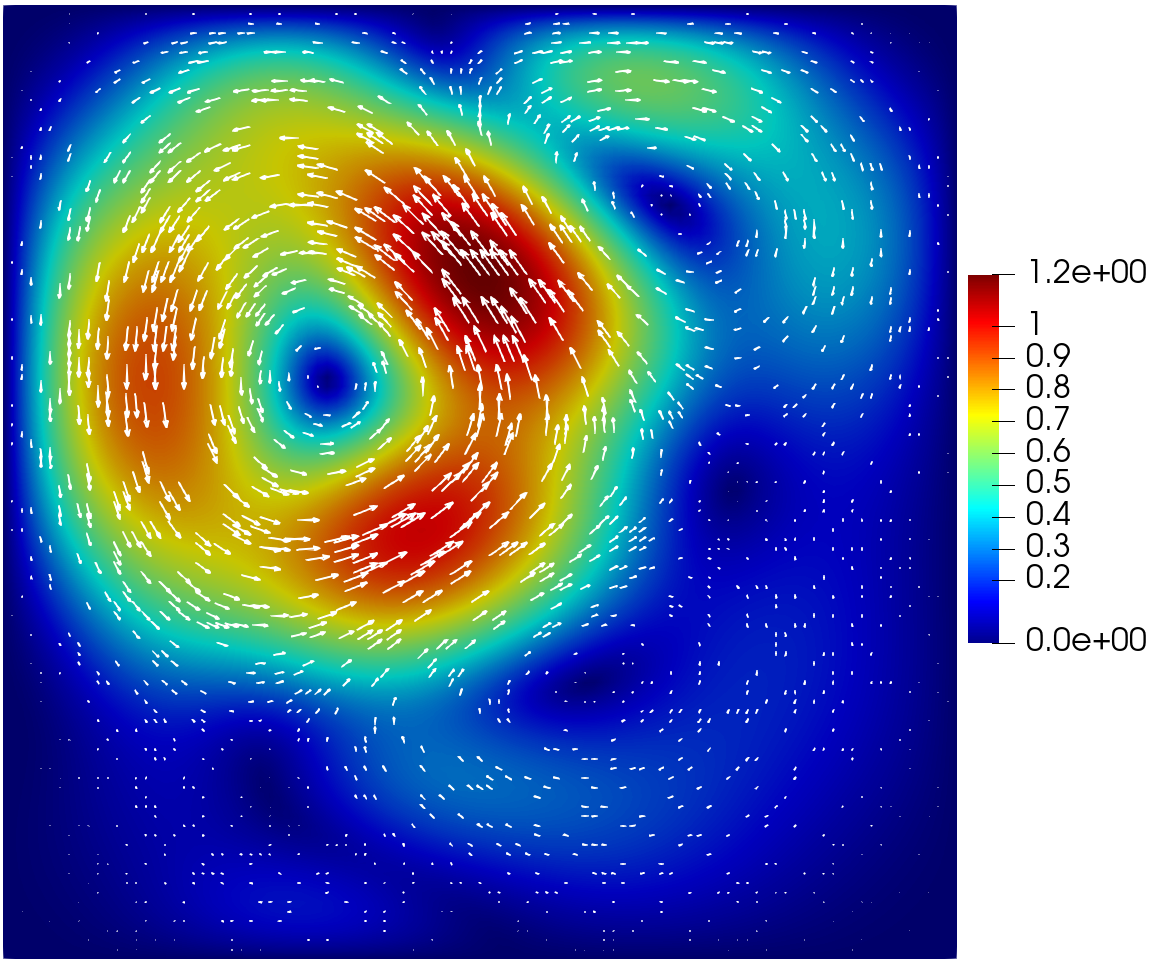}
&\includegraphics[width=.3\textwidth]{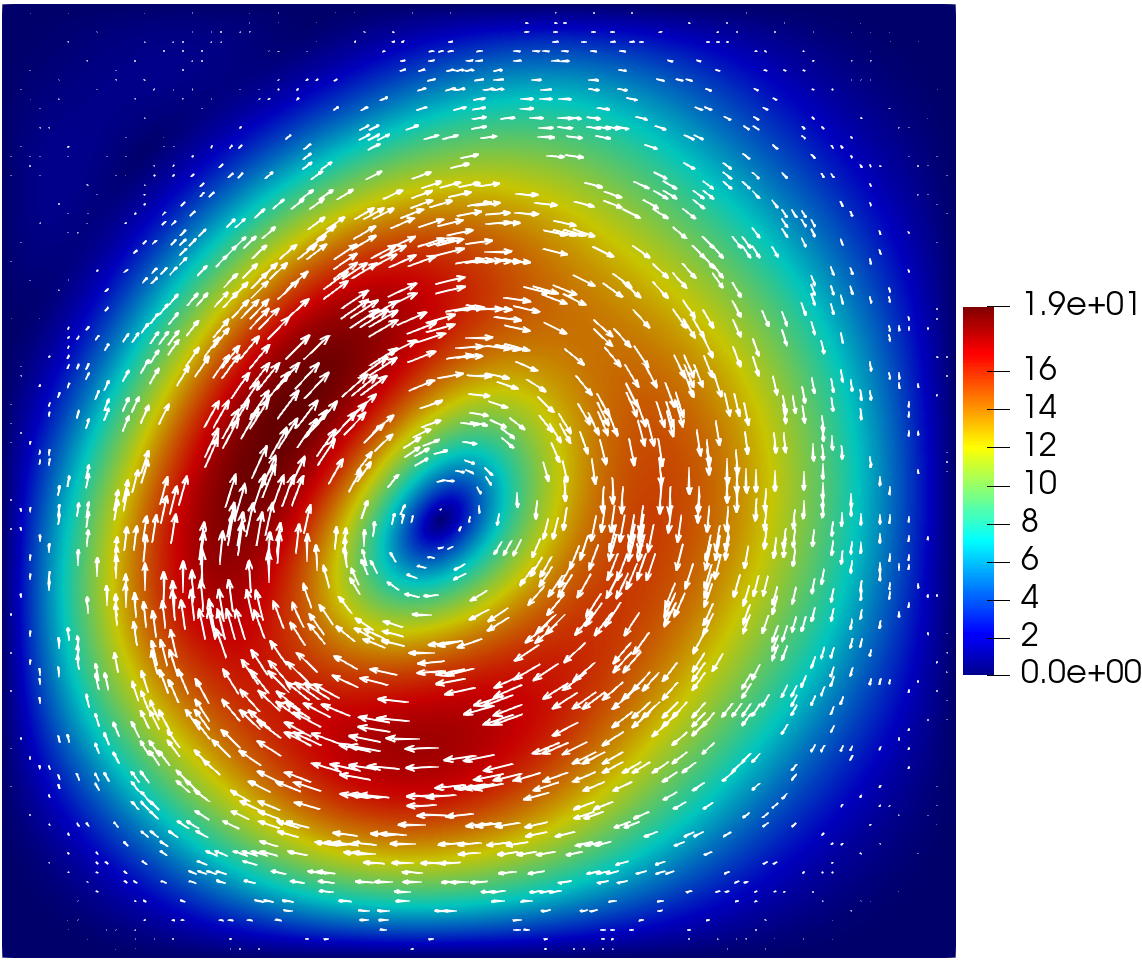}
&\includegraphics[width=.3\textwidth]{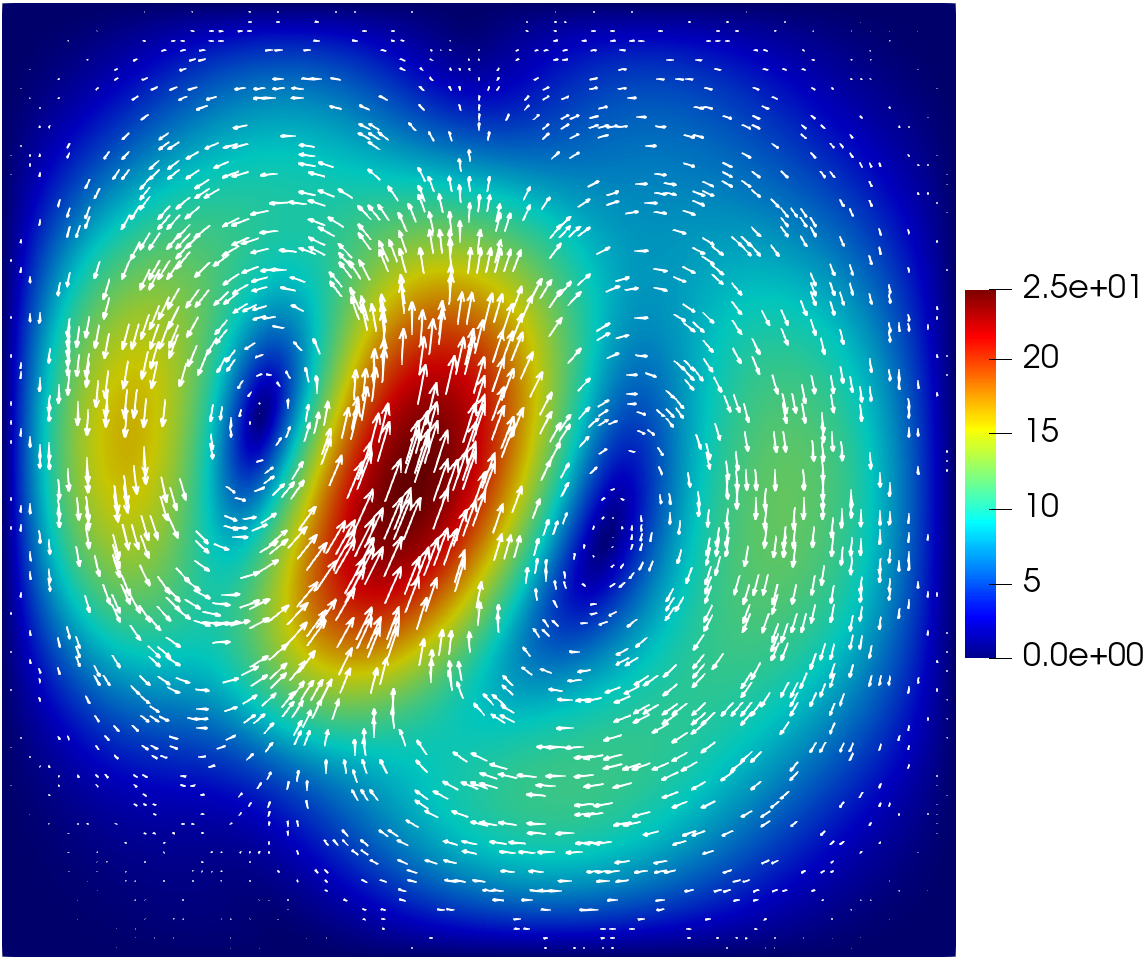}
\\
(a) & (b) &(c)
\end{tabular}
\caption{Example~\ref{DivFree-Test4}: Plots of optimal $\bv_h$  for $\kappa = 1.0$ and (a). $\gamma = 5$E-5; (b). $\gamma = 1$E-5; (c). $\gamma = 6.9$E-6. Here, the color illustrates the magnitude of velocity $\bv_h$ and the vector plots the field of $\bv_h$.}\label{fig:divfree-4-V}
\end{figure}

\begin{figure}[H]
\centering
\begin{tabular}{ccc}
\includegraphics[width=.3\textwidth]{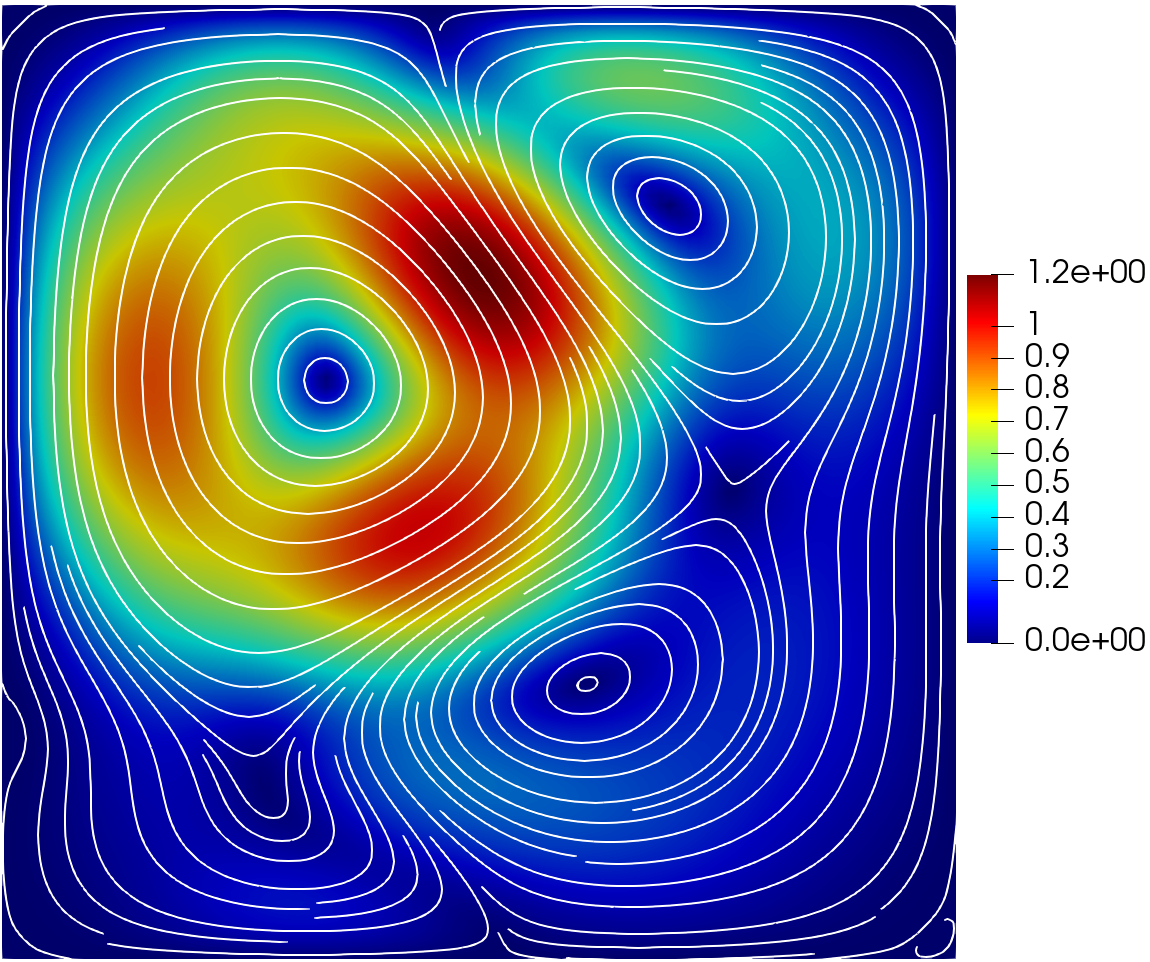}
&\includegraphics[width=.3\textwidth]{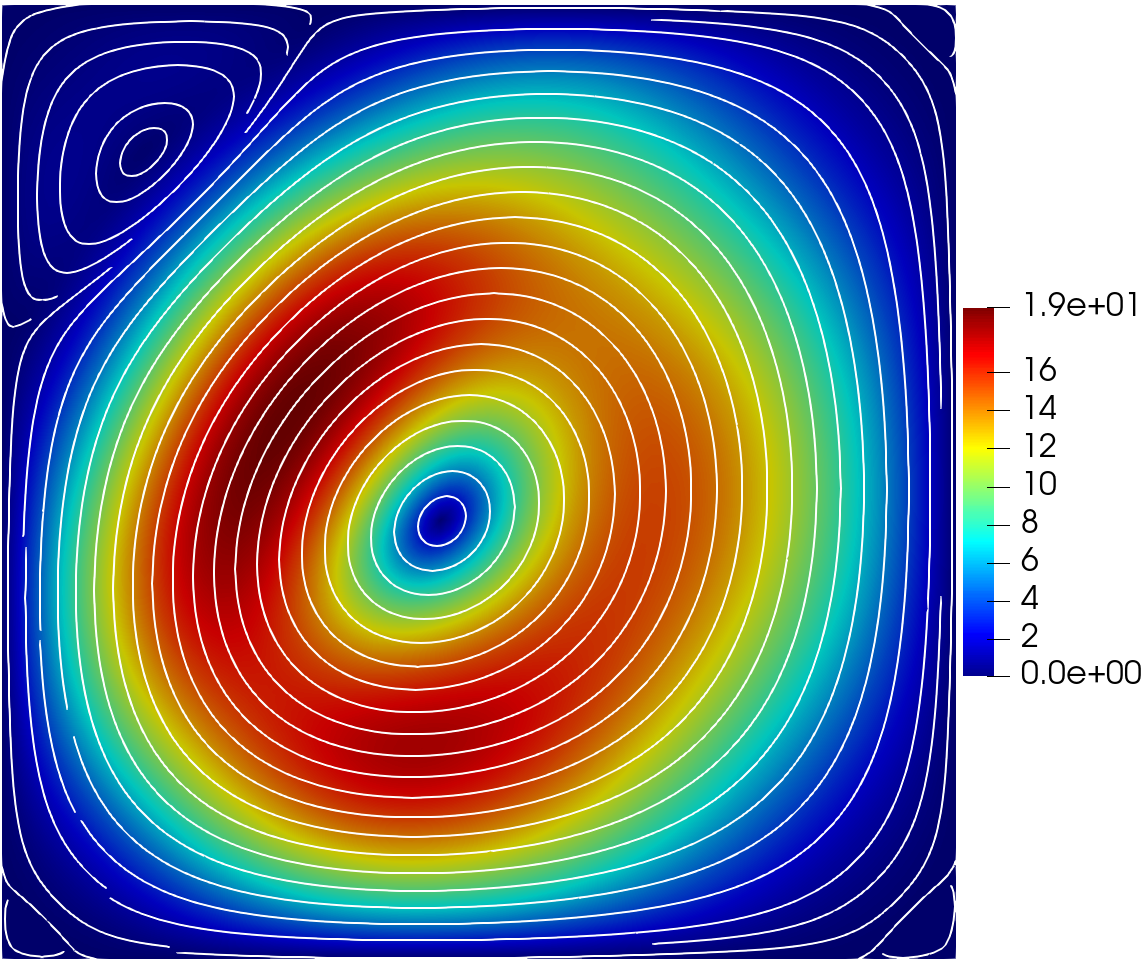}
&\includegraphics[width=.3\textwidth]{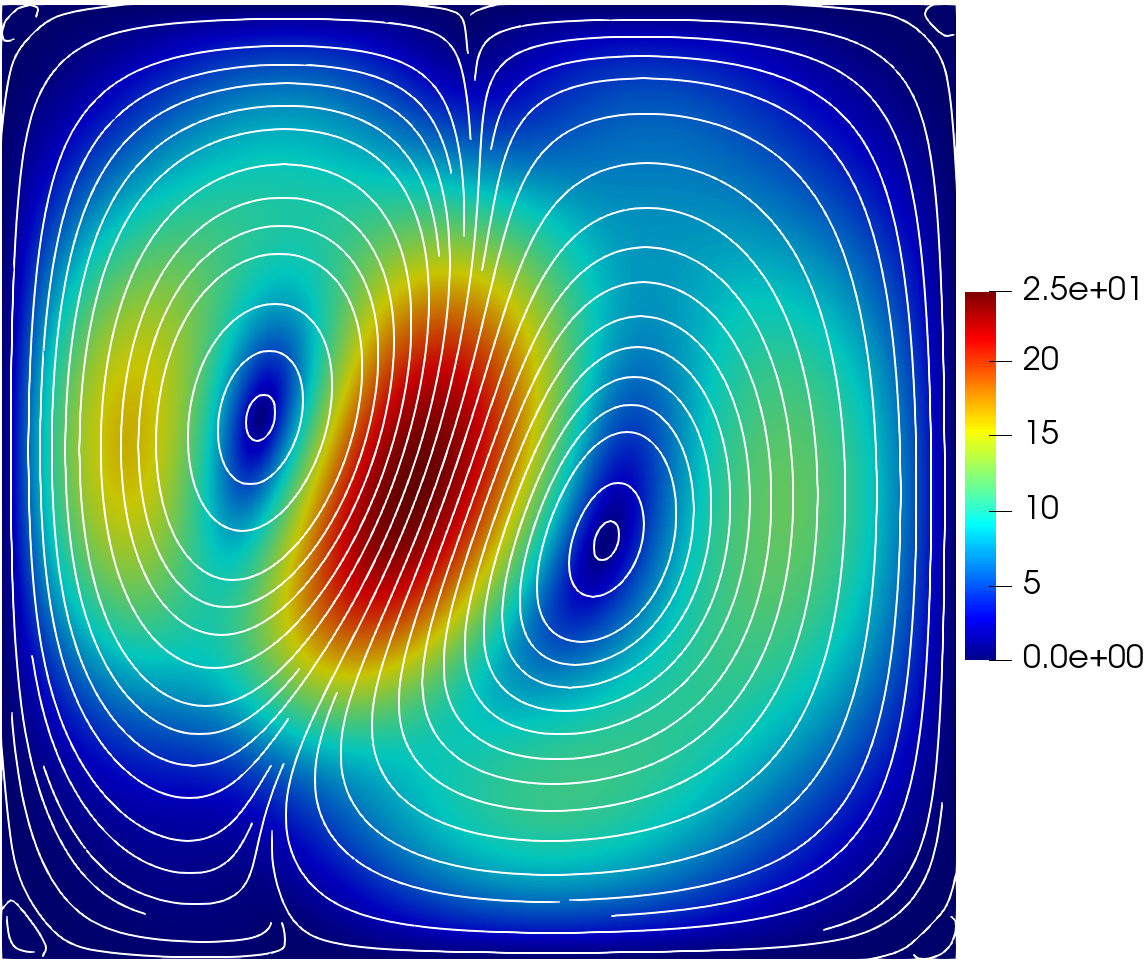}
\\
(a) & (b) &(c)
\end{tabular}
\caption{Example~\ref{DivFree-Test4}: Plots of optimal $\bv_h$  for $\kappa = 1.0$ and (a). $\gamma = 5$E-5; (b). $\gamma = 1$E-5; (c). $\gamma = 6.9$E-6. Here, the color illustrates the magnitude of velocity $\bv_h$ and the curve plots the streamline of $\bv_h$.}\label{fig:divfree-4-V-Stream}
\end{figure}

Lastly, the convergence results are plotted in Fig.~\ref{fig:divfree-4-Conv}. Similar results as in the previous tests can be observed from these two figures. For $\gamma=$6.9E-6,  the cost function $J_{\min} = $9.17E-2, which is $29\%$ smaller than the initial value (1.29E-1). 
In this case, we observe that the convergence rate $r_J$ gradually decreases from $0.31$ to almost $0$.

In summary, we have   conducted a wide range of tests  with  differential values of $\gamma$ for  different heat source distributions in this section.  The numerical results  demonstrate that  using   the optimal convection strategy,  the cost functional value can be reduced by 25\%-40\% depending upon the source terms,   when $\gamma\in$ [E-5, E-7]. 

\begin{figure}[H]
\centering
\begin{tabular}{cc}
\includegraphics[width=.45\textwidth]{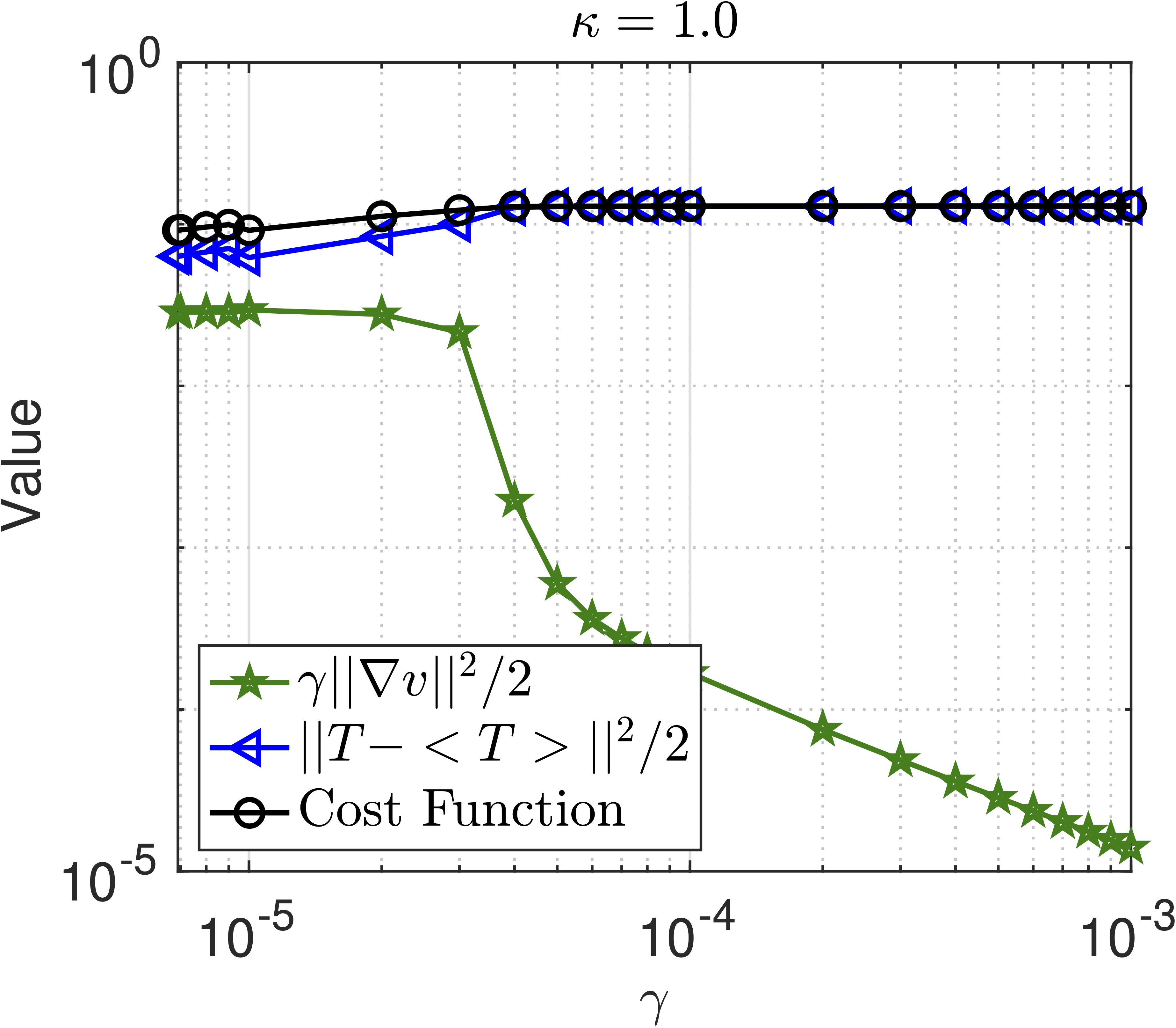}
&\includegraphics[width=.45\textwidth,height = .4\textwidth]{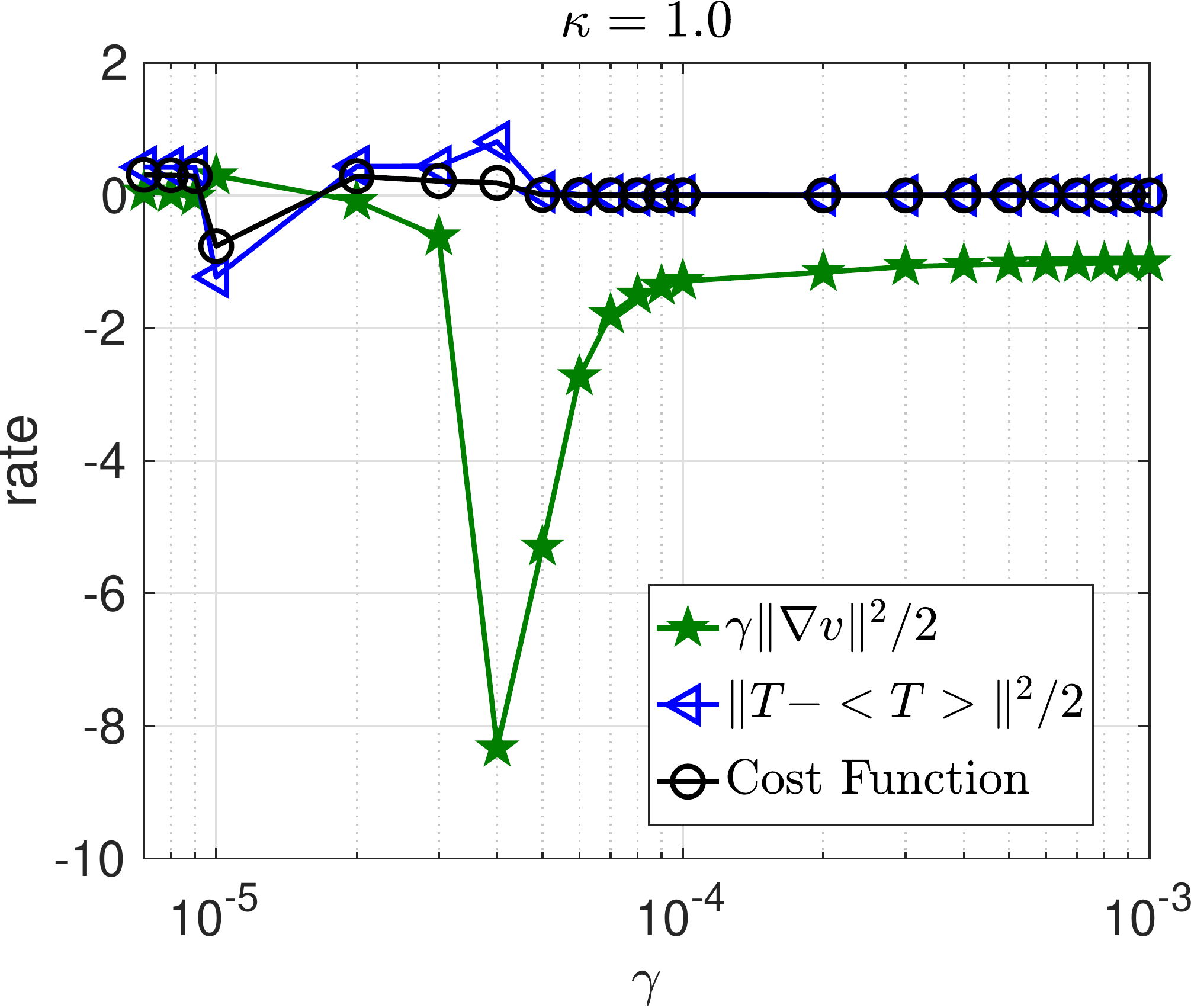}
\\
(a) & (b) 
\end{tabular}
\caption{Example~\ref{DivFree-Test4}: Illustration of results for $\kappa= 1.0$ 
(a). Plot of profiles in the cost functional with respect to $\gamma$ (here $\|T_h^0-\langle T_h^0\rangle\|^2/2 = 1.29$E-1); 
(b). Convergence  rates $r_J, r_T$ and $r_{\bv}$ computed by \eqref{r_J}--\eqref{r_v}}.\label{fig:divfree-4-Conv}
\end{figure}

\section{Conclusion}\label{Sect:Con}
{In this paper, we discussed   the optimal control design for convection-cooling via an incompressible velocity field. We presented  rigorous theoretical    analysis and  conditions  for solving and characterizing   the optimal controller.  Our numerical experiments  demonstrate  the effectiveness of  the cooling process through flow advection. Moreover, we observed that  to enhance heat transfer, small  values in $\gamma$ may be employed in  the convection-cooling design.  
 We shall continue to address the  convergence  issues of our current numerical schemes applied to such nonlinear optimality systems.  We shall also   extend our results to  study the non-stationary convection-cooling problems  for  more physical systems. Specifically, we shall consider to incorporate   the  flow dynamics into the velocity field, which will be controlled in real-time.  How to construct effective numerical schemes to tackle  such problems will be further investigated in our future work.  }

\section{Acknowledgments}
The authors sincerely thank the  anonymous referees for their valuable comments
and constructive suggestions. W. Hu was partially supported by the  NSF grant DMS-1813570.


\end{document}